\newcommand*\patchAmsMathEnvironmentForLineno[1]{
	\expandafter\let\csname old#1\expandafter\endcsname\csname #1\endcsname
	\expandafter\let\csname oldend#1\expandafter\endcsname\csname end#1\endcsname
	\renewenvironment{#1}
	{\linenomath\csname old#1\endcsname}
	{\csname oldend#1\endcsname\endlinenomath}}
\newcommand*\patchBothAmsMathEnvironmentsForLineno[1]{
	\patchAmsMathEnvironmentForLineno{#1}
	\patchAmsMathEnvironmentForLineno{#1*}}
\theoremstyle{definition}
\newtheorem{Definition}{Definition}[section]
\theoremstyle{plain}
\newtheorem{Theorem}{Theorem}[section]
\newtheorem{Corollary}{Corollary}[section]
\newtheorem{Lemma}{Lemma}[section]
\newtheorem{Proposition}{Proposition}[section]
\theoremstyle{remark}
\newtheorem{Remark}{\bf {\rm{{\bf Remark}}}} [section]
\newenvironment{theorem}{\begin{Theorem}$\!\!\!$}{\end{Theorem}}
\newenvironment{lemma}{\begin{Lemma}$\!\!\!$}{\end{Lemma}}
\newenvironment{proposition}{\begin{Proposition}$\!\!\!$}{\end{Proposition}}
\newenvironment{remark}{\begin{Remark}$\!\!\!$}{\end{Remark}}
\newenvironment{definition}{\begin{Definition}$\!\!\!$}{\end{Definition}}
\numberwithin{equation}{section}
\def\wh#1{\widehat{#1}}
\newcommand{\R}{\mathbb{R}}
\newcommand{\RN}{\mathbb{R}^N}
\newcommand{\HS}{\mathbb{R}^{N+1}_+}
\newcommand{\e}{\varepsilon}
\newcommand{\la}{\left\langle}
\newcommand{\ra}{\right\rangle}
\newcommand{\Hsloc}{H^s_{\mathrm{loc}}}
\newcommand{\Hloc}{H^1_{\mathrm{loc}}}
\newcommand{\dHs}{\dot{H}^s}
\DeclareMathOperator{\diver}{div}
\DeclareMathOperator{\supp}{supp}
\DeclareMathOperator{\dist}{dist}
\begin{document}
\title{On weak solutions to a fractional Hardy--H\'enon equation:\\ Part I: Nonexistence}
\author{
Shoichi Hasegawa\\
Department of Mathematics, 
School of Fundamental Science and Engineering,\\
Waseda University,\\
3-4-1 Okubo, Shinjuku-ku, Tokyo 169-8555, Japan
\\
\\
Norihisa Ikoma\\
Department of Mathematics,
Faculty of Science and Technology,\\
Keio University,\\
3-14-1 Hiyoshi, Kohoku-ku, Yokohama, Kanagawa 223-8522, Japan
\\
\\
Tatsuki Kawakami\\
Applied Mathematics and Informatics Course,
Faculty of Advanced Science and Technology,\\ 
Ryukoku University,\\
1-5 Yokotani, Seta Oe-cho, Otsu, Shiga 520-2194, Japan
}
\date{}
\maketitle

%\newpage
%%%%%%%%%%%%%%%%%%%%%%%%%%%%%%%%%%%%%%
%%%%%%%%%%%%%%%%%%%%%%%%%%%%%%%%%%%%%%
\begin{abstract}
This paper and \cite{HIK-20} treat the existence and nonexistence of stable (resp. outside stable) 
weak solutions 
to a fractional Hardy--H\'enon equation 
$(-\Delta)^s u = |x|^\ell |u|^{p-1} u$ in $\mathbb{R}^N$, 
where $0 < s < 1$, $\ell > -2s$, $p>1$, $N \geq 1$ and $N > 2s$. 
In this paper, the nonexistence part is proved for the Joseph--Lundgren subcritical case. 
\end{abstract}

\bigskip
\noindent \textbf{Keywords}: fractional Hardy--H\'enon equation, stable (stable outside a compact set) solutions, 
Liouville type theorem. 

\noindent \textbf{AMS Mathematics Subject Classification System 2020}: 35R11, 35J61, 35B33, 35B53, 35D30.
%%%%%%%%%%%%%%%%%%%%%%%%%%%%%%%%%%%%%
%%%%%%%%%%%%%%%%%%%%%%%%%%%%%%%%%%%%%
\section{Introduction}
\label{section:1}
%%%%%%%%%%%%%%%%%%%%%%%%%%%%%%%%%%%%%
%%%%%%%%%%%%%%%%%%%%%%%%%%%%%%%%%%%%%

In this paper and \cite{HIK-20}, we consider a fractional Hardy--H\'enon equation
\begin{equation}
\label{eq:1.1}
(-\Delta)^s u=|x|^\ell |u|^{p-1}u\qquad\mbox{in}\quad{\mathbb R}^N
\end{equation}
and throughout this paper, we always assume the following condition on $s,\ell,p,N$:
	\begin{equation}\label{eq:1.2}
		0 < s < 1, \quad \ell>-2s, \quad p>1,\quad N\ge1,\quad N>2s.
	\end{equation}
In \eqref{eq:1.1}, 
$(-\Delta)^s$ is the fractional Laplacian, which is defined for any $\varphi\in C^\infty_c (\mathbb R^N)$ by
	\[
		(-\Delta)^s\varphi(x)
		:= C_{N,s}\mbox{P.V.}\int_{\mathbb R^N}\frac{\varphi(x)-\varphi(y)}{|x-y|^{N+2s}}\,dy
		=C_{N,s}\lim_{\varepsilon\to0}\int_{|x-y|>\varepsilon}\frac{\varphi(x)-\varphi(y)}{|x-y|^{N+2s}}\,dy
\]
for $x\in \mathbb R^N$,  where P.V. stands for the Cauchy principal value integral, 
\begin{equation}
\label{eq:1.3}
C_{N,s}:=2^{2s}s(1-s)\pi^{-\frac{N}{2}}\frac{\Gamma(\frac{N+2s}{2})}{\Gamma(2-s)}
\end{equation}
and $\Gamma(z)$ is the gamma function.

	This paper and \cite{HIK-20} are motivated by previous work in \cite{Farina,DDG,DDW,H} and 
we shall study the existence and nonexistence of stable solutions to \eqref{eq:1.1}. 
Farina \cite{Farina} studied \eqref{eq:1.1} in the case $s=1$, $\ell = 0$ and $N \geq 2$ and he showed 
the existence ($ p \geq p_c(N) $) and nonexistence ($1 < p < p_c(N)$) of 
stable (resp. stable outside a compact set) solutions where the Joseph--Lundgren exponent $p_c(N)$ is defined by 
	\[
		p_c(N) := \left\{\begin{aligned}
			&\frac{ (N-2)^2 - 4N + 8 \sqrt{N-1} }{(N-2)(N-10)} 
			& &\text{if } N \geq 11,
			\\
			&\infty & &\text{if } 1 \leq N \leq 10.
		\end{aligned}\right.
	\]
Next, Dancer, Du and Guo \cite[Theorem 1.2]{DDG} and Wang and Ye \cite[Theorem 1.7]{WY-12} studied 
the case $\ell > -2 $ and $N \geq 2 $ and showed the existence and nonexistence of 
stable and finite Morse index solutions in $H^1_{\rm loc} (\RN) \cap L^\infty_{\rm loc}(\RN)$. 
We remark that in \cite{WY-12}, they treated the weaker class of solutions than 
those in $H^1_{\rm loc} (\RN) \cap L^\infty_{\rm loc} (\RN)$. The threshold on $p$ is given by 
	\[
		p_+(N,\ell) := \left\{\begin{aligned}
			& \frac{ (N-2)^2 - 2 (\ell + 2) (\ell + N) + 2\sqrt{ (\ell +2)^3 (\ell + 2 N - 2) } }
			{(N-2) (N-4 \ell - 10) } & &\text{if} \ N > 10 + 4 \ell ,
			\\
			& \infty & &\text{if} \ 2 \leq N \leq 10 + 4 \ell.
		\end{aligned}\right.
	\]

	On the other hand, the case $s=1/2$, $\ell = 0$ and $N \geq 2$ was treated 
in Chipot, Chleb\'{\i}k, Fila and Shafrir \cite{CCFS} as an extension problem 
and it is shown that there exists a positive radial solution to \eqref{eq:1.1} for 
$p \geq \frac{N+1}{N-1} = p_S(N,0)$ where $p_S(N,\ell)$ is defined in \eqref{eq:1.6}. 
Harada \cite{H} considered the same case $s=1/2$, $\ell = 0$ and $N \geq 2$, 
introduced the notion corresponding to the Joseph--Lundgren exponent $p_c(N)$ and 
proved the existence of a family of layered positive radial solutions 
when $p$ is the Joseph--Lundgren supercritical or critical. 
In \cite{H}, the subcritical case is also treated. 
D\'{a}vila, Dupaigne and Wei \cite{DDW} dealt with the case $\ell = 0$ and $0<s<1$, and 
proved the existence and nonexistence of stable (resp. stable outside a compact set) solutions of \eqref{eq:1.1}. 
We remark that in \cite{DDW}, they treated solutions 
$u \in C^{2\sigma} (\RN) \cap L^1( \RN , (1+|x|)^{-N-2s}dx )$ 
(the sign of the weight in \cite[Theorem 1.1]{DDW} might be a missprint)
 where $\sigma > s$ 
and 
	\[
		L^q(\RN , w(x)dx) := \Set{ u : \RN \to \R | \| u \|_{L^q(\RN , w(x)dx)} 
		:= \left( \int_{  \RN} |u(x)|^q w(x) \,dx \right)^{\frac{1}{q}} < \infty }.
	\]
However, in order to make their argument work, 
it seems appropriate to assume 
$ u \in C^{2\sigma} (\RN) \cap L^2 (\RN , (1+|x|)^{-N-2s}dx ) $. 
For this point, 
see Remark \ref{Remark:2.4} and \cite[Lemmata 2.1--2.4]{DDW}. 
Notice that $L^2(\RN, (1+|x|)^{-N-2s}dx ) \subset L^1(\RN , (1+|x|)^{-N-2s}dx)$ 
since $(1+|x|)^{-N-2s} \in L^1(\RN)$.

	We also refer to Li and Bao \cite{LB-19} for the study of positive solutions of \eqref{eq:1.1} 
with singularity at $x=0$ in the case $-2s < \ell \leq 0$ and $1 < p \leq p_S(N,\ell)$.

	The aim of this paper and \cite{HIK-20} is to extend the results of \cite{DDW,H} 
into the case $\ell \neq 0$ and established the result which is a fractional counterpart of \cite{DDG}. 
In this paper, we establish the nonexistence result. On the other hand, 
in \cite{HIK-20}, we will consider the existence result and study properties of solutions.

	After submitting this paper, we learned Barrios and Quaas \cite{BQ-20} and 
the references therein from Alexander Quaas. 
In \cite{BQ-20} and Dai and Qin \cite{DQ}, 
they studied the nonexistence of positive solution (and nonnegative nontrivial solutions) 
of \eqref{eq:1.1} for $\ell \in (-2s,\infty)$ and $0<p<p_S(N,\ell)$
On the other hand, Yang \cite{Ya-15} considered the existence of positive solution of \eqref{eq:1.1} 
via the minimizing problem for $p=p_S(N,\ell)$. 
Finally, Fazly--Wei \cite{FW-16} considered \eqref{eq:1.1} for the case $0<\ell$ and $1 < p \leq p_S(N,\ell)$. 
They proved the nonexistence of stable solutions for $1<p< p_S(N,\ell)$, 
and for $p=p_S(N,\ell)$, they obtained the same result under the finite energy condition for solutions. 
See also the comments after Theorem \ref{Theorem:1.1}.

	We first introduce the notation of solutions of \eqref{eq:1.1}. 
\begin{definition}
\label{Definition:1.1}
Suppose \eqref{eq:1.2}. We say that $u$ is a \emph{solution} of \eqref{eq:1.1} 
if $u$ satisfies $u\in \Hsloc(\mathbb R^N)\cap L^\infty_{\rm loc}(\mathbb R^N) \cap L^1(\RN, (1+|x|)^{-N-2s}dx) $ 
and 
\begin{equation}
\label{eq:1.4}
\langle u,\varphi\rangle_{\dot H^s(\mathbb R^N)}
=\int_{\mathbb R^N}|x|^\ell |u|^{p-1}u\varphi\, dx
\quad\mbox{for all}\,\,\, \varphi\in C^\infty_c(\mathbb R^N)
\end{equation}
where
\begin{equation}
\label{eq:1.5}
\langle u,\varphi\rangle_{\dot H^s(\mathbb R^N)}
:=\frac{C_{N,s}}{2}\int_{\mathbb R^N\times\mathbb R^N}\frac{(u(x)-u(y))(\varphi(x)-\varphi(y))}{|x-y|^{N+2s}}\,dx\,dy
\end{equation}
and $C_{N,s}$ is the constant defined by \eqref{eq:1.3}. 
Remark that 
$|x|^\ell |u(x)|^{p-1} u(x) \in L^1_{\rm loc} (\RN)$ 
due to $u \in L^\infty_{\rm loc}(\RN)$ 
and \eqref{eq:1.2}. For $\Omega \subset \RN$, we also set 
	\[
		\| u \|_{\dHs (\Omega) } 
		:= 
		\left( \frac{C_{N,s}}{2} \int_{ \Omega \times \Omega } 
		\frac{ |u(x) - u(y)|^2 }{|x-y|^{N+2s}} \, dx \, dy 
		 \right)^{ \frac{1}{2} }. 
	\]
\end{definition} 
\begin{remark}
\label{Remark:1.1}
In Section \ref{section:2}, we will see that 
	\begin{enumerate}
		\item 
		$\la u, \varphi \ra_{\dHs(\RN)} \in \R $ 
		for any $u \in \Hsloc(\RN)  \cap L^1(\RN, (1+|x|)^{-N-2s}dx) $ 
		and $\varphi \in C^\infty_c (\RN)$. 
		\item
		we may replace $C^\infty_{c}(\RN)$ in Definition \ref{Definition:1.1} by $C^1_c(\RN)$.
		\item
		our solution $u$ satisfies \eqref{eq:1.1} in the distribution sense, that is, 
			\[
				\int_{\RN} u \left( -\Delta \right)^s\varphi dx 
				= \la u , \varphi \ra_{\dHs(\RN)} = \int_{\RN} |x|^\ell |u|^{p-1} u \varphi \, dx 
				\quad \text{for every $\varphi \in C^\infty_c(\RN)$}. 
			\]
	\end{enumerate}
For the details, see Lemma \ref{Lemma:2.1}. 
\end{remark}

	In order to state our main result and for later use, 
following \cite{DDW}, we introduce some notation. We put 
	\[
		\begin{aligned}
			B_R &:= \Set{ x \in \RN | \, |x| <R }, & S_R &:= \partial B_R = \set{x \in \RN | \, |x| =R},
			\\
			B_R^+ &:= \Set{ (x,t) \in \HS | \, \left| (x,t) \right|<R }, &
			S_R^+ &:= \partial B_R^+ = \Set{ (x,t) \in \HS | \, \left| (x,t) \right| = R },
		\end{aligned}
	\]
and $B_R^c:=\mathbb R^N\setminus B_R$. 
For $N\ge1$, $s\in(0,1)$ and $\ell>-2s$, we write
	\begin{equation}\label{eq:1.6}
		p_S(N,\ell) := \frac{N+2s+2\ell}{N-2s} \in (1,\infty).
	\end{equation}
Note that $p_S(N,0)$ corresponds to the critical exponent of 
the fractional Sobolev inequality $H^s(\RN) \subset L^{p_S(N,0)} (\RN)$. 
Next, for $\alpha\in[0,(N-2s)/2)$, we set 
\begin{equation}
\label{eq:1.7}
\lambda(\alpha)
:=2^{2s}\frac{\Gamma(\frac{N+2s+2\alpha}{4})\,\Gamma(\frac{N+2s-2\alpha}{4})}
{\Gamma(\frac{N-2s-2\alpha}{4})\,\Gamma(\frac{N-2s+2\alpha}{4})}.
\end{equation}
It is known that 
	\begin{equation}\label{eq:1.8}
		\text{the function $\alpha\mapsto \lambda(\alpha)$ is strictly decreasing}
	\end{equation}
and $\lambda(\alpha)\to0$ as $\alpha \nearrow (N-2s)/2$ 
(see, e.g. Frank, Lieb and Seiringer \cite[Lemma 3.2]{FLS-08} and 
D\'avila, Dupaigne and Montenegro \cite[Appendix]{DDM}).

	\begin{remark}
Let $v_\alpha(x) := |x|^{ - \left( \frac{N-2s}{2} - \alpha \right) }$ for $ 0 \leq \alpha < \frac{N-2s}{2}$. 
According to Fall \cite[Lemma 4.1]{Fall}, the constant $\lambda(\alpha)$ appears in the equation  
	\[
		(-\Delta)^{s} v_\alpha = \lambda(\alpha) |x|^{-2s} v_\alpha \quad 
		\text{in} \ \RN\setminus\{0\}.
	\]
	\end{remark}

	Finally, we introduce the notation of stable, stable outside a compact set and Morse index equal to $K$: 
	\begin{definition}\label{Definition:1.2}
		Let $u \in \Hsloc (\RN) \cap L^\infty_{\rm loc}(\RN) \cap L^1(\RN, (1+|x|)^{-N-2s}dx) $ 
		be a solution of \eqref{eq:1.1}. 
		We say that $u$ is \emph{stable} if $u$ satisfies 
			\begin{equation}\label{eq:1.9}
				p \int_{\RN} |x|^\ell |u|^{p-1} \varphi^2 \, dx 
				\leq \| \varphi \|_{\dHs(\RN)}^2 \quad \text{for every $\varphi \in C^\infty_c(\RN )$}. 
			\end{equation}
		On the other hand, $u$ is called \emph{stable outside a compact set} if 
		there exists an $R_0 \geq 0$ such that 
			\begin{equation}
			\label{eq:1.10}
			p\int_{\mathbb R^N}|x|^\ell|u|^{p-1}\varphi^2\,dx\le \|\varphi\|^2_{\dot H^s(\mathbb R^N)}
			\quad \text{for every $\varphi \in C^\infty_c(\RN \setminus \overline{B_{R_0}}  )$}.
			\end{equation}
		Finally, a solution $u$ is said to have a 
		\emph{Morse index equal to $K$} provided $K$ is the maximal dimension of subspaces 
		$Z \subset C^\infty_c ( \RN)$ with 
			\[
				\| \varphi \|_{\dHs(\RN)}^2 -  p \int_{  \RN} |x|^\ell |u|^{p-1} \varphi^2 \, dx < 0
				\quad \text{for each $\varphi \in Z \setminus \{0\}$}.
			\]
	\end{definition}

	\begin{remark} \label{Remark:1.3} 
		\begin{enumerate}
			\item 
				By a density argument, in Definition \ref{Definition:1.2}, we may replace $C_c^\infty(\RN)$ 
				and $C^\infty_c(\RN \setminus \overline{ B_{R_0}} )$ 
				by $C_c^1(\RN)$ and $C^1_c( \RN \setminus \overline{ B_{R_0} }   )$. 
				In addition, \eqref{eq:1.10} remains true 
				for $\varphi \in C^1_c(\RN)$ with $\varphi \equiv 0$ on $B_{R_0}$. 
			\item 
				As in \cite[Remark 1]{Farina}, we may check that if a solution $u$ has a finite Morse index, 
				then $u$ is stable outside a compact set. 
		\end{enumerate}
	\end{remark}

	The following is the main result of this paper:

\begin{theorem}
\label{Theorem:1.1}
Suppose \eqref{eq:1.2} and let 
$u\in \Hsloc(\mathbb R^N)\cap L^\infty_{\rm loc}(\mathbb R^N) \cap L^2(\RN, (1+|x|)^{-N-2s}dx) $ 
be a solution of \eqref{eq:1.1} which is stable outside a compact set. 
\begin{itemize}
\item[\rm(i)]
If $1<p<p_S(N,\ell)$, 
then $u\equiv0$.
\item[\rm(ii)]
If $p=p_S(N,\ell)$, then $u$ has finite energy, that is
	\[
\|u\|^2_{\dot H^s(\mathbb R^N)}=\int_{\mathbb R^N}|x|^\ell|u|^{p+1}\,dx<+\infty.
	\]
Furthermore, if $u$ is stable, then $u\equiv 0$.
\item[\rm(iii)]
If $p_S(N,\ell)<p$ with
\begin{equation}
\label{eq:1.11}
p\,\lambda\bigg(\frac{N-2s}{2}-\frac{2s+\ell}{p-1}\bigg)>\lambda(0),
\end{equation}
then $u\equiv0$,
where $\lambda(\alpha)$ is the function given by \eqref{eq:1.7}.
\end{itemize}
\end{theorem}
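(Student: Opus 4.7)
The plan is to adapt the Farina scheme, as extended to the fractional setting by Dávila--Dupaigne--Wei \cite{DDW} and to the local Hardy--Hénon setting by Dancer--Du--Guo \cite{DDG}, to accommodate both the fractional operator $(-\Delta)^s$ and the weight $|x|^\ell$ simultaneously. Fix a radial cut-off $\eta_R\in C^1_c(B_{2R}\setminus\overline{B_R})$ with $R>R_0$ and, for a parameter $\gamma\geq 1$ to be chosen, combine two identities: test equation \eqref{eq:1.4} against $|u|^{2\gamma-1}\mathrm{sgn}(u)\,\eta_R^2$, and apply the stability inequality \eqref{eq:1.10} with $\varphi=|u|^{\gamma-1}u\,\eta_R$. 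The algebraic bridge is the Stroock--Varopoulos pointwise bound
\[
(a-b)\bigl(|a|^{2\gamma-1}\mathrm{sgn}(a)-|b|^{2\gamma-1}\mathrm{sgn}(b)\bigr)\;\geq\;\tfrac{2\gamma-1}{\gamma^2}\bigl(|a|^{\gamma-1}a-|b|^{\gamma-1}b\bigr)^2,
\]
integrated against $C_{N,s}|x-y|^{-N-2s}$ and paired with standard nonlocal commutator estimates for the cut-off; after absorbing the algebraic terms one obtains the master inequality
\[
\Bigl(p\,\tfrac{2\gamma-1}{\gamma^2}-1\Bigr)\int_{\RN}|x|^\ell|u|^{p+2\gamma-1}\eta_R^2\,dx\;\leq\;C\,\mathcal{E}(u,\eta_R),
\]
valid whenever $\gamma$ lies in the Farina window $(\gamma_-(p),\gamma_+(p))$ with $\gamma_\pm(p):=p\pm\sqrt{p(p-1)}$. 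Here the remainder $\mathcal{E}(u,\eta_R)$ is bilinear in squared differences of $\eta_R$ and is localised in the annulus $\{R\leq|x|\leq 2R\}$; splitting its nonlocal kernel into near- and far-field pieces and absorbing the far-field tail by $u\in L^2(\RN,(1+|x|)^{-N-2s}dx)$ yields $\mathcal{E}(u,\eta_R)\leq C R^{-2s}\!\int_{R\leq|x|\leq 2R}|u|^{2\gamma}\,dx + C R^{-N-2s}$.

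Next, Hölder's inequality on the annular integral (with the weight split $|u|^{2\gamma}=(|x|^\ell|u|^{p+2\gamma-1})^{2\gamma/(p+2\gamma-1)}|x|^{-2\gamma\ell/(p+2\gamma-1)}$) re-expresses the right-hand side in terms of $I_\gamma(R):=\int|x|^\ell|u|^{p+2\gamma-1}\eta_R^2\,dx$ itself, producing the self-improving bound $I_\gamma(R)\leq C R^{\theta(\gamma)}$ with
\[
\theta(\gamma):=(N-2s)-\frac{2\gamma(2s+\ell)}{p-1},
\]
so that $\theta(\gamma)<0$ iff $\gamma>(N-2s)(p-1)/(2(2s+\ell))=:\gamma_{\mathrm{sc}}(p)$. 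In case \emph{(i)}, $\gamma=1$ always lies in the Farina window, and $\theta(1)<0$ is equivalent to $p<p_S(N,\ell)$; sending $R\to\infty$ and invoking Fatou forces $u\equiv 0$. In case \emph{(ii)}, the same choice gives $\theta(1)=0$, hence $I_1(R)$ uniformly bounded in $R$, proving finite energy; a Pohozaev identity derived by testing the Caffarelli--Silvestre extension of $u$ against $(x,t)\cdot\nabla U$ on a large half-ball, combined with full stability, then forces $u\equiv 0$. In case \emph{(iii)}, one needs some $\gamma\in(\gamma_-(p),\gamma_+(p))$ with $\gamma>\gamma_{\mathrm{sc}}(p)$; the identification of $\lambda(\alpha)$ as the sharp constant in the fractional Hardy inequality relative to the Emden--Fowler singular profile $|x|^{-(2s+\ell)/(p-1)}$ shows that this intersection is nonempty precisely under condition \eqref{eq:1.11}, i.e., exactly when the linearised operator at that profile is unstable.

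The step I expect to be most delicate is the passage from the pointwise Stroock--Varopoulos inequality to the global estimate, together with the verification that $\mathcal{E}(u,\eta_R)$ scales genuinely like $R^{-2s}$ in the Hardy--Hénon setting without an unfavourable correction from the weight $|x|^\ell$; the decoupling should work because $|x|^\ell$ enters only through the integrand and not through the Dirichlet form, but this requires care at the interface between the algebraic commutator and the weighted Hölder step. A second delicate point, specific to case \emph{(iii)}, is the precise translation between the Farina window $(\gamma_-(p),\gamma_+(p))$, the scaling threshold $\gamma_{\mathrm{sc}}(p)$, and the Joseph--Lundgren condition \eqref{eq:1.11}; the bridge is the sharp fractional Hardy inequality with constant $\lambda(0)$, together with the fact that the linearised equation at the Emden--Fowler profile is (un)stable exactly as $p\lambda((N-2s)/2-(2s+\ell)/(p-1))$ crosses $\lambda(0)$.
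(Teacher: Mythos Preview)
Your scheme is a plausible alternative for cases (i)--(ii), though the sketch is loose in places: with $\eta_R$ supported in the annulus $B_{2R}\setminus\overline{B_R}$, the bound $I_1(R)\le CR^{\theta(1)}$ only controls annular tails, and even with the customary cut-off ($\eta_R=0$ on $B_{R_0}$, $\eta_R=1$ on $B_R\setminus B_{2R_0}$) the Farina estimate delivers finite energy but not $u\equiv0$ directly---you still need the global Pohozaev identity in case (i), not just in (ii). The paper takes a somewhat different path here: it first proves finite energy from what is essentially your $\gamma=1$ step (Lemmata~\ref{Lemma:2.6}--\ref{Lemma:2.7}), then establishes a local Pohozaev identity for the Caffarelli--Silvestre extension (Proposition~\ref{Proposition:2.2}), and passes to the limit $R\to\infty$ along a good sequence to obtain the global identity, which combined with the Nehari relation forces $u\equiv0$ when $p<p_S(N,\ell)$.

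The real gap is in case (iii). Your assertion that the Farina window $(\gamma_-(p),\gamma_+(p))$ meets $\{\gamma>\gamma_{\mathrm{sc}}(p)\}$ \emph{precisely} under \eqref{eq:1.11} is correct for $s=1$---there $\lambda(\alpha)=\bigl(\tfrac{N-2}{2}\bigr)^2-\alpha^2$ is quadratic and both conditions reduce to the same polynomial inequality---but it fails for $0<s<1$: the Farina condition $\gamma_+(p)>\gamma_{\mathrm{sc}}(p)$ is algebraic in $N,s,\ell,p$, whereas \eqref{eq:1.11} is transcendental through the Gamma functions in \eqref{eq:1.7}. The Stroock--Varopoulos constant $\tfrac{2\gamma-1}{\gamma^2}$ is a pointwise datum that carries no information about the fractional Hardy spectrum, so there is no mechanism by which $\lambda(\alpha)$ can emerge from your iteration. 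This is exactly why \cite{DDW}, and the present paper, abandon the Farina iteration in the supercritical fractional regime. Instead the paper proves a monotonicity formula for the rescaled energy $E(U;\lambda)$ of the extension (Lemma~\ref{Lemma:4.2}), performs a blow-down $V_\lambda(X)=\lambda^{\frac{2s+\ell}{p-1}}U(\lambda X)$ whose limit $U_\infty$ is homogeneous of degree $-\tfrac{2s+\ell}{p-1}$, and then invokes a Liouville theorem for such homogeneous stable extensions (Lemma~\ref{Lemma:4.3}): reducing to a nonlinear problem on the half-sphere $S^+_1$ and comparing with the explicit profiles $\phi_\alpha$ coming from $|x|^{-(\frac{N-2s}{2}-\alpha)}$ yields $p\,\lambda(\widetilde\alpha)\le\lambda(0)$ with $\widetilde\alpha=\tfrac{N-2s}{2}-\tfrac{2s+\ell}{p-1}$, contradicting \eqref{eq:1.11} unless $U_\infty\equiv0$. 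One then shows $E(U;\cdot)\equiv0$, so $U$ itself is homogeneous, hence zero by the same lemma. The appearance of $\lambda(\alpha)$ is tied to the Neumann datum of $\phi_\alpha$ on $\partial S^+_1$, a step with no counterpart in the Farina scheme; your ``bridge'' via the sharp fractional Hardy inequality is not a computation but a hope, and it does not hold.
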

\bigskip

As mentioned in the above, it might be necessary to suppose $u \in L^2(\RN, (1+|x|)^{-N-2s} dx)$ in \cite{DDW} 
and also in \cite{FW-16} since a similar argument was used in \cite{FW-16}. 
Taking this point into consideration, we succeed to extend the nonexistence part of \cite{DDW} and \cite{FW-16}
into the case $\ell \in (-2s, 0)$. 
Furthermore, Theorem \ref{Theorem:1.1} may be regarded as a fractional version of a part of 
\cite[Theorem 1.2]{DDG}. 
We remark that we deal with weak solutions and in \cite{DDM,FW-16}, classical solutions were studied. 
On the other hand, when $p_S(N,\ell) < p$ holds and \eqref{eq:1.11} fails to hold, 
then we will show the existence of stable solutions in \cite{HIK-20} and observe 
the properties of those solutions. 
By Remark \ref{Remark:1.3}, Theorem \ref{Theorem:1.1} asserts also that 
there is no solution of \eqref{eq:1.1} with finite Morse index when $1<p<p_S(N,\ell)$ or 
$p_S(N,\ell) < p $ with \eqref{eq:1.11}. 
When $p=p_S(N,\ell)$, we find that any solution of $u$ of \eqref{eq:1.1} with finite Morse index 
satisfies $u \in \dHs (\RN) \cap L^{p+1} (\RN , |x|^{\ell} dx )$.

	The proof of Theorem \ref{Theorem:1.1} is basically similar to \cite{DDW}. 
However, in \cite{DDW}, they use the fact that solutions are of class $C^1$ or smooth, 
for instance, see \cite[the proofs of Theorem 1.1 for $1 < p \leq p_S(n)$ and Theorem 1.4, 
and at the end of proof of Theorem 1.1 for $p_S(n) < p$]{DDW}. 
On the other hand, \eqref{eq:1.1} contains the term $|x|^\ell$ and 
especially 
in the case $ \ell < 0$, solutions of \eqref{eq:1.1} are not of class $C^1$ at the origin. 
Therefore, we need some modifications in the argument. 
In this paper, we first prove a local Pohozaev type identity as in Fall and Felli \cite{FF} 
and exploit it to show Theorem \ref{Theorem:1.1} 
for $1 < p \leq p_S(N,\ell)$. 
In addition, to show the motonicity formula (Lemma \ref{Lemma:4.2}), 
we use the idea in \cite[section 3]{FF} where they studied the Almgren type frequency.

	This paper is organized as follows. 
In subsection \ref{section:2.1}, we investigate the properties of the $s$-harmonic extension 
of functions in $\Hsloc(\RN) \cap  L^1( \RN , (1+|x|)^{-N-2s} dx )$, 
that is, functions satisfying the extension problem. 
Subsection \ref{section:2.2} is devoted to the proof of local Pohozaev identity and 
the energy estimate is done in subsection \ref{section:2.3}. 
Section \ref{section:3} contains the proof of Theorem \ref{Theorem:1.1} for $ 1 < p \leq p_S(N,\ell)$ and 
in section \ref{section:4}, we deal with the case $ p_S(N,\ell) < p$.

%%%%%%%%%%%%%%%%%%%%%%%%%%%%%%%%%%%%%
%%%%%%%%%%%%%%%%%%%%%%%%%%%%%%%%%%%%%
\section{Preliminaries}
\label{section:2}
%%%%%%%%%%%%%%%%%%%%%%%%%%%%%%%%%%%%%
%%%%%%%%%%%%%%%%%%%%%%%%%%%%%%%%%%%%%
This section is divided into three subsections.
In subsection \ref{section:2.1}, we show properties of functions which belong to 
$\Hsloc(\mathbb R^N) \cap L^1(\RN, (1+|x|)^{-N-2s}dx) $,
and give a relationship between a solution $u$ of \eqref{eq:1.1} and $s$-harmonic functions. 
In subsection \ref{section:2.2}, we recall local regularity estimates for the extension problem.
This estimate is useful to establish the Pohozaev identity in Proposition~\ref{Proposition:2.2}.
Furthermore, applying an argument similar to the one in \cite{DDW},
we also give energy estimates for solutions of \eqref{eq:1.1} in subsection \ref{section:2.3}.

Throughout this paper, by the letter $C$
we denote generic positive constants and they may have different values also within the same line.
Furthermore, we write $X : = (x,t) \in \HS$. 

%%%%%%%%%%%%%%%%%%%%%%%%%%%%%%%%%%%%%
%%%%%%%%%%%%%%%%%%%%%%%%%%%%%%%%%%%%%
\subsection{Remark on notion of weak solutions}
\label{section:2.1}
%%%%%%%%%%%%%%%%%%%%%%%%%%%%%%%%%%%%%
%%%%%%%%%%%%%%%%%%%%%%%%%%%%%%%%%%%%%

We first prove properties of functions which belong to 
$  \Hsloc(\mathbb R^N) \cap  L^1(\RN, (1+|x|)^{-N-2s}dx) $.

\begin{lemma}
\label{Lemma:2.1}
Let $u\in \Hsloc(\mathbb R^N) \cap  L^1(\RN, (1+|x|)^{-N-2s}dx)$.
Then the following hold.
\vspace{-5pt}
\begin{itemize}
\item[\rm(i)] For any $\psi \in C^\infty_c(\RN)$, $\la u , \psi \ra_{\dHs(\RN)} \in \R$ and 
	\begin{equation}\label{eq:2.1}
		\begin{aligned}
			&\left| \la u , \psi \ra_{\dHs(\RN)} \right| 
			\\
			\leq \ & C(N,s) 
			\left\{ 
			\| u \|_{H^s(B_{2R})} \| \psi \|_{H^s(B_{2R})}
			+ \| u \|_{L^1( \RN, (1+|x|)^{-N-2s} dx )} 
			\| \psi \|_{L^1(B_{2R})}
			 \right\}
		\end{aligned}
	\end{equation}
where $\supp \psi \subset B_R$ with $R \geq 1$ and $C(N,s)$ is a constant depending on $N$ and $s$. 
In addition, 
let $\varphi_1\in C^\infty_c(\mathbb R^N)$ with $\varphi_1(x)\equiv1$ in $B_1$ 
and $\varphi_1(x)\equiv0$ in $B_2^c$,
and set $\varphi_n(x):=\varphi_1(n^{-1}x)$.
Then for any $\psi\in C^\infty_c(\mathbb R^N)$,
\begin{equation*}
\langle \varphi_nu,\psi\rangle_{\dot H^s(\mathbb R^N)}\to \langle u,\psi\rangle_{\dot H^s(\mathbb R^N)}
\quad{\rm as}\quad n\to\infty.
\end{equation*}
In particular, 
	\begin{equation}\label{eq:2.2}
		\la u , \psi \ra_{\dHs(\RN)} = \int_{\RN} u (-\Delta)^s \psi \, dx \quad 
		\text{for each $\psi \in C^\infty_{c}(\RN)$}.
	\end{equation}
\item[\rm(ii)]
Put
\begin{equation}
\label{eq:2.3}
P_s (x,t):= p_{N,s}\frac{t^{2s}}{\left( |x|^2 + t^2 \right)^{\frac{N+2s}{2}}},\quad 
U(x,t) := (P_s (\cdot, t)*u ) (x)
\end{equation}
where $p_{N,s}>0$ is chosen so that $\| P_s(\cdot, t) \|_{L^1(\RN)} = 1$. 
Then
\begin{equation}
\label{eq:2.4}
		 -\diver \left( t^{1-2s} \nabla U \right)=0\quad{\rm in}\,\,\,\mathbb R^{N+1}_+, \quad 
		U(x,0)=u(x), 
		\quad
		U \in \Hloc \left( \overline{\HS} ,t^{1-2s}dX \right) 
\end{equation}
and for each $\psi\in C^\infty_c(\overline{\mathbb R^{N+1}_+})$ with $\partial_t\psi(x,0)=0$,
\begin{equation}
\label{eq:2.5}
	\begin{aligned}
		-\lim_{t\to+0}\int_{\mathbb R^N}t^{1-2s}\partial_tU(x,t)\psi(x,t)\,dx
		&=
		\kappa_s\int_{\mathbb R^N}u(x)(-\Delta)^s\psi(x,0)\,dx 
		\\
		&= \kappa_s \la u , \psi (\cdot, 0) \ra_{\dHs(\RN)}. 
	\end{aligned}
\end{equation}
Here 
	\[
		\begin{aligned}
			& \Hloc \left( \overline{\HS} ,t^{1-2s}dX \right) 
			\\
			:= \ & \Set{ V : \HS \to \R |  
			\int_{B_R^+} t^{1-2s} \left\{ |\nabla V|^2 + V^2 \right\} dX < \infty 
			\ \ \text{for all $R > 0$}}
		\end{aligned}
	\]
and
\begin{equation}
\label{eq:2.6}
\kappa_s:=\frac{\Gamma(1-s)}{2^{2s-1}\Gamma(s)}.
\end{equation}
\end{itemize}
\end{lemma}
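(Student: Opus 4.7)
The plan for part (i) is to split the Gagliardo double integral defining $\la u, \psi \ra_{\dHs(\RN)}$ into a \emph{near} piece over $B_{2R} \times B_{2R}$ and a \emph{far} piece in which at least one variable lies in $B_{2R}^c$, exploiting $\supp \psi \subset B_R$. The near piece is bounded by Cauchy--Schwarz by $C \|u\|_{H^s(B_{2R})} \|\psi\|_{H^s(B_{2R})}$, producing the first term of \eqref{eq:2.1}. For the far piece, by symmetry of the integrand we may take $|x| \geq 2R$ and $y \in B_R$, so that $\psi(x) = 0$ and (using $R \geq 1$) $|x-y| \geq |x|/2 \geq (1+|x|)/4$; the factor $u(x) \psi(y)$ then gives the weighted $L^1$ bound in \eqref{eq:2.1}, while the factor $u(y) \psi(y)$ is absorbed into the first term via H\"older after integrating the kernel in $x$. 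Absolute convergence of this splitting makes $\la u, \psi\ra_{\dHs(\RN)}$ real and finite.

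For the approximation, $\la (\varphi_n - 1) u, \psi \ra_{\dHs(\RN)} \to 0$ follows by re-applying the same splitting to $(\varphi_n - 1) u$, which vanishes on $B_n$. Once $n \geq 2R$, the near part shrinks by absolute continuity of the Gagliardo integrand on $(B_{2R} \times B_{2R}) \cap \{|x| \geq n \text{ or } |y| \geq n\}$, and the far part tends to zero by dominated convergence via the weighted $L^1$ hypothesis. The identity \eqref{eq:2.2} then follows by testing the standard formula $\la \phi, \psi \ra_{\dHs(\RN)} = \int_{\RN} \phi \cdot (-\Delta)^s \psi \, dx$ (valid for any $\phi \in H^s(\RN)$ with compact support) against $\phi = \varphi_n u$ and passing to the limit; the right-hand side converges thanks to the pointwise decay $|(-\Delta)^s \psi(x)| \leq C_\psi (1+|x|)^{-N-2s}$ for $\psi \in C^\infty_c(\RN)$, combined with dominated convergence.

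For part (ii), the convolution $U(x,t) = (P_s(\cdot, t) * u)(x)$ is well-defined pointwise in $\HS$ thanks to $P_s(y,t) \leq C_t (1+|y|)^{-N-2s}$, uniform in $t$ on compact subsets of $(0,\infty)$, and $u \in L^1(\RN, (1+|x|)^{-N-2s}dx)$. That $-\diver(t^{1-2s} \nabla U) = 0$ in $\HS$, that $U(\cdot,0) = u$, and that $U \in \Hloc(\overline{\HS}, t^{1-2s}dX)$ are standard consequences of the Caffarelli--Silvestre framework: they follow by differentiating under the convolution sign (the kernel $P_s$ itself satisfies the equation) and by the approximate-identity property of $P_s(\cdot, t)$. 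The local weighted $H^1$ regularity up to $\{t=0\}$ on $B_R^+$ is obtained by direct convolution estimates on $\nabla P_s$ combined with the $\Hsloc$ regularity of the boundary datum.

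Finally, \eqref{eq:2.5} is the Caffarelli--Silvestre trace formula. When $u \in H^s(\RN)$ it is classical; to extend to our weaker class we test it with $\varphi_n u$ and pass to the limit, using part (i) on the right-hand side. The main obstacle is the left-hand side, since the extensions $U_n$ of $\varphi_n u$ converge to $U$ locally uniformly in $\HS$ but not necessarily up to $t = 0$. The remedy is to first convert the boundary flux into a bulk integral via integration by parts, yielding
\[
	- \lim_{t \to 0^+} \int_{\RN} t^{1-2s} \partial_t U(x,t) \psi(x,t) \, dx
	= \int_{\HS} t^{1-2s} \nabla U \cdot \nabla \psi \, dX ,
\]
where the condition $\partial_t \psi(x, 0) = 0$ guarantees the absence of a spurious boundary contribution at $t = 0$. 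The bulk integral is continuous with respect to the local weighted $H^1$ norm of $U$ on $\supp \psi$, which is controlled stably under the cutoff approximation via convolution estimates on $\nabla P_s$. Combining the classical identity applied to $\varphi_n u$ with this stability, and invoking part (i) on the right-hand side, concludes \eqref{eq:2.5}.
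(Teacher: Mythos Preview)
Your treatment of part~(i) is essentially the paper's: the same near/far decomposition of the Gagliardo double integral yields \eqref{eq:2.1}, and the cutoff convergence $\la \varphi_n u,\psi\ra_{\dHs}\to\la u,\psi\ra_{\dHs}$ follows because once $n\ge 2R$ the near term in \eqref{eq:2.1} for $(\varphi_n-1)u$ vanishes and the far term is controlled by $\|(\varphi_n-1)u\|_{L^1(\RN,(1+|x|)^{-N-2s}dx)}\to 0$. Your route to \eqref{eq:2.2} via the classical identity for $\varphi_n u\in H^s(\RN)$ differs only cosmetically from the paper's direct dominated-convergence computation; both rest on the decay $|(-\Delta)^s\psi(x)|\le C_\psi(1+|x|)^{-N-2s}$.

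For part~(ii), your justification of $U\in \Hloc(\overline{\HS},t^{1-2s}dX)$ is too sketchy. The paper makes it precise through the same cutoff decomposition you already use: write $u=\varphi_{n_0}u+(1-\varphi_{n_0})u$, handle the first piece by the standard $H^s$ extension theory, and for the second piece exploit $|x-y|\ge|y|/2$ to get pointwise bounds $|U_2|\le Ct^{2s}$ and $|\nabla U_2|\le C(t^{2s-1}+t^{2s})$ on $B_R\times(0,R)$.

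Your argument for \eqref{eq:2.5} is genuinely different from the paper's and in fact cleaner. The paper proceeds by a direct computation: using Fubini it writes
\[
-\int_{\RN} t^{1-2s}\partial_t U(x,t)\psi(x,t)\,dx=\int_{\RN} u(y)\bigl(I_1(y,t)+I_2(y,t)\bigr)\,dy,
\]
splits $I_1$ further, and establishes pointwise and decay bounds on the auxiliary integrands to justify dominated convergence as $t\to 0^+$. The hypothesis $\partial_t\psi(x,0)=0$ enters precisely here, to make the error terms $I_2,I_3$ of size $O(t^{2-2s})$. Your route instead rewrites the flux as the bulk integral $\int_{\HS}t^{1-2s}\nabla U\cdot\nabla\psi\,dX$ and then passes to the limit in the approximation $U_n=P_s(\cdot,t)\ast(\varphi_n u)$, using that $U-U_n\to 0$ in $H^1(B_R^+,t^{1-2s}dX)$ (this follows from the same kernel estimates as above, with $\epsilon_n:=\int_{|y|\ge n}|u(y)|(1+|y|)^{-N-2s}dy\to 0$). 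One point of confusion: in your approach the condition $\partial_t\psi(x,0)=0$ is \emph{not} used anywhere --- the integration-by-parts identity holds for every $\psi\in C^\infty_c(\overline{\HS})$ once $\nabla U\in L^2_{\mathrm{loc}}(t^{1-2s})$, and the classical identity for $H^s$ data likewise needs no such restriction. So your remark about a ``spurious boundary contribution'' is misplaced; as a byproduct, your argument actually yields \eqref{eq:2.5} for all $\psi\in C^\infty_c(\overline{\HS})$, a conclusion the paper reaches only later (Lemma~\ref{Lemma:2.2}) via an even-reflection and mollification trick.
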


\begin{remark}
	\label{Remark:2.1}
	\begin{enumerate}
	\item
	In Lemma \ref{Lemma:2.2} and Remark \ref{Remark:2.2}, 
	we will see that \eqref{eq:2.5} holds for every $\psi \in C^\infty_c ( \overline{ \HS} )$ 
	if we assume that $u$ is a solution of \eqref{eq:1.1}. 
	\item
		Here we collect properties on the Poisson kernel $P_s(x,t)$. 
		For the properties below, see, for instance, \cite{CS,DiNPV-12,FF,FF-15,JLX}. 
		First of all, it is known that the Fourier transform of $P_s(x,t)$ is given by 
		$\widehat{P}_s(\xi,t) = \theta_0 \left( 2\pi |\xi| t \right)$ where 
			\[
				\begin{aligned}
					& \wh{v} (\xi) := \int_{  \RN} v(x) e^{ - 2\pi i x \cdot \xi } \, dx 
					\quad \text{for $v \in L^1 (\RN)$},
					\\
					&\theta_0(t) : = \frac{2}{\Gamma(s)} \left( \frac{t}{2} \right)^s
					K_s(t), \quad \theta_0'' + \frac{1-2s}{t} \theta_0' - \theta_0 = 0 \quad 
					\text{in} \ (0,\infty), \quad 
					\theta_0 (0) = 1, 
				\end{aligned}
			\]
		$K_s(t)$ is the modified Bessel function of the second kind with order $s$ and 
			\[
				\kappa_s = \int_{0}^\infty t^{1-2s} \left\{ (\theta_0'(t))^2 + \theta_0^2(t) \right\} \,dt 
				= \frac{\Gamma(1-s)}{2^{2s-1} \Gamma(s) }.
			\]
		By these properties, it is possible to prove that for each $v \in H^s(\RN)$, 
			\[
				\int_{\HS} t^{1-2s} \left| \nabla ( P_s (\cdot, t) \ast v )(x) \right|^2 \, dX 
				= \kappa_s \int_{\RN} \left( 4\pi^2|\xi|^2 \right)^s \left| \wh{v} (\xi) \right|^2 \, d \xi 
				= \kappa_s \| v \|_{\dHs(\RN)}^2
			\]
		and for $U(x,t) = (P_s(\cdot , t) \ast u)(x)$ and $u \in \dHs (\RN)$, 
			\begin{equation}\label{eq:2.7}
				\int_{\HS} t^{1-2s} |\nabla U|^2 \, dX = \kappa_s \| u \|_{\dHs(\RN)}^2.
			\end{equation}
		Furthermore, for each $\zeta \in C^1_c(\overline{\HS})$, 
			\begin{equation}\label{eq:2.8}
				\kappa_s \| \zeta(\cdot,0) \|_{\dHs(\RN)}^2 
				= \int_{  \HS} t^{1-2s} | \nabla ( P_s (\cdot, t) \ast \zeta(\cdot, 0)  ) (x) |^2 \, dX 
				\leq \int_{  \HS} t^{1-2s} | \nabla \zeta |^2 \, dX. 
			\end{equation}
		Finally, for $\varphi \in C^\infty_{c} (\RN)$,
			\[
				- \lim_{t \to +0} t^{1-2s} \partial_t 
				\left( P_s( \cdot, t ) \ast \varphi \right) (x) = \kappa_s \left( -\Delta \right)^s \varphi (x) 
				\quad \text{for any $x \in \RN$}.
			\]
	\end{enumerate}
\end{remark}

\begin{proof}[Proof of Lemma \ref{Lemma:2.1}]
(i) 
We first show $\la u, \psi \ra_{\dHs(\RN)} \in \R$ and \eqref{eq:2.1}. 
Let $\psi \in C^\infty_{c}(\RN)$ with $\supp \psi \subset B_R$ and $R \geq 1$.  
Since $\psi \equiv 0$ on $B_R^c$ and $|x-y| \geq |y|/2$ for $x \in B_R$ and $y \in B_{2R}^c$, 
we see from $R \geq 1$ that 
	\[
		\frac{1+|y|}{|x-y|} \leq2 \frac{1+|y|}{|y|} \leq 4 \quad 
		\quad \text{for each $x \in B_R$ and $y \in B^c_{2R}$}
	\]
and 
	\[
		\begin{aligned}
			& \int_{\RN \times \RN} \frac{\left| u(x) - u(y) \right| \left| \psi(x) - \psi(y) \right|}
			{|x-y|^{N+2s}} \, dx \, d y
			\\
			= \ &
			\left( \int_{B_{2R} \times B_{2R} } + 2 \int_{B_{2R} \times B_{2R}^c}  \right)
			\frac{\left| u(x) - u(y) \right| \left| \psi(x) - \psi(y) \right|}{|x-y|^{N+2s}} \, dx \, d y 
			\\
			\leq \ & 
			\frac{2}{C_{N,s}} \| u \|_{\dHs(B_{2R})} \| \psi \|_{\dHs (B_{2R})} 
			\\
			& \qquad 
			+ 2 \int_{ B_{2R}} dx 
			\int_{|y| \geq 2R} 
			\left( |u(x)| + |u(y)| \right) |\psi(x)| \left( 1 + |y| \right)^{-N-2s} 
			\left( \frac{1+|y|}{|x-y|} \right)^{N+2s} dy 
			\\
			\leq \ & 
			\frac{2}{C_{N,s}} \| u \|_{\dHs(B_{2R})} \| \psi \|_{\dHs (B_{2R})} 
			\\
			& \qquad 
			+ C (N,s)
			\int_{ B_{2R}} dx \int_{ |y| \geq 2R} 
			\left( |u(x)| + |u(y)| \right) |\psi(x)| (1+|y|)^{-N-2s} \, dy
			\\
			\leq \ & 
			\frac{2}{C_{N,s}} \| u \|_{\dHs(B_{2R})} \| \psi \|_{\dHs (B_{2R})} 
			\\
			&\quad 
			+ C (N,s) 
			\left\{ \| u \|_{L^2(B_{2R})} \| \psi \|_{L^2(B_{2R})} 
			+ \| \psi \|_{L^1(B_{2R})} \| u \|_{L^1(\RN, (1+|x|)^{-N-2s} dx )}
			\right\}
		\end{aligned}
	\]
where $C_{N,s}$ is the constant given by \eqref{eq:1.3}. 
Since $\| u \|_{H^s(B_{2R})} < \infty$ due to $u \in \Hsloc (\RN)$, we observe that 
$\la u, \psi \ra_{\dHs(\RN)} \in \R$ and \eqref{eq:2.1} holds.

	The assertion $\la \varphi_n u , \psi \ra_{\dHs(\RN)} \to \la u , \psi \ra_{\dHs(\RN)}$ as $n \to \infty$ 
follows from \eqref{eq:2.1} and $\| \varphi_n u - u \|_{L^1(\RN , (1+|x|)^{-N-2s} dx )} \to 0$.

	Finally we prove \eqref{eq:2.2}. We remark that due to Fall and Weth \cite[Lemma 2.1]{FW}
and $\supp \psi \subset B_R$, there exists a $C_R>0$ such that 
	\[
		\left| \int_{|x-y| > \e } 
		\frac{\psi(x) - \psi(y) }{|x-y|^{N+2s}} dx dy 
		 \right| \leq C_R \| \psi \|_{C^2(\RN)} \left( 1 + |x| \right)^{-N-2s} 
		 \quad \text{for all $x \in \RN$ and $\e \in (0,1)$}.
	\]
Therefore, $\int_{\RN} u (-\Delta )^s \psi \, dx \in \R$ by $u \in L^1(\RN, (1+|x|)^{-N-2s}dx)$. 
Moreover, by the dominated convergence theorem, 
	\[
		\begin{aligned}
			\la u, \psi \ra 
			&= \frac{C_{N,s}}{2} \lim_{\e \to 0} 
			\int_{ |x-y|> \e } 
			\frac{ \left( u(x) - u(y) \right) \left( \psi(x) - \psi (y) \right) }{|x-y|^{N+2s}} dx dy
			\\
			&= \lim_{\e \to 0} C_{N,s} \int_{\RN} 
			\left[ u(x) \int_{|x-y| > \e} \frac{\psi(x) - \psi (y) }{|x-y|^{N+2s}} dy \right] dx 
			= \int_{  \RN} u(x) \left( - \Delta \right)^s \psi (x) dx.
		\end{aligned}
	\]
Hence, (i) holds. 

\vspace{5pt}

(ii) Notice that $U$ is well-defined thanks to $u \in L^1(\RN, (1+|x|)^{-N-2s}dx)$. 
We prove \eqref{eq:2.4}.
The assertion $ -\diver (t^{1-2s} \nabla U) = 0$ in $\mathbb R^{N+1}_+$ 
follows from the fact $ -\diver (t^{1-2s} \nabla P_s ) = 0 $ in $\mathbb R^{N+1}_+$. 
It is also easily seen that $P_s (x, t) \to \delta_0$ as $t \to +0$, hence, $U(x,0) = u(x)$ 
for $u \in C^\infty_{c} (\RN)$. 
For general $u \in \Hsloc (\RN) \cap  L^1(\RN,(1+|x|)^{-N-2s}dx) $, 
the assertion follows from $U \in \Hloc ( \overline{ \HS }, t^{1-2s}dX ) $ (this will be proved below) 
and the existence of the trace operator $\Hloc (\overline{ \HS}, t^{1-2s}dX ) \to \Hsloc (\RN)$ 
(see, for instance, Lions \cite{L-59}, and Demengel and Demengel \cite{DD-12}).

In order to prove $U \in \Hloc ( \overline{\HS}, t^{1-2s}dX )$, 
we will show $U \in H^1( B_R \times (0,R),t^{1-2s}dX )$ for each $R > 0$. 
To this end, let $\varphi_1$ be as in (i), $n_0 > 2R$ and write 
	\[
		U(x,t) = (P_s(\cdot, t)*u ) (x) 
		= ( P_s(\cdot,t)*( \varphi_{n_0} u + (1-\varphi_{n_0}) u ) ) (x) 
		=: U_1(x,t) + U_2(x,t).
	\]

	For $U_1$, we have $\nabla U_1 \in L^2(\HS,t^{1-2s}dX)$ due to $\varphi_{n_0} u \in H^s(\RN)$ and 
Remark \ref{Remark:2.1}. 
In addition, Young's inequality and the fact $\| P_s (\cdot , t) \|_{L^1(\RN)} = 1$ imply 
	\[
		\begin{aligned}
			\int_{0}^R dt\int_{\RN}   t^{1-2s}\left(P_s (\cdot , t) \ast (\varphi_{n_0} u )(x)\right)^2 \, dx 
			&\leq \int_{0}^R t^{1-2s} \| P_s (\cdot ,t ) \|_{L^1(\RN)}^2 \| \varphi_{n_0} u \|_{L^2(\RN)}^2 
			\, dt 
			\\
			& = C_{R,s} \| \varphi_{n_0} u \|_{L^2(\RN)}^2. 
		\end{aligned}
	\]
Hence, $U_1 \in H^1(B_R \times (0,R),t^{1-2s}dX)$ for every $R > 0$.

	On the other hand, for $U_2$, thanks to $n_0 > 2R$, we may write as  
	\[
U_2(x,t) 
=p_{N,s} \int_{|y|\ge{2R}} \frac{t^{2s}}{\left( |x-y|^2 + t^2  \right)^{\frac{N+2s}{2}}}(1-\varphi_{n_0}(y)) u(y)\,dy.
	\]
Noting $|x-y|\ge |y|/2$ for all $|y| \ge n_0$ and $|x| \le R$, 
we see that 
	\[
		\left| U_2(x,t) \right| \leq 
		C t^{2s} \int_{ |y| \geq n_0} |u(y)| (1+|y|)^{-N-2s} \left( \frac{1+|y|}{|x-y|} \right)^{N+2s} dy 
		\leq C t^{2s} \| u \|_{L^1(\RN,(1+|x|)^{-N-2s}dx)}
	\]
and that 
\begin{align*}
|\nabla U_2(x,t)|
&
\le C \int_{|y| \geq n_0} 
\left( \frac{t^{2s-1}}{|x-y|^{N+2s}} + \frac{t^{2s}}{|x-y|^{N+1+2s}} \right) |u(y)| \, dy 
\\
&
\le C \| u \|_{L^1(\RN,(1+|x|)^{-N-2s}dx)} \left( t^{2s-1} + t^{2s} \right)
\end{align*}
for all $(x,t) \in B_R \times (0,R)$. 
This yields $U_2 \in H^1(B_R \times (0,R),t^{1-2s}dX)$, which implies 
$U=U_1 +U_2 \in \Hloc \left( \overline{\HS},t^{1-2s}dX \right)$. 

Next we prove \eqref{eq:2.5}.
Let $\psi \in C^\infty_c(\overline{\mathbb R^{N+1}_+})$ with $\mathrm{supp}\, \psi \subset B_{R/2} \times [0,R] $ 
and $\partial_t \psi(x,0) = 0$. 
Then, by \eqref{eq:2.3} and Fubini's theorem, we have
\begin{equation}
\label{eq:2.9}
\begin{aligned}
\hspace{-7pt}
-\int_{\mathbb R^N}t^{1-2s}\partial_t U(x,t)\psi(x,t)\,dx
&=
-t^{1-2s}\int_{\mathbb R^N}\partial_t 
\left( \int_{\mathbb R^N} \frac{ p_{N,s}t^{2s} u(y) }{(|x-y|^2+t^2)^{\frac{N+2s}{2}}}\,dy \right)\,\psi(x,t)\,dx
\\
&
=
- t^{1-2s}\int_{\mathbb R^N}
\partial_t  \left[ \int_{\mathbb R^N} \frac{p_{N,s} t^{2s} \psi(x,t) }{(|x-y|^{2}+t^2)^{\frac{N+2s}{2}}}\,dx \right]u(y)\,dy
\\
&
\hspace{1cm}
+ t^{1-2s} \int_{\mathbb R^N}\left[\int_{\mathbb R^N} \frac{p_{N,s} t^{2s}\partial_t \psi(x,t) }{ (|x-y|^2 + t^2)^{\frac{N+2s}{2}}}\,dx\right]
u(y)\,dy
\\
&
=\int_{\mathbb R^N}\bigg(I_1(y,t)+I_2(y,t)\bigg)u(y)\, dy
\end{aligned}
\end{equation}
where
\begin{align*}
I_1(y,t)&:=- t^{1-2s}\partial_t  \left[ \int_{\mathbb R^N} \frac{p_{N,s} t^{2s} \psi(x,t) }{(|x-y|^{2}+t^2)^{\frac{N+2s}{2}}}\,dx \right],
\\
I_2(y,t)&:=t^{1-2s}\int_{\mathbb R^N} \frac{p_{N,s} t^{2s}\partial_t \psi(x,t) }{ (|x-y|^2 + t^2)^{\frac{N+2s}{2}}}\,dx.
\end{align*}
From $\psi (x,0) \in C^\infty_c(\RN)$ and Remark \ref{Remark:2.1}, we observe that 
	\begin{equation}\label{eq:2.10}
		-\lim_{t\to +0}t^{1-2s}\partial_t  (P_s(\cdot,t) \ast \psi(\cdot,0) ) (x)=\kappa_s(-\Delta)^s\psi(x,0) 
		\quad \text{for $x \in \RN$}.
	\end{equation}
Notice also that 
\begin{equation}
\label{eq:2.11}
\begin{aligned}
&
I_1(y,t)
\\
= \ &-
t^{1-2s}
\left\{\partial_t  \left[ \int_{\mathbb R^N} \frac{p_{N,s} t^{2s} \left(\psi(x,t) - \psi(x,0) \right) }
{(|x-y|^2+t^2)^{\frac{N+2s}{2}}}\,dx \right]\right.
\\
&
\hspace{6cm}
+ 
\left.
\partial_t  \left[ \int_{\mathbb R^N} \frac{p_{N,s} t^{2s} \psi(x,0) }{(|x-y|^{2}+t^2)^{\frac{N+2s}{2}}}\,dx \right] 
\right\}
\\
= \ & - I_3(y,t) - t^{1-2s}\partial_t  (P_s(\cdot,t) \ast \psi(\cdot,0) ) (y)
\end{aligned}
\end{equation}
where
	\[
I_3(y,t):=-t^{1-2s}\partial_t  \left[ \int_{\mathbb R^N} \frac{p_{N,s} t^{2s} \left(\psi(x,t) - \psi(x,0) \right) }
{(|x-y|^{2}+t^2)^{\frac{N+2s}{2}}}\,dx \right].
	\]
Thus, if we may prove that for all $y \in \RN$
	\begin{align}
		\label{align:2.12}
		&\lim_{t\to+0}\bigg(\left|I_2(y,t)\right|+ \left|I_3(y,t)\right|\bigg)=0 \quad 
		\text{strongly in $L^\infty(\RN)$}, 
		\\
		\label{align:2.13}
		&
		\begin{aligned} 
		&|I_2(y,t)|+|I_3(y,t)|+|t^{1-2s}\partial_t  (P_s(\cdot,t) \ast \psi(\cdot,0) ) (y)|\le C|y|^{-N-2s} 
		\\
		&\hspace{6cm}\text{for each $|y| \geq 2R$ and $t \in (0,1]$},
		\end{aligned}
	\end{align}
then we have \eqref{eq:2.5} by applying the dominated convergence theorem with 
$u \in \Hsloc (\RN) \cap L^1(\RN,(1+|x|)^{-N-2s}dx) $, 
and \eqref{eq:2.10}--\eqref{align:2.13} to \eqref{eq:2.9}.

	We first deal with \eqref{align:2.12}. 
Recall that $ {\rm supp}\, \psi \subset B_{R}$ and $\partial_t \psi (x,0) \equiv 0$. 
Then, 
	\[
		\begin{aligned}
		&
		\frac{\psi(x,t) - \psi(x,0)}{t} 
		= \int_{0}^{1} \partial_t \psi(x,t\theta) \,d\theta 
		= \int_0^1 \partial_t \psi (x,t \theta) - \partial_t \psi (x,0) \, d \theta 
		=:\Psi (x,t) \in C^\infty(\overline{\mathbb R^{N+1}_+}), 
		\\
		&
		\Psi(x,0)=0, \quad |\Psi(x,t)| \le \varphi_R(x) t, 
		\quad 
		\left| \partial_t \Psi(x,t) \right| \le \varphi_R(x) 
		\quad \text{for each $(x,t) \in \mathbb R^N \times [0,1]$}
		\end{aligned}
	\]
where $\varphi_R \in C_c(B_{3R/2})$. 
Hence, we observe from $(|x-y|^2 + t^2)^{1/2} \geq t$ that 
\begin{equation}
\label{eq:2.14}
\begin{aligned}
&
 |I_3(y,t)|
\\
= \ &
 \left|t^{1-2s} \partial_t 
\left[ \int_{\mathbb R^N} \frac{p_{N,s} t^{1+2s} \Psi(x,t) }{(|x-y|^2+t^2)^{\frac{N+2s}{2}}} \,dx \right] 
\right|
\\
\le \ &
C \int_{\mathbb R^N} t \left| \frac{ \Psi(x,t) }{(|x-y|^2+t^2)^{\frac{N+2s}{2}}} \right| \,dx 
+ t^2 \int_{\mathbb R^N} \left| \frac{p_{N,s} \partial_t\Psi(x,t) }{(|x-y|^2+t^2)^{\frac{N+2s}{2}}} \right| \,dx
\\
& + C t^2 \int_{  \RN} 
\left| \frac{\Psi(x,t)}{ (|x-y|^2 + t^2)^{ \frac{N+2s}{2} + \frac{1}{2} }  } \right| \, dx
\\
\le \ &
C \int_{\mathbb R^N} t^2 \frac{\varphi_R(x) }{(|x-y|^2+t^2)^{\frac{N+2s}{2}}} \,dx 
= t^{2-2s} \int_{\mathbb R^N} \frac{\varphi_R(y-tz) }{(|z|^2 +1)^{\frac{N+2s}{2}}} \,dz 
\leq C \| \varphi_R \|_{L^\infty(\RN)} t^{2-2s}
\end{aligned}
\end{equation}
where $C> 0$ is independent of $y \in \RN$. 
Hence, $I_3(\cdot,t) \to 0$ in $L^\infty(\RN)$ as $ t \to 0$.

	Since $|\partial_t \psi(x,t) | \le t\varphi_R(x)$ for all $(x,t) \in \mathbb R^N \times [0,1]$ 
due to $\partial_t\psi(x,0)=0$, in a similar way, we can check that 
$I_2(\cdot,t)  \to 0$ in $L^\infty(\RN)$ as $ t \to 0$
and \eqref{align:2.12} holds.

	Next, we treat \eqref{align:2.13}. 
Let $|y| \geq 2R$ and consider $I_3$.
Noting $|x-y|\ge|y|-|x|\ge |y|/4 $ for all $|y|\ge 2R$ and $|x|\le 3R/2$,
by \eqref{eq:2.14} we see that 
\begin{equation}
\label{eq:2.15}
\begin{aligned}
|I_3(y,t)|
&
\le 
C\int_{\mathbb R^N} \frac{t^2 \varphi_R(x) }{( |x-y|^2 + t^2 )^{\frac{N+2s}{2}}} \,dx 
\\
&
\le
C\int_{|x|\le{3R/2}} t^2  \| \varphi_R \|_{L^\infty(\RN)} |x- y|^{-N-2s} \, dx
\le C t^2 |y|^{-N-2s}.
\end{aligned}
\end{equation}
In a similar way, we may prove 
\begin{equation}
\label{eq:2.16}
|I_2(y,t)|\le C t^2 |y|^{-N-2s}.
\end{equation}
Furthermore,
applying the change of variables and the integration by parts 
and noting $\supp \psi \subset B_R$, we obtain
\begin{equation}
\label{eq:2.17}
\begin{aligned}
t^{1-2s}\partial_t  (P_s(\cdot,t) \ast \psi(\cdot,0) ) (y)
&
=
t^{1-2s} \partial_t \left[ \int_{\mathbb R^N} \frac{p_{N,s} \psi(y-tz,0) }{(|z|^2+1)^{\frac{N+2s}{2}}} \,dz \right] 
\\
&
=
-t^{1-2s} \int_{\mathbb R^N} \frac{p_{N,s} \nabla_y \psi(y-tz,0) \cdot z }{(|z|^2+1)^{\frac{N+2s}{2}}} \,dz
\\
&
= 
\int_{|x|\le R}\frac{p_{N,s} \nabla_x \psi(x,0) \cdot (x-y) }{(|x-y|^2+t^2)^{\frac{N+2s}{2}}} \,dx
\\
&
=-\int_{|x|\le R} 
p_{N,s} \psi(x,0) \mathrm{div}_x \left[ \frac{x-y}{(|x-y|^2+t^2)^{\frac{N+2s}{2}}} \right] \,dx.
\end{aligned}
\end{equation}
Since it is easily seen that for $|y| \ge 2R$
	\[
\int_{|x|\le R} |\psi(x,0) |
\left|  \mathrm{div}_x \left[ \frac{x-y}{(|x-y|^2+t^2)^{\frac{N+2s}{2}}} \right]  \right| \,dx
\le C\int_{|x|\le R} \left|  x - y \right|^{ -N-2s } \,dx \le C |y|^{-N-2s},
	\]
by \eqref{eq:2.17}, we have
	\[
	\left| t^{1-2s} \partial_t \left( P_s(\cdot , t) \ast \psi (\cdot , 0) \right) (y) \right| \leq  C |y|^{-N-2s}.
	\]
This together with \eqref{eq:2.15} and \eqref{eq:2.16} implies \eqref{align:2.13}, 
which completes the proof of Lemma~\ref{Lemma:2.1}. 
\end{proof}
\par
Applying Lemma~\ref{Lemma:2.1}, we have the following.
\begin{lemma}
\label{Lemma:2.2}
Let $u \in \Hsloc (\RN) \cap L^\infty_{\rm loc} (\RN) \cap L^1(\RN, (1+|x|)^{-N-2s} dx )$ 
be a solution of \eqref{eq:1.1} under \eqref{eq:1.2}, and $U$ be the function given in \eqref{eq:2.3}.
Then
\begin{equation}
\label{eq:2.18}
	\begin{aligned}
		\int_{\mathbb R^{N+1}_+}t^{1-2s}\nabla U \cdot\nabla\psi\,dX
		&=\kappa_s\int_{\mathbb R^N}|x|^\ell|u|^{p-1}u\psi(x,0)\, dx 
		= \kappa_s \la u , \psi (\cdot,0) \ra_{\dHs(\RN)}
	\end{aligned}
\end{equation}
for every $ \psi \in C^1_c ( \overline{\R^{N+1}_+} )$ 
where $\kappa_s$ is the constant given in \eqref{eq:2.6}.
\end{lemma}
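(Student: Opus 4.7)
The second equality in \eqref{eq:2.18} is immediate from Definition \ref{Definition:1.1} and Remark \ref{Remark:1.1}(ii), since $\psi(\cdot,0)\in C^1_c(\RN)$ whenever $\psi\in C^1_c(\overline{\HS})$. So the task is to establish the first equality in \eqref{eq:2.18}.

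The plan is to integrate by parts on $\{t>\e\}$ and then invoke Lemma \ref{Lemma:2.1}(ii). Since $u\in L^1(\RN,(1+|x|)^{-N-2s}dx)$ and $P_s$ is smooth for $t>0$, differentiation under the integral yields $U\in C^\infty(\HS)$ with $-\mathrm{div}(t^{1-2s}\nabla U)=0$ classically in $\HS$. By mollification of $\psi$ in the $C^1$-norm on a common compact set, it suffices to consider $\psi\in C^\infty_c(\overline{\HS})$. For such $\psi$ and $\e>0$, classical integration by parts on $\{t>\e\}$ gives
$$\int_\e^\infty\!\!\int_\RN t^{1-2s}\nabla U\cdot\nabla\psi\,dx\,dt=-\int_\RN \e^{1-2s}\partial_t U(x,\e)\psi(x,\e)\,dx,$$
and the left side converges to $\int_{\HS}t^{1-2s}\nabla U\cdot\nabla\psi\,dX$ as $\e\to 0^+$ by dominated convergence, using $\nabla U\in L^2_{\mathrm{loc}}(\overline{\HS},t^{1-2s}dX)$ from Lemma \ref{Lemma:2.1}(ii).

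The main obstacle is identifying the limit of the right-hand side. Lemma \ref{Lemma:2.1}(ii) handles it only when $\partial_t\psi(x,0)=0$. To circumvent this, pick $\chi\in C^\infty_c([0,\infty))$ with $\chi(0)=1$, $\chi'(0)=0$, and split $\psi=\psi_a+\psi_b$ with $\psi_a(x,t):=\chi(t)\psi(x,0)\in C^\infty_c(\overline{\HS})$ and $\psi_b:=\psi-\psi_a$. Since $\partial_t\psi_a(x,0)=0$, Lemma \ref{Lemma:2.1}(ii) applied to $\psi_a$ produces the desired $\kappa_s\langle u,\psi(\cdot,0)\rangle_{\dHs(\RN)}$. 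The remainder satisfies $\psi_b(\cdot,0)\equiv 0$, so $|\psi_b(x,t)|\le Ct$ on its support, and it remains to show that its boundary contribution vanishes in the limit.

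To kill the $\psi_b$-contribution the plan is to prove $\int_{\HS}t^{1-2s}\nabla U\cdot\nabla\psi_b\,dX=0$; combined with the integration-by-parts identity above applied to $\psi_b$ in place of $\psi$, this forces the boundary limit for $\psi_b$ to vanish. Pick $\eta_\delta\in C^\infty([0,\infty))$ equal to $0$ on $[0,\delta]$, equal to $1$ on $[2\delta,\infty)$, with $|\eta_\delta'|\le C\delta^{-1}$. Then $\eta_\delta\psi_b$ is compactly supported in $\HS$, so classical integration by parts with harmonicity gives $\int_{\HS}t^{1-2s}\nabla U\cdot\nabla(\eta_\delta\psi_b)\,dX=0$. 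Expanding and letting $\delta\to 0$, the $\eta_\delta\nabla\psi_b$ piece converges to $\int_{\HS}t^{1-2s}\nabla U\cdot\nabla\psi_b\,dX$ by dominated convergence, while the $\psi_b\nabla\eta_\delta$ piece is bounded, via $|\psi_b|\le 2C\delta$ and $|\eta_\delta'|\le C\delta^{-1}$ on $[\delta,2\delta]$ together with Cauchy--Schwarz, by
$$C\Bigl(\int_\delta^{2\delta}\!\!\int_K t^{1-2s}|\partial_t U|^2\,dx\,dt\Bigr)^{1/2}\Bigl(\int_\delta^{2\delta}\!\!\int_K t^{1-2s}\,dx\,dt\Bigr)^{1/2},$$
where $K$ is a compact set containing the $x$-projection of $\supp\psi_b$. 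Both factors vanish as $\delta\to 0$: the first by absolute continuity of the weighted $L^2$-integral from Lemma \ref{Lemma:2.1}(ii), the second by direct computation, yielding $O(\delta^{1-s})$. This cutoff argument is the delicate step, where the $t^{1-2s}$-weighted $H^1$-regularity of $U$ is essential.
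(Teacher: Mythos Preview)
Your proof is correct, but it handles the passage from test functions with $\partial_t\psi(x,0)=0$ to general test functions differently from the paper. The paper proceeds by even reflection: it extends $\psi$ evenly across $\{t=0\}$ to get $\Psi$, mollifies in the $t$-variable to obtain $\Psi_\varepsilon\in C^\infty_c(\overline{\HS})$ with $\partial_t\Psi_\varepsilon(x,0)=0$ by symmetry, applies the already-established case to $\Psi_\varepsilon$, and then passes to the limit using $\Psi_\varepsilon\to\psi$ strongly in $H^1(\HS,t^{1-2s}dX)$ together with $\Psi_\varepsilon(\cdot,0)\to\psi(\cdot,0)$ in $C^k(\RN)$. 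Your route instead splits $\psi=\psi_a+\psi_b$ with $\psi_a(x,t)=\chi(t)\psi(x,0)$, dispatches $\psi_a$ via Lemma~\ref{Lemma:2.1}(ii), and then kills the $\psi_b$-contribution directly by a vertical cutoff $\eta_\delta(t)$, exploiting that $\psi_b(\cdot,0)\equiv 0$ forces $|\psi_b|\le Ct$ so that $|\psi_b\,\eta_\delta'|$ stays bounded as $\delta\to 0$. Both arguments ultimately rest on $U\in H^1_{\mathrm{loc}}(\overline{\HS},t^{1-2s}dX)$; the reflection-mollification approach is a reusable black box for upgrading test-function classes, while your decomposition is more hands-on and makes transparent exactly why the Neumann-type boundary term is insensitive to $\partial_t\psi(x,0)$.
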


	\begin{remark}\label{Remark:2.2}
		Since $U \in C^\infty(\HS)$, by \eqref{eq:2.4}, \eqref{eq:2.18} and the integration by parts, 
		for every $\psi \in C^\infty_{c}( \overline{\HS} )$, we have 
			\[
				\begin{aligned}
				- \lim_{\tau  \to +0} \int_{\RN} \tau^{1-2s} \partial_t U (x,\tau) \psi(x,\tau) \, dx 
				&= \lim_{\tau \to +0} \int_{ \RN \times (\tau,\infty)} t^{1-2s} \nabla U \cdot \nabla \psi \, dX 
				\\
				&= \kappa_s \int_{\RN} |x|^\ell |u|^{p-1} u \psi(x,0) \, dx 
				= \kappa_s \la u , \psi (\cdot, 0) \ra_{\dHs(\RN)}.
				\end{aligned}
			\]
		Hence, for solutions $u \in \dHs (\RN) \cap L^\infty_{\rm loc} (\RN) \cap L^1(\RN,(1+|x|)^{-N-2s}dx) $ 
		of \eqref{eq:1.1}, the corresponding extension problem is 
			\[
				\left\{\begin{aligned}
					-\diver \left( t^{1-2s} \nabla U \right) &=0 & &\text{in} \ \HS, \\
					U(x,0) &= u(x) & &\text{on} \ \RN,\\
					- \lim_{t \to +0} t^{1-2s} \partial_t U(x,t) &= \kappa_s |x|^\ell |u|^{p-1} u 
					& &\text{on} \ \RN. 
				\end{aligned}\right.
			\]
	\end{remark}

	\begin{proof}[Proof of Lemma \ref{Lemma:2.2}]
We first prove \eqref{eq:2.18} for $\psi \in C^\infty_c ( \overline{\R^{N+1}_+} )$ with 
$\partial_t \psi(x,0) \equiv 0$ on $\RN$. 
Let $\e>0$, $u$ be a solution of \eqref{eq:1.1} 
and $\psi \in C^\infty_c ( \overline{ \R^{N+1}_+ } )$ with $\partial_t \psi(x,0) = 0$.
Then we have 
\begin{align*}
0 
&
= 
\int_{\mathbb R^N \times (\varepsilon,\infty)} - \diver \left(t^{1-2s} \nabla U \right) \psi (x,t) \,dX 
\\
&
= \int_{\mathbb R^N \times (\varepsilon,\infty)} t^{1-2s} \nabla U \cdot \nabla \psi \,dX 
+ \int_{\mathbb R^N} \varepsilon^{1-2s} \partial_t U(\varepsilon,x) \psi(x,\varepsilon) \,dx.
\end{align*}
By letting $\varepsilon \to 0$ and \eqref{eq:2.5}, it follows that 
\[
\int_{\mathbb R^{N+1}_+} t^{1-2s} \nabla U \cdot \nabla \psi \,dX 
= \kappa_s \int_{\mathbb R^N} u(x) (-\Delta)^s \psi(x,0) \,dx. 
\]
Since $u$ is a solution of \eqref{eq:1.1}, Lemma \ref{Lemma:2.1} yields 
	\[
		\int_{\mathbb R^{N+1}_+} t^{1-2s} \nabla U \cdot \nabla \psi \,dX 
		= \kappa_s \la u , \psi (\cdot, 0) \ra_{\dHs(\RN)} 
		= \kappa_s \int_{\RN} |x|^\ell |u|^{p-1} u \psi(x,0) \, dx
	\]
for every $\psi \in C^\infty_c (\overline{\HS} )$ with $\partial_t \psi (x,0) \equiv 0$.

	Next, we show \eqref{eq:2.18} for $\psi \in C^\infty_c ( \overline{\R^{N+1}_+} )$. 
Let $\psi \in C^\infty_c ( \overline{\mathbb R^{N+1}_+} )$  
and $(\rho_\varepsilon(t))_\varepsilon$ be a mollifier in $t$ with $\rho_\e(-t) = \rho_\e(t)$. 
Set 
	\[
\Psi(x,t) := 
\left\{
\begin{aligned}
& \psi(x,t) & &\text{if $t\geq 0$},
\\
& \psi(x,-t) & &\text{if $t < 0$}.
\end{aligned}
\right.
	\]
Then $\Psi \in C^\infty( \mathbb R^{N+1} \setminus \{ t=0 \} ) \cap W^{1,\infty} (\mathbb R^{N+1})$
and $\Psi(\cdot,t) \in C^\infty_c(\mathbb R^N)$ for every $t\in \R$. 
Define $\Psi_\varepsilon$ by 
	\[
\Psi_\varepsilon(x,t)
:= \int_{\mathbb R} \rho_\varepsilon (t-\tau) \Psi(x,\tau) \,d\tau 
= \int_{\mathbb R} \rho_\varepsilon (\tau) \Psi(x, t - \tau) \,d\tau.
	\]
It is easily seen that $\Psi_\varepsilon \in C^\infty_c ( \mathbb R^{N+1} )$ 
with $\partial_t \Psi_\varepsilon(x,0) = 0$ thanks to the symmetry of $\Psi$ and $\rho_\e$ in $t$. 
Since it holds that for any $k\in\mathbb N$ and $t_1>0$
	\begin{equation}\label{eq:2.19}
		\lim_{\varepsilon\to 0}\bigg(\| \Psi_\varepsilon (\cdot, 0) - \psi(\cdot,0) \|_{C^k(\mathbb R^N)} 
		+ \sup_{ t \ge t_1 >0 } \| \Psi_\varepsilon (\cdot, t) - \psi(\cdot,t) \|_{C^k(\mathbb R^N)}\bigg)=0
	\end{equation}
and $\| \Psi_\varepsilon \|_{W^{1,\infty}(\mathbb R^{N+1}_+)} \le M_0$ for some $M_0>0$, 
we deduce that
\begin{equation}
\label{eq:2.20}
\Psi_\varepsilon \to \psi \quad \text{strongly in $H^1(\mathbb R^{N+1}_+, t^{1-2s}dX)$}.
\end{equation}
Therefore, from
	\[
\int_{\mathbb R^{N+1}_+} t^{1-2s} \nabla U \cdot \nabla \Psi_\varepsilon \,dX 
= \kappa_s \int_{\mathbb R^N} |x|^\ell |u|^{p-1}u \Psi_\varepsilon(x,0) \,dx 
	\]
with \eqref{eq:2.19}, \eqref{eq:2.20} and $u \in L^\infty_{\rm loc} (\RN)$, as $\e \to 0$, 
we have \eqref{eq:2.18} for all $\psi \in C^\infty_c ( \overline{\HS} )$. 
Finally, since we may approximate functions in $C^\infty_c  ( \overline{ \HS} )$ 
by functions in $C^1_c ( \overline{ \HS} )$ 
in the $C^1( \overline{ \HS} )$ sense, \eqref{eq:2.18} holds for every $C^1_c (\overline{ \HS} )$ 
and we complete the proof. 
\end{proof}
%
%%%%%%%%%%%%%%%%%%%%%%%%%%%%%%%%%%%%%
%%%%%%%%%%%%%%%%%%%%%%%%%%%%%%%%%%%%%
\subsection{Local Regularity and the Pohozaev identity}
\label{section:2.2}
%%%%%%%%%%%%%%%%%%%%%%%%%%%%%%%%%%%%%
%%%%%%%%%%%%%%%%%%%%%%%%%%%%%%%%%%%%%

In this subsection we recall local regularity estimates for the extension problem in Remark \ref{Remark:2.2},
which are taken from \cite[Section~3.1]{FF} (see also \cite{CaSi,JLX}).
Furthermore, we prove the Pohozaev identity for solutions to the extension problem.
\vspace{5pt}
\par 
We first recall local regularity estimates.
\begin{proposition}
\label{Proposition:2.1}
{\rm(Fall and Felli \cite[Proposition~3.2, Lemma~3.3]{FF}, Jin, Li and Xiong \cite[Proposition~2.6]{JLX}, 
Cabr\`e and Sire \cite[Lemma~4.5]{CaSi})}
Let $R_0>0$, $x_0 \in \RN$, $g(x,u) : B_{4R_0}(x_0) \times \R \to \R $ 
and $W \in H^1(B^+_{4R_0} (x,0), t^{1-2s}dX)$ be a weak solution to 
	\[
		\left\{\begin{aligned}
			-\diver \left( t^{1-2s} \nabla W \right) &= 0 
			& &\mathrm{in} \ B_{4R_0}^+(x_0,0),
			\\
			- \lim_{t \to +0} t^{1-2s} \partial_t W (x,t) 
			&= \kappa_s g(x, W(x,0) ) & &\mathrm{on} \ B_{4R_0}(x_0),
		\end{aligned}\right.
	\]
that is, for all $\varphi \in C^\infty_{c}( B_{4R_0}^+(x_0,0) \cup B_{4R_0} (x_0) )$, 
	\[
		\int_{B_{4R_0}^+(x_0)} t^{1-2s} \nabla W \cdot \nabla \varphi \,  d X 
		= \kappa_s \int_{B_{4R_0}(x_0)} g \left( x,W(x,0) \right) \varphi(x,0) \, dx.
	\]
\begin{enumerate}
	\item[{\rm (i)}] 
	Suppose that $g(x,u) := a(x) u + b(x)$ with $a,b \in L^q(B_{4R_0}(x_0))$ for some $q > N/(2s)$. 
	For any $\mu > 0$ there exists a $C=C(N,s,q, \mu , \| a \|_{L^q(B_{4R_0} (x_0) )} )$ such that 
		\[
			\left\| W \right\|_{L^\infty ( B_{2R_0}^+(x_0,0) ) }
			\leq C \left[ \| W \|_{L^{\mu} ( B_{3R_0}^+(x_0,0) ) } + \| b \|_{L^{q}(B_{4R_0} (x_0) )} \right]. 
		\]
	In addition, there exists an $\alpha \in (0,1)$ such that $W \in C^{\alpha} ( \overline{B_{R_0}^+(x_0,0)} )$ 
	and 
		\[
			\| W \|_{C^{\alpha} ( \overline{B_{R_0}^+(x_0,0)  )} } 
			\leq C \left[ \| W \|_{L^\infty (B_{2R_0}^+(x_0,0)) } + \| b \|_{L^q(B_{4R_0}(x_0))}  \right].
		\]
	\item[{\rm (ii)}] 
	Suppose that $W \in C^\alpha ( \overline{B_{2R_0}^+ (x_0,0) } )$ and 
	$g(x,u ) \in C^1( \overline{B_{4R_0} (x_0)} \times \R )$ for some $\alpha \in (0,1)$. 
	Then there exist $\beta \in (0,1)$ and 
	$C = C(N,s, \| g \|_{C^1( \overline{B_{2R_0}^+(x_0,0)} \times [ -A , A ] )})$ 
	where $A := \| W \|_{C^\alpha( \overline{B_{2R_0}^+ (x_0,0)} ) }$ such that 
	$\nabla_x W \in C^\beta ( \overline{B_{R_0}^+(x_0,0)})$ and 
	$ t^{1-2s} \partial_t W \in  C^\beta ( \overline{B_{R_0}^+(x_0,0)})$ with 
		\[
			\left\| \nabla_x W \right\|_{C^\beta ( \overline{B_{R_0}^+(x_0,0)})} 
			+ \left\| t^{1-2s} \partial_t W \right\|_{C^\beta ( \overline{B_{R_0}^+(x_0,0)})} 
			\leq C.
		\]
\end{enumerate}
\end{proposition}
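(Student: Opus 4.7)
The proof proceeds by recognizing that $t^{1-2s}$ is a Muckenhoupt $A_2$-weight when $0 < s < 1$, which makes the theory of degenerate elliptic equations developed by Fabes--Kenig--Serapioni available. The plan for part (i) is as follows. For the $L^\infty$ bound, I would carry out Moser iteration up to the boundary $\{t=0\}$: choose a cutoff $\eta \in C^\infty_c(B_{3R_0}^+(x_0,0)\cup B_{3R_0}(x_0))$ equal to one on $B_{2R_0}^+(x_0,0)\cup B_{2R_0}(x_0)$, test the weak formulation with $\eta^2 \bar W^\beta W$ for suitable truncations $\bar W$ of $|W|$ and exponents $\beta \geq 0$, and produce a weighted Caccioppoli-type inequality of the form
\[
\int t^{1-2s}\bigl|\nabla\bigl(\eta W^{(\beta+1)/2}\bigr)\bigr|^2\,dX \leq C(\beta)\int t^{1-2s}|\nabla\eta|^2 W^{\beta+1}\,dX + \text{boundary terms involving } a,b.
\]
The boundary terms are controlled by Hölder's inequality thanks to $a,b\in L^q$ with $q>N/(2s)$; the extra integrability required to close the iteration is provided by the weighted trace Sobolev embedding $H^1(B_R^+,t^{1-2s}dX)\hookrightarrow L^{2N/(N-2s)}(B_R)$ which is standard in the Caffarelli--Silvestre extension framework. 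Iterating yields the $L^\infty$ estimate, and the Hölder estimate then follows from De Giorgi--Nash oscillation decay for $A_2$-degenerate equations.

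For part (ii), once $W\in C^\alpha(\overline{B_{2R_0}^+(x_0,0)})$ the composite $g(x,W(x,0))$ lies in $C^\alpha(\overline{B_{2R_0}(x_0)})$, and I would establish Schauder-type estimates for the extension problem with Hölder boundary data. For the tangential derivatives, the standard difference-quotient argument $\tau_h^i W(x,t):=(W(x+he_i,t)-W(x,t))/h$ shows that $\tau_h^i W$ solves the same degenerate equation with boundary condition involving $\partial_{x_i}g$, yielding uniform $C^\beta$ control of $\nabla_x W$. For the conormal component, the key observation is that the conjugate function $V:=t^{1-2s}\partial_t W$ solves the conjugate degenerate equation $-\mathrm{div}(t^{2s-1}\nabla V)=0$ in $B_{R_0}^+(x_0,0)$ with boundary value $-\kappa_s g(x,W(x,0))$; since $t^{2s-1}$ is also an $A_2$-weight for $0<s<1$, the regularity theory of part (i) (with $s$ replaced by $1-s$) applied to $V$ gives the desired Hölder bound on $t^{1-2s}\partial_t W$.

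The principal obstacle is carrying out the boundary regularity up to $\{t=0\}$ despite the degeneracy of the coefficient $t^{1-2s}$ and the nonstandard Neumann-type condition coupling $W|_{t=0}$ to the $N$-dimensional equation $-\lim_{t\to +0}t^{1-2s}\partial_t W=\kappa_s g$. This is resolved precisely by exploiting the $A_2$ structure of the weight, which makes the Fabes--Kenig--Serapioni machinery (weighted Caccioppoli, weighted Sobolev and Poincaré inequalities, and oscillation decay) applicable in the half-space geometry. Since each of Fall--Felli, Jin--Li--Xiong, and Cabré--Sire has already executed this program in essentially the required form, the most honest presentation is to cite these references rather than reconstruct the arguments in detail.
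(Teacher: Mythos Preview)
Your proposal is correct and matches the paper's treatment: the paper does not prove Proposition~2.1 at all but simply records it as a known result with citations to Fall--Felli, Jin--Li--Xiong, and Cabr\`e--Sire, which is precisely what you conclude as well. Your sketch of the underlying arguments (Moser iteration with the trace Sobolev embedding for (i), and the conjugate-equation trick $V=t^{1-2s}\partial_t W$ solving the $t^{2s-1}$-weighted equation for (ii)) accurately reflects what those references actually do.
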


For solutions of \eqref{eq:1.1}, we have 
\begin{lemma}
\label{Lemma:2.3}
Let $u \in \Hsloc (\RN) \cap L^\infty_{\rm loc} (\RN) \cap L^1(\RN, (1+|x|)^{-N-2s}dx)  $ 
be a solution of \eqref{eq:1.1} under \eqref{eq:1.2} and $U$ be the function given in \eqref{eq:2.3}. 
Then for each $R>1$ there exists an $\alpha_R \in (0,1)$ such that 
$U \in C^{\alpha_R} ( \overline{B_R^+} )$ and 
$\nabla_x U , t^{1-2s} \partial_t U \in C^{\alpha_R} ( \overline{(B_R \setminus B_{1/R}) \times (0,R)} )$. 
As a consequence with Remark \ref{Remark:2.2}, 
	\[
		- \lim_{t\to+0}t^{1-2s} \partial_t U(x,t) 
		=\kappa_s |x|^\ell |u(x)|^{p-1} u(x)\quad\mbox{in}\quad C_{\rm loc} (\RN \setminus \{0\}).
	\] 
\end{lemma}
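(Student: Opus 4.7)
The plan is to invoke the local regularity estimates in Proposition \ref{Proposition:2.1} applied to the extension problem for $U$ recorded in Remark \ref{Remark:2.2}. First I would freeze the nonlinearity and apply part (i) to obtain H\"older regularity of $U$ up to the flat boundary $\{t=0\}$; then, on sets that avoid the origin in $x$, I would apply part (ii) to upgrade this to H\"older regularity of $\nabla_x U$ and $t^{1-2s}\partial_t U$.

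For the first step, fix $R>1$ and an arbitrary $x_0 \in \overline{B_R}$. Setting $a \equiv 0$ and $b(x) := |x|^\ell |u(x)|^{p-1} u(x)$ casts the boundary condition in the affine form required by Proposition \ref{Proposition:2.1}(i). Since $u \in L^\infty_{\rm loc}(\RN)$, the factor $|u|^{p-1}u$ is bounded on $B_{4R_0}(x_0)$ for any $R_0>0$, so the $L^q$-integrability of $b$ reduces to that of $|x|^\ell$. Here the hypothesis $\ell>-2s$ is crucial: it guarantees $N/(2s) < N/(-\ell)$ when $\ell<0$, so one can pick $q \in (N/(2s), N/(-\ell))$ (and any $q>N/(2s)$ if $\ell\ge 0$), giving $b \in L^q(B_{4R_0}(x_0))$. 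The $L^\mu$-bound on $U$ needed to feed the estimate is supplied by the $U_1,U_2$ splitting already carried out in the proof of Lemma \ref{Lemma:2.1}(ii), which in fact yields $U \in L^\infty(B_R^+)$. Covering $\overline{B_R}$ by finitely many such half-balls and combining the resulting H\"older bounds near the flat boundary with interior elliptic estimates (the equation being uniformly elliptic where $t>0$) yields $U \in C^{\alpha_R}(\overline{B_R^+})$.

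For the second step, I would restrict to $x_0$ with $|x_0| > 1/R$ and choose $R_0 < |x_0|/8$ so that $0 \notin B_{4R_0}(x_0)$. Then $|x|^\ell$ is smooth on $B_{4R_0}(x_0)$, and the map $v \mapsto |v|^{p-1}v$ belongs to $C^1(\R)$ for every $p>1$ (its derivative $p|v|^{p-1}$ is continuous and vanishes at $v=0$). Hence $g(x,v) := |x|^\ell |v|^{p-1}v$ lies in $C^1(\overline{B_{4R_0}(x_0)} \times \R)$, and Proposition \ref{Proposition:2.1}(ii) applied with $W=U$---whose H\"older continuity on $\overline{B_{2R_0}^+(x_0,0)}$ was just established---yields $\nabla_x U, t^{1-2s}\partial_t U \in C^\beta(\overline{B_{R_0}^+(x_0,0)})$. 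Finitely many such half-balls cover the flat face $(\overline{B_R}\setminus B_{1/R}) \times \{0\}$, and combining with interior estimates away from $\{t=0\}$ gives the claimed regularity on $\overline{(B_R \setminus B_{1/R}) \times (0,R)}$. The consequence on the boundary limit then follows at once: Remark \ref{Remark:2.2} identifies $-\lim_{t\to+0} t^{1-2s}\partial_t U(x,t)$ with $\kappa_s |x|^\ell |u|^{p-1}u$ in a weak sense, and the $C^\beta$-regularity just obtained upgrades this to locally uniform convergence on $\RN \setminus \{0\}$.

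The main obstacle is handling the singular weight $|x|^\ell$ at the origin when $-2s < \ell < 0$. Its presence forces the careful choice of exponent $q$ in the application of Proposition \ref{Proposition:2.1}(i)---and the hypothesis $\ell>-2s$ is exactly what makes the range $(N/(2s), N/(-\ell))$ nonempty---while simultaneously preventing us from applying part (ii) in any neighborhood of the origin. This explains why the lemma asserts $C^{\alpha_R}$-regularity of $U$ on the full half-ball $\overline{B_R^+}$, but restricts the stronger conclusion on $\nabla_x U$ and $t^{1-2s}\partial_t U$ to regions where $|x|$ stays bounded away from zero.
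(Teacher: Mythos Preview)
Your proposal is correct and follows essentially the same approach as the paper: apply Proposition~\ref{Proposition:2.1}(i) with the nonlinearity frozen (the paper takes $a(x)=|x|^\ell|u(x)|^{p-1}$ and $b\equiv 0$ rather than your $a\equiv 0$, $b(x)=|x|^\ell|u|^{p-1}u$, but this is immaterial since $u\in L^\infty_{\rm loc}$ makes the $L^q$ condition identical) to get $U\in C^{\alpha_R}(\overline{B_R^+})$, and then invoke Proposition~\ref{Proposition:2.1}(ii) on regions where $|x|$ is bounded away from $0$ so that $g(x,v)=|x|^\ell|v|^{p-1}v$ is $C^1$. Your covering arguments and appeal to interior estimates away from $\{t=0\}$ simply make explicit what the paper leaves implicit.
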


	\begin{proof}
By $u \in L^\infty_{\rm loc}  (\RN)$ and \eqref{eq:1.2}, we find some $q > N/(2s)$ such that 
$a(x) := |x|^\ell |u(x)|^{p-1} \in L^q(B_{4R})$ for each $R > 0$.  
Thus, we may apply Proposition \ref{Proposition:2.1} (i) for $U$ with $a(x)$ and 
there exists an $\alpha_R \in (0,1)$ so that $U \in C^\alpha( \overline{ B_R^+} )$.

	Next, notice that $g(x , u ) := |x|^\ell | u |^{p-1} u \in 
C^1( \overline{ B_R\setminus B_{1/R}} \times \R  )$. 
Therefore, we may apply Proposition \ref{Proposition:2.1} (ii) and obtain the desired result. 
	\end{proof}

Next we prove the following Pohozaev identity.
\begin{proposition}
\label{Proposition:2.2}
Let $u \in \Hsloc (\RN) \cap L^\infty_{\rm loc} (\RN) \cap L^1(\RN, (1+|x|)^{-N-2s}dx)  $ 
be a solution of \eqref{eq:1.1} with \eqref{eq:1.2}, and $U$ be the function given in \eqref{eq:2.3}. 
Then for all $R>0$, there holds
\begin{equation}
\label{eq:2.21}
\begin{aligned}
&
-\frac{N-2s}{2}\left[\int_{B^+_R}t^{1-2s}|\nabla U|^2\,dX
-\frac{2\kappa_s}{N-2s}\frac{N+\ell}{p+1}\int_{B_R}|x|^\ell|u|^{p+1}\,dx\right]
\\
&
\quad
+\frac{R}{2}\left[\int_{S^+_R}t^{1-2s}|\nabla U|^2\,dS
-\frac{2\kappa_s}{p+1}\int_{S_R}|x|^\ell|u|^{p+1}\,d \omega \right]
=R\int_{S^+_R}t^{1-2s}\bigg|\frac{\partial U}{\partial \nu}\bigg|^2\,dS
\end{aligned}
\end{equation}
and
\begin{equation}
\label{eq:2.22}
\int_{B^+_R}t^{1-2s}|\nabla U|^2\,dX
-\kappa_s\int_{B_R}|x|^\ell|u|^{p+1}\,dx
=\int_{S^+_R}t^{1-2s}\frac{\partial U}{\partial \nu}U\,dS.
\end{equation}
Here $\nu = X/|X|$ is the unit outer normal vector of $S_R^+$ at $X$ and 
$\kappa_s$ the constant given in \eqref{eq:2.6}.
\end{proposition}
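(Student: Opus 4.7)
The plan is to derive both identities by integration by parts in the upper half-space applied to the extension problem for $U$, with multiplier $U$ for \eqref{eq:2.22} and the Pohozaev-type multiplier $X\cdot\nabla U$ for \eqref{eq:2.21}. Lemma~\ref{Lemma:2.3} supplies the H\"older regularity of $U$, $\nabla_x U$ and $t^{1-2s}\partial_t U$ up to the boundary $t=0$ away from the singular point $x=0$, which is what justifies taking limits of boundary integrals on $\{t=\e\}$.

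For \eqref{eq:2.22}, I would multiply $-\diver(t^{1-2s}\nabla U)=0$ by $U$ on $B_R^+\cap\{t>\e\}$ (where $U\in C^\infty$) and apply the divergence theorem to obtain
\begin{equation*}
\int_{B_R^+\cap\{t>\e\}} t^{1-2s}|\nabla U|^2\,dX = \int_{S_R^+\cap\{t>\e\}} t^{1-2s}\frac{\partial U}{\partial\nu}\,U\,dS - \int_{B_R^{(\e)}} \e^{1-2s}\partial_t U(x,\e)\,U(x,\e)\,dx,
\end{equation*}
where $B_R^{(\e)}:=\{x:|x|^2+\e^2<R^2\}$. Passing to the limit $\e\to 0$, the left-hand side converges by $U\in \Hloc(\overline{\HS},t^{1-2s}dX)$, the $S_R^+$ boundary term converges (note that the weight $t^{1-2s}$ is integrable on $S_R^+$ since $s<1$), and the bottom integral converges to $\kappa_s\int_{B_R}|x|^\ell|u|^{p+1}\,dx$ by Lemma~\ref{Lemma:2.3} (locally uniformly in $\RN\setminus\{0\}$) combined with dominated convergence near $x=0$, using $u\in L^\infty_{\rm loc}$ and $\ell>-2s>-N$.

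For \eqref{eq:2.21}, I would work on $\Omega_{\e,\rho}:=(B_R^+\setminus\overline{B_\rho^+})\cap\{t>\e\}$, where $U$ is smooth, and multiply the equation by $X\cdot\nabla U$. The divergence theorem gives
\begin{equation*}
\int_{\Omega_{\e,\rho}} t^{1-2s}\nabla U\cdot\nabla(X\cdot\nabla U)\,dX = \int_{\partial\Omega_{\e,\rho}} t^{1-2s}(\hat{n}\cdot\nabla U)(X\cdot\nabla U)\,dS.
\end{equation*}
Using the pointwise identity $\nabla U\cdot\nabla(X\cdot\nabla U)=|\nabla U|^2+\tfrac12 X\cdot\nabla|\nabla U|^2$, a second weighted integration by parts with the computation $\diver(t^{1-2s}X)=(N+2-2s)t^{1-2s}$ converts the left-hand side into $-\tfrac{N-2s}{2}\int_{\Omega_{\e,\rho}} t^{1-2s}|\nabla U|^2\,dX+\tfrac12\int_{\partial\Omega_{\e,\rho}} t^{1-2s}(X\cdot\hat{n})|\nabla U|^2\,dS$. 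On $S_R^+$, where $\hat{n}=X/R$ and $X\cdot\nabla U=R\,\partial U/\partial\nu$, the surface contributions collapse to exactly $R\int_{S_R^+} t^{1-2s}|\partial U/\partial\nu|^2\,dS-\tfrac{R}{2}\int_{S_R^+} t^{1-2s}|\nabla U|^2\,dS$, while $S_\rho^+$ contributes analogous terms with $-\rho$ in place of $R$. On the bottom $\{t=\e\}\cap(B_R\setminus B_\rho)$ the leading term is $-\e^{1-2s}\partial_t U(x,\e)(x\cdot\nabla_x U(x,\e))$, which by Lemma~\ref{Lemma:2.3} tends as $\e\to 0$ to $\kappa_s|x|^\ell|u|^{p-1}u\,(x\cdot\nabla u)$; an integration by parts in $x$ on $B_R\setminus B_\rho$, using $\diver(|x|^\ell x)=(N+\ell)|x|^\ell$, converts this into the $|u|^{p+1}$ bulk integral over $B_R\setminus B_\rho$ together with surface contributions on $S_R$ and $S_\rho$. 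Sending $\rho\to 0$ and collecting terms yields \eqref{eq:2.21}.

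The main technical obstacle is showing that the degenerate boundary contributions vanish in these limits: one needs $\e^{2-2s}\int_{B_R}|\nabla U(x,\e)|^2\,dx\to 0$ as $\e\to 0$, together with $\rho\int_{S_\rho^+} t^{1-2s}|\nabla U|^2\,dS$ and $\rho\int_{S_\rho}|x|^\ell|u|^{p+1}\,d\omega$ tending to $0$ as $\rho\to 0$. I would handle this along suitable subsequences by combining the global integrability $\int_{B_R^+} t^{1-2s}|\nabla U|^2\,dX<\infty$ and $\int_{B_R}|x|^\ell|u|^{p+1}\,dx<\infty$ with the elementary observation that any $f\in L^1(0,R)$ satisfies $\liminf_{\tau\to 0}\tau f(\tau)=0$ (otherwise $f(\tau)\ge c/\tau$ for small $\tau$ would contradict integrability). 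Applied to the relevant layer-cake and co-area representations, this yields sequences $\e_k,\rho_k\to 0$ along which all degenerate terms vanish, so that the identity passes cleanly to the limit.
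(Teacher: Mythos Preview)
Your plan for \eqref{eq:2.21} is essentially the paper's proof: the paper packages the two integrations by parts you describe into the single divergence identity
\[
\frac{N-2s}{2}\,t^{1-2s}|\nabla U|^2=\diver\left(\tfrac{1}{2}t^{1-2s}|\nabla U|^2\,X-t^{1-2s}(X\cdot\nabla U)\nabla U\right),
\]
integrates it over $(B_R^+\setminus \overline{B_\rho^+})\cap\{t>\delta\}$, and then passes to the limit along subsequences $\delta_n\to 0$ and $\rho_n\to 0$ chosen by exactly the $L^1$ argument you outline.

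For \eqref{eq:2.22}, however, your ``dominated convergence near $x=0$'' step is not justified. Lemma~\ref{Lemma:2.3} gives H\"older control of $t^{1-2s}\partial_t U$ only on sets of the form $\overline{(B_R\setminus B_{1/R})\times(0,R)}$, i.e.\ away from the origin; there is no integrable majorant for $\e^{1-2s}\partial_t U(x,\e)\,U(x,\e)$ on a fixed neighbourhood of $x=0$ uniform in $\e$. Indeed, if $\ell<0$ the pointwise limit $\kappa_s|x|^\ell|u|^{p+1}$ is unbounded near $0$, so any such majorant would have to be genuinely singular, and nothing in the hypotheses produces one. The paper avoids this issue by a different route: it first proves, for every $\varphi\in C^\infty(\overline{B_R^+})$, the tested identity
\[
\int_{B_R^+}t^{1-2s}\nabla U\cdot\nabla\varphi\,dX=\int_{S_R^+}t^{1-2s}\frac{\partial U}{\partial\nu}\,\varphi\,dS+\kappa_s\int_{B_R}|x|^\ell|u|^{p-1}u\,\varphi(x,0)\,dx,
\]
handling the bottom limit near $x=0$ via Remark~\ref{Remark:2.2} (which rests on the Poisson-kernel representation, not on Lemma~\ref{Lemma:2.3}) by splitting $\varphi=\zeta\varphi+(1-\zeta)\varphi$ with $\zeta\in C^\infty_c(\overline{B_{R/2}^+})$, $\zeta\equiv 1$ on $B_{R/4}^+$; it then sets $\varphi=U$ by approximation in $H^1(B_R^+,t^{1-2s}dX)$. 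An alternative repair of your direct argument is to excise the origin in \eqref{eq:2.22} as well, work on $(B_R^+\setminus\overline{B_\rho^+})\cap\{t>\e\}$, and show the extra boundary term $\int_{S_\rho^+}t^{1-2s}\frac{\partial U}{\partial\nu}\,U\,dS$ vanishes along a subsequence $\rho_n\to 0$ using the boundedness of $U$, Cauchy--Schwarz, and your $L^1$ trick applied to $\rho\mapsto\int_{S_\rho^+}t^{1-2s}|\nabla U|^2\,dS$.
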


	\begin{proof}
We follow the argument in \cite[Proof of Theorem~3.7]{FF}. 
Let $u$ be a solution of \eqref{eq:1.1} and $U$ the function given in \eqref{eq:2.3}, and 
we take any $R>0$. 
Then, by \eqref{eq:2.4} we have
\begin{equation}
\label{eq:2.23}
\frac{N-2s}{2}t^{1-2s}|\nabla U|^2
= \diver \left( \frac{1}{2}t^{1-2s}|\nabla U|^2X-t^{1-2s}(X\cdot\nabla U)\nabla U \right)
\end{equation}
for $X\in B^+_R$.
Let $\rho<R$.
Then, integrating \eqref{eq:2.23} over the set
	\[
\mathcal O_\delta:= (B^+_R\setminus\overline{B^+_\rho})\cap
\Set{X = (x,t) \in \mathbb R^{N+1}_+ | t>\delta} \quad 
	\]
with $\delta \in (0,\rho)$ and 
writing $B_{R,\rho, \delta} := B_{ \sqrt{R^2-\delta^2}  } \setminus B_{ \sqrt{\rho^2 - \delta^2} } \subset \RN$, 
we have
\begin{equation}
\label{eq:2.24}
\begin{aligned}
\frac{N-2s}{2}\int_{\mathcal O_\delta}t^{1-2s}|\nabla U|^2\,dX
&=  \int_{\mathcal O_\delta} \diver \left( \frac{1}{2}t^{1-2s}|\nabla U|^2X-t^{1-2s}
(X\cdot\nabla U)\nabla U \right)\,dX
\\
&= -\frac{1}{2}\delta^{2-2s}\int_{ B_{R,\rho,\delta} }
|\nabla U(x,\delta)|^2\,dx
+\delta^{2-2s}\int_{ B_{R,\rho,\delta} }  |U_t(x,\delta)|^2\,dx
\\
& \quad 
+\delta^{1-2s}\int_{ B_{R,\rho,\delta} }  
(x\cdot\nabla_xU(x,\delta))U_t(x,\delta)\,dx
\\
& \quad +\frac{R}{2}\int_{S^+_R\cap [t>\delta] }t^{1-2s}|\nabla U|^2\,dS
-R\int_{S^+_R\cap [t>\delta ]}t^{1-2s}\bigg|\frac{\partial U}{\partial\nu}\bigg|^2\,dS
\\
&
\quad 
-\frac{\rho}{2}\int_{S^+_\rho\cap [t>\delta] }t^{1-2s}|\nabla U|^2\,dS
+\rho\int_{S^+_\rho\cap [t>\delta] }t^{1-2s}\bigg|\frac{\partial U}{\partial\nu}\bigg|^2\,dS.
\end{aligned}
\end{equation}
Now we claim that there exists a sequence $\delta_n\to0$ such that
\begin{equation}
\label{eq:2.25}
\lim_{n\to\infty}\bigg[\frac{1}{2}\delta_n^{2-2s}\int_{B_R}|\nabla U(x,\delta_n)|^2\,dx
+\delta_n^{2-2s}\int_{B_R}|U_t(x,\delta_n)|^2 \,dx \bigg]=0.
\end{equation}
In fact, if there is no such sequence, 
then there exists a $C>0$ such that
	\[
\liminf_{\delta\to0}\bigg[\frac{1}{2}\delta^{2-2s}\int_{B_R}|\nabla U(x,\delta)|^2\,dx
+\delta^{2-2s}\int_{B_R}|U_t(x,\delta)|^2 \, dx \bigg]\ge C
	\]
and thus there exists a $\delta_0>0$ such that for all $\delta \in (0,\delta_0)$, 
	\[
\frac{1}{2}\delta^{1-2s}\int_{B_R}|\nabla U(x,\delta)|^2\,dx
+\delta^{1-2s}\int_{B_R}|U_t(x,\delta)|^2 \, dx \ge\frac{C}{2\delta}. 
	\]
Since $U\in \Hloc ( \overline{\mathbb R^{N+1}_+} ,t^{1-2s}dX)$, 
integrating the above inequality in $\delta$ over $(0,\delta_0)$, we have a contradiction 
and \eqref{eq:2.25} holds.

On the other hand, by Lemma \ref{Lemma:2.3}, we see that
\begin{equation}
\label{eq:2.26}
\begin{aligned}
\lim_{\delta\to0}\delta^{1-2s}\int_{ B_{R,\rho,\delta} }
(x\cdot\nabla_xU(x,\delta))U_t(x,\delta)\,dx
=-\kappa_s\int_{B_R\setminus B_\rho}(x\cdot\nabla_xu)|x|^\ell|u|^{p-1}u\,dx.
\end{aligned}
\end{equation}
By \eqref{eq:2.24}--\eqref{eq:2.26} 
and replacing $\mathcal O_\delta$ with $\mathcal O_{\delta_n}$ for a sequence $\delta_n\to 0$, 
we conclude that 
\begin{equation}
\label{eq:2.27}
\begin{aligned}
\hspace{-5pt}
\frac{N-2s}{2}\int_{B^+_R\setminus B^+_\rho}t^{1-2s}|\nabla U|^2\,dX
&
=
-\kappa_s\int_{B_R\setminus B_\rho}(x\cdot\nabla_xu)|x|^\ell|u|^{p-1}u\,dx
\\
&
\qquad
+\frac{R}{2}\int_{S^+_R}t^{1-2s}|\nabla U|^2\,dS
-R\int_{S^+_R}t^{1-2s}\left|\frac{\partial U}{\partial\nu}\right|^2\,dS
\\
&
\qquad\quad
-\frac{\rho}{2}\int_{S^+_\rho}t^{1-2s}|\nabla U|^2\,dS
+\rho\int_{S^+_\rho}t^{1-2s}\left|\frac{\partial U}{\partial\nu}\right|^2\,dS.
\end{aligned}
\end{equation}
Furthermore, integration by parts yields
\begin{equation}
\label{eq:2.28}
\begin{aligned}
&
- \kappa_s \int_{B_R\setminus B_\rho}(x\cdot\nabla_xu)|x|^\ell|u|^{p-1}u\,dx
\\
= \ & - \kappa_s  \int_{B_R\setminus B_\rho}|x|^\ell x\cdot\nabla_x \left(\frac{|u|^{p+1}}{p+1} \right)\,dx
\\
= \ & \kappa_s \frac{N+\ell}{p+1}\int_{B_R\setminus B_\rho}|x|^\ell|u|^{p+1}\,dx
\\
&\hspace{3cm}
-\frac{\kappa_s}{p+1}\int_{S_R}|x|^{\ell+1}|u|^{p+1}\, d \omega
+\frac{\kappa_s}{p+1}\int_{S_\rho}|x|^{\ell+1}|u|^{p+1}\,d \omega.
\end{aligned}
\end{equation}
Similarly to \eqref{eq:2.25},
since $U\in \Hloc ( \overline{\mathbb R^{N+1}_+} ,t^{1-2s}dX)$ and $u\in L^\infty_{\rm loc}(\mathbb R^N)$,
by \eqref{eq:1.2},  
we may prove that there exists a sequence $\rho_n\to0$ such that
	\[
\lim_{n\to\infty}\left[\rho_n\int_{S^+_{\rho_n}}t^{1-2s}|\nabla U|^2\,dS
+\rho_n\int_{S^+_{\rho_n}}t^{1-2s}\left|\frac{\partial U}{\partial\nu}\right|^2\,dS
+\int_{S_{\rho_n}}|x|^{\ell+1}|u|^{p+1}\,d \omega \right]=0.
	\]
Therefore, taking $\rho=\rho_n$ and letting $n\to\infty$ in \eqref{eq:2.27} and \eqref{eq:2.28},
we have \eqref{eq:2.21}.

	On the other hand, since 
	\[
		- \mathrm{div} \left( t^{1-2s} \nabla U \right) \varphi = 0 \quad 
		\text{in} \ \widetilde{\mathcal{O}}_\delta := B_R^+ \cap \Set{ X \in \HS | t > \delta }
	\]
for any $\varphi \in C^\infty ( \overline{ B_R^+ } )$, integration by parts gives 
	\[
		\int_{  \widetilde{\mathcal{O}}_\delta } 
		t^{1-2s} \nabla U \cdot \nabla \varphi \, dX 
		= \int_{S_R^+ \cap [ t > \delta ] } t^{1-2s} \frac{\partial U}{\partial \nu} \varphi \, d S 
		- \int_{B_{\sqrt{R^2-\delta^2}}} \delta^{1-2s} \frac{\partial U}{\partial t} (x,\delta) 
		\varphi (x,\delta) \, d x.
	\]
For the last term, by decomposing $\varphi$ into 
$\varphi (x,t) = \zeta(x,t) \varphi (x,t) + (1-\zeta(x,t)) \varphi (x,t)$ 
where $\zeta \in C^\infty_c( \overline{B_{R/2}^+} )$ with $\zeta \equiv 1$ on $B_{R/4}^+$ and 
noting $\zeta(x,t) \varphi(x,t) \in C^\infty_{c} ( \overline{ \HS } )$, 
Remark \ref{Remark:2.2} and Lemma \ref{Lemma:2.3} yield 
	\[
		- \int_{B_{\sqrt{R^2-\delta^2}}} \delta^{1-2s} \frac{\partial U}{\partial t} (x,\delta) 
		\varphi (x,\delta) \, d x \to 
		\kappa_s \int_{B_R} |x|^\ell |u|^{p-1} u \varphi (x,0) \, dx 
		\quad \text{as $\delta \to 0$}.
	\]
Therefore, for every $\varphi\in C^\infty(\overline{B^+_R})$,
	\[
\int_{B^+_R}t^{1-2s}\nabla U\cdot\nabla\varphi\,dX
=\int_{S^+_R}t^{1-2s}\frac{\partial U}{\partial\nu}\varphi\,dS
+\kappa_s\int_{B_R}|x|^\ell|u|^{p-1}u \varphi(x,0) \,dx.
	\]
From the fact that $U \in \Hloc (\overline{\HS},t^{1-2s}dX)$ can be approximated by functions of 
$C^\infty(\overline{B^+_R})$ in the $H^1(B^+_R,t^{1-2s}dX)$ sense, 
by setting $\varphi = U$ in the above, 
we obtain \eqref{eq:2.22} and Proposition~\ref{Proposition:2.2} follows.
\end{proof}
%

%%%%%%%%%%%%%%%%%%%%%%%%%%%%%%%%%%%%%
%%%%%%%%%%%%%%%%%%%%%%%%%%%%%%%%%%%%%
\subsection{Energy estimates}
\label{section:2.3}
%%%%%%%%%%%%%%%%%%%%%%%%%%%%%%%%%%%%%
%%%%%%%%%%%%%%%%%%%%%%%%%%%%%%%%%%%%%

We first show several lemmata by following \cite[Lemma~2.2 and Corollary~2.3]{DDW}. 
\begin{lemma}
\label{Lemma:2.4}
For $\zeta \in W^{1,\infty}(\RN) \cap H^1(\RN) $, define 
	\begin{equation}
	\label{eq:2.29}
	\rho(\zeta; x):=\int_{\mathbb R^N}\frac{(\zeta(x)-\zeta(y))^2}{|x-y|^{N+2s}}\, dy.
	\end{equation}
Then there exists a $C = C(N,s)>0$ such that
	\begin{equation}\label{eq:2.30}
		\begin{aligned}
			\rho \left( \zeta ; x \right) 
			&\leq 
			C 
			\left\{ \left( 1 + |x| \right)^2 \| \nabla \zeta \|_{L^{\infty} (\Omega(|x|)) }^2 
			+ \| \zeta \|_{L^\infty( \Omega (|x|) )}^2 
			\right.
			\\
			& \hspace{3cm} 
			\left.+ \left( 1 + |x| \right)^{-N} \| \zeta \|_{L^2(\RN)}^2
			\right\}  \left( 1 + |x| \right)^{-2s} 
			\qquad \text{for all $|x| \geq 1$}, 
		\end{aligned}
	\end{equation}
and 
	\begin{equation}\label{eq:2.31}
		\rho \left( \zeta ; x\right) \leq 
		C \left( 1 + \| \zeta \|_{W^{1,\infty} (\RN) }^2 \right) 
		\quad \text{for all $|x| \leq 1$}
	\end{equation}
where 
	\[
		\Omega (|x|) := \Set{ y \in \RN | \, |y| \geq \frac{|x|}{2} }.
	\]
\end{lemma}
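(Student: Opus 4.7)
The plan is to decompose $\rho(\zeta;x)=I_1+I_2$, where $I_1$ integrates over a ``near diagonal'' ball centered at $x$ and $I_2$ over its complement, with the splitting radius chosen adaptively to $|x|$. On the near region I would use the Lipschitz bound $|\zeta(x)-\zeta(y)|\le\|\nabla\zeta\|_{L^\infty}|x-y|$, reducing the integrand to $\|\nabla\zeta\|_{L^\infty}^2|x-y|^{2-N-2s}$, which is integrable near the diagonal and produces a factor proportional to (radius)$^{2-2s}$. On the complement I would use the elementary bound $(\zeta(x)-\zeta(y))^2\le 2\zeta(x)^2+2\zeta(y)^2$: the $\zeta(x)^2$ term pulls out and leaves $\int_{|x-y|>r}|x-y|^{-N-2s}dy\sim r^{-2s}$, while the $\zeta(y)^2$ term is absorbed into an appropriate norm of $\zeta$.

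For the bound \eqref{eq:2.30} with $|x|\ge 1$, the natural splitting radius is $|x|/2$: on $\{|x-y|\le|x|/2\}$ one has $|y|\ge|x|/2$, hence $y\in\Omega(|x|)$, so the Lipschitz estimate may be performed with $\|\nabla\zeta\|_{L^\infty(\Omega(|x|))}$, yielding the first term of \eqref{eq:2.30} after noting $|x|\sim 1+|x|$. For the $\zeta(y)^2$ contribution on $\{|x-y|>|x|/2\}$ I would split a second time: when $|y|\ge|x|/2$ the tail integral $\int_{|x-y|>|x|/2}|x-y|^{-N-2s}dy\sim|x|^{-2s}$ combined with $\|\zeta\|_{L^\infty(\Omega(|x|))}^2$ gives the second term, and when $|y|<|x|/2$ one has uniformly $|x-y|\ge|x|/2$, so I can pull out $(|x|/2)^{-N-2s}$ and use $\int\zeta(y)^2dy\le\|\zeta\|_{L^2(\RN)}^2$ to obtain the third term. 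The $\zeta(x)^2$ half of the complement contribution is absorbed into the $\|\zeta\|_{L^\infty(\Omega(|x|))}^2$ term since $x\in\Omega(|x|)$ trivially.

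For \eqref{eq:2.31}, i.e.\ $|x|\le 1$, I would use the fixed splitting radius $1$ together with the global norms $\|\nabla\zeta\|_{L^\infty(\RN)}$ and $\|\zeta\|_{L^\infty(\RN)}$: the near-diagonal integral contributes $C\|\nabla\zeta\|_{L^\infty}^2$ and the far integral $C\|\zeta\|_{L^\infty}^2$, both controlled by $\|\zeta\|_{W^{1,\infty}(\RN)}^2$. The only bookkeeping point is that the constant $C$ depends only on $N$ and $s$; this is immediate because, in polar coordinates, the only radial integrals that appear are $\int_0^R r^{1-2s}dr$ (integrable since $s<1$) and $\int_R^\infty r^{-1-2s}dr$ (integrable since $s>0$), so no further structure on $\zeta$ beyond the hypotheses is required. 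I do not anticipate a genuinely hard step; the main care is in the second dichotomy $\{|y|\gtrless|x|/2\}$ that produces the $L^\infty(\Omega(|x|))$ versus $L^2(\RN)$ dichotomy in \eqref{eq:2.30}.
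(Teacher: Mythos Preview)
Your proposal is correct and follows essentially the same strategy as the paper. The only cosmetic difference is in the bookkeeping of the far region: the paper partitions $\{|x-y|>|x|/2\}$ into the two annuli $D_2=\{|x|/2\le|x-y|\le 2|x|\}$ and $D_3=\{|x-y|\ge 2|x|\}$ (using $L^2$ on $D_2$ and $L^\infty(\Omega(|x|))$ on $D_3$), whereas you split it according to $|y|\gtrless|x|/2$; the resulting estimates are identical.
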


	\begin{proof}
The proof is basically the same as the one in \cite[Lemma 2.2]{DDW}. 
We treat the case $|x| \geq 1$ and put 
	\[
		\begin{aligned}
			D_1 &:= \Set{ y \in \RN | \, |y-x| \leq \frac{|x|}{2} }, 
			& D_2 &:= \Set{ y \in \RN | \, \frac{|x|}{2} \leq |y-x| \leq 2 |x| },
			\\
			D_3 &:= \Set{ y \in \RN | \, 2|x| \leq |y-x| }. 
		\end{aligned}
	\]
For $D_1$, 
notice that $D_{1}$ is convex and $D_1 \subset \Omega (|x|)$. 
Since it follows from $\zeta \in W^{1,\infty} (\RN)$ that 
\[
|\zeta(x)-\zeta(y)|\le \| \nabla \zeta \|_{L^\infty( D_1 )} |x-y| 
\leq \| \nabla \zeta \|_{L^\infty (\Omega (|x|))  } |x-y| ,
\]
we have
\begin{equation}
\label{eq:2.32}
\begin{aligned}
\int_{D_1}\frac{(\zeta(x)-\zeta(y))^2}{|x-y|^{N+2s}}\,dy
&
\le C \| \nabla \zeta \|_{L^\infty (\Omega (|x|) )}^2 \int_{D_1} |x-y|^{2-N-2s}\,dy 
\\
&
\leq C \| \nabla \zeta \|_{L^\infty (\Omega(|x|))}^2 (1+|x|)^{2-2s}. 
\end{aligned}
\end{equation}
For $y\in D_2$, by $|x| \geq 1$,  it holds that
\begin{equation}
\label{eq:2.33}
\begin{aligned}
\int_{D_2}\frac{(\zeta(x)-\zeta(y))^2}{|x-y|^{N+2s}}\,dy
&
\le  
C|x|^{-N-2s}\int_{|y|\le 3|x|}(\zeta(x)^2+\zeta(y)^2)\, dy
\\
&
\leq 
C \zeta(x)^2 |x|^{-2s} + C |x|^{-N-2s} \| \zeta \|_{L^2(\RN)}^2 
\\
&\leq 
C \| \zeta \|_{L^\infty (\Omega(|x|))}^2 (1+|x|)^{-2s} + C \| \zeta \|_{L^2(\RN)}^2 (1+|x|)^{-N-2s}.
\end{aligned}
\end{equation}
For $y \in D_3$, since $|y| \geq |x|$, we have
\begin{equation}\label{eq:2.34}
\begin{aligned}
\int_{D_3}\frac{(\zeta(x)-\zeta(y))^2}{|x-y|^{N+2s}}\,dy
&
\le C \| \zeta \|_{L^\infty (\Omega (|x|))}^2 \int_{D_3}|x-y|^{-N-2s}\, dy 
\\
&
\leq C \| \zeta \|_{L^\infty (\Omega (|x|))}^2 (1+|x|)^{-2s}.
\end{aligned}
\end{equation}
Putting \eqref{eq:2.32}--\eqref{eq:2.34} together, we have \eqref{eq:2.30} for $|x| \geq 1$.

	On the other hand, for $|x| \leq 1$, 
by dividing $\RN$ into $B_2$ and $B_2^c$, and arguing as in \eqref{eq:2.32} and \eqref{eq:2.34}, 
we obtain \eqref{eq:2.31}. We omit the details and Lemma \ref{Lemma:2.4} holds. 
\end{proof}

\begin{lemma}
\label{Lemma:2.5}
For $m>N/2$, set 
	\begin{equation}
		\label{eq:2.35}
		\eta(x) :=(1+|x|^2)^{-\frac{m}{2}}. 
	\end{equation}
Let $R\ge R_0\ge1$ and 
$\psi\in C^\infty(\mathbb R^N)$ such that $0\le \psi\le 1$, $\psi\equiv 0$ on $B_1$ 
and $\psi\equiv 1$ on $B_2^c$. 
Define $\eta_R$ and $\rho_R$ by 
\begin{equation}
\label{eq:2.36}
\eta_R(x) :=\eta\bigg(\frac{x}{R}\bigg)\psi\bigg(\frac{x}{R_0}\bigg), \quad 
\rho_R(x) := \rho \left( \eta_R; x \right). 
\end{equation}
Then there exists a constant $C=C(N,s,m,R_0)>0$ such that
\begin{equation}
\label{eq:2.37}
\rho_R(x) \leq \left\{\begin{aligned} 
	& C\eta \left(\frac{x}{R} \right)^2|x|^{-N-2s}+2R^{-2s}\rho \left(\eta; \frac{x}{R}\right) 
	& & \text{if} \ |x|\ge 3R_0,\\
	& C  + 2 R^{-2s} \rho \left( \eta ; \frac{x}{R} \right)
	& & \text{if} \ |x| \leq 3R_0.
	\end{aligned}\right.
\end{equation}
\end{lemma}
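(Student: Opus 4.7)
Since $\eta_R(x) = \eta(x/R)\,\psi(x/R_0)$ is a product, my plan is to split $\rho_R(x)$ by the identity
$$
\eta_R(x)-\eta_R(y) = \eta(x/R)\bigl[\psi(x/R_0)-\psi(y/R_0)\bigr] + \psi(y/R_0)\bigl[\eta(x/R)-\eta(y/R)\bigr],
$$
followed by $(a+b)^2 \leq 2a^2+2b^2$ and the bound $0 \leq \psi \leq 1$. After the changes of variable $y = R_0 z$ in the first integral and $y = R z$ in the second, this yields
$$
\rho_R(x) \leq 2\,\eta(x/R)^2\, R_0^{-2s}\,\rho(\psi; x/R_0) \;+\; 2R^{-2s}\,\rho(\eta; x/R).
$$
The second summand is already the common term that appears in \eqref{eq:2.37}, so everything reduces to controlling $\rho(\psi; w)$ uniformly in $w$ and exhibiting the correct decay at infinity.

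To estimate $\rho(\psi; w)$, I would distinguish two regimes. For $|w| \geq 3$, the definition of $\psi$ gives $\psi(w)=1$ and $\psi(w)-\psi(y) = 1-\psi(y)$, which vanishes outside $B_2$; since $|w-y| \geq |w|/3$ on $\{|y|\leq 2\}$, one obtains $\rho(\psi; w) \leq C|w|^{-N-2s}$. For $|w| \leq 3$, I would split the domain into $\{|y-w|\leq 1\}$ and $\{|y-w|>1\}$: the inner piece is controlled by the Lipschitz bound $|\psi(w)-\psi(y)| \leq \|\nabla\psi\|_{L^\infty}|w-y|$ together with $\int_{|y-w|\leq 1}|w-y|^{2-N-2s}\,dy < \infty$, while the outer piece is bounded by $\int_{|y-w|>1} 4\,|w-y|^{-N-2s}\,dy < \infty$, giving a uniform estimate $\rho(\psi; w) \leq C$.

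Substituting $w = x/R_0$, the regime $|x| \geq 3R_0$ yields $2\,\eta(x/R)^2 R_0^{-2s}\rho(\psi; x/R_0) \leq C R_0^{N}\,\eta(x/R)^2|x|^{-N-2s}$ (absorbing the scaling factor $R_0^{-2s}\cdot R_0^{N+2s}$ into $C(R_0)$), while $|x| \leq 3R_0$ yields a bound $\leq C(R_0)$ using $\eta \leq 1$ and $R_0^{-2s}\leq 1$. Combined with the $2R^{-2s}\rho(\eta; x/R)$ term this produces \eqref{eq:2.37}. The one mildly subtle point is that $\psi \equiv 1$ on $B_2^c$, so $\psi \notin L^2(\RN)$ and Lemma~\ref{Lemma:2.4} cannot be quoted verbatim for $\rho(\psi;\cdot)$; one either argues by direct integration as above, or applies Lemma~\ref{Lemma:2.4} to $1-\psi \in C^\infty_c(\RN)$ after observing that $\rho(\psi;\cdot) = \rho(1-\psi;\cdot)$.
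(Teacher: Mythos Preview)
Your proposal is correct and follows essentially the same approach as the paper: the same product splitting $\eta_R(x)-\eta_R(y)=\eta(x/R)[\psi(x/R_0)-\psi(y/R_0)]+\psi(y/R_0)[\eta(x/R)-\eta(y/R)]$, the same use of $(a+b)^2\le 2a^2+2b^2$ and $0\le\psi\le1$, and the same two-regime estimate for the $\psi$-integral (support argument for large $|x|$, Lipschitz/near--far split for small $|x|$). The only cosmetic difference is that you first rescale to $\rho(\psi;x/R_0)$ and then estimate, whereas the paper estimates the unscaled integral $\int_{\RN}(\psi(x/R_0)-\psi(y/R_0))^2|x-y|^{-N-2s}\,dy$ directly; your remark that one may also appeal to Lemma~\ref{Lemma:2.4} via $\rho(\psi;\cdot)=\rho(1-\psi;\cdot)$ is a nice alternative the paper does not spell out.
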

\begin{remark}
\label{Remark:2.3}
	 By Lemma \ref{Lemma:2.4} and \cite[Lemma 2.2]{DDW}, there exist two constants $c,C>0$ such that 
	 	\begin{equation*}
	 		c \left( 1 + |x| \right)^{-N-2s} \leq 
	 		\rho (\eta ;x) \leq C \left( 1 + |x| \right)^{-N-2s} \quad \text{for all $x \in \RN$}.
	 	\end{equation*}
	 In addition, later (see \eqref{eq:2.58}) we shall also prove that 
	 for all sufficiently large $R>0$ and $x \in \RN$, 
	 	\[
	 		 0 < c_R (1+|x|)^{-N-2s} \leq \rho_R(x).
	 	\]
\end{remark}

	\begin{proof}[Proof of Lemma \ref{Lemma:2.5}]
Applying Young's inequality with the definition of $\eta_R$,
we have
	\begin{equation}\label{eq:2.38}
	\begin{aligned}
		&\rho_R(x)
		 \leq 2 \eta\left( \frac{x}{R} \right)^2 
		\int_{\RN} \frac{\left( \psi \left( \frac{x}{R_0} \right) - \psi \left( \frac{y}{R_0} \right) 
			\right)^2}{|x-y|^{N+2s}} \, dy 
			\\
			&\hspace{4cm}
		+ 2 \int_{\RN} \psi \left( \frac{y}{R_0} \right)^2 
		\frac{ \left( \eta \left( \frac{x}{R} \right) - \eta \left( \frac{y}{R} \right) \right)^2  }{|x-y|^{N+2s}} \, dy.
		\end{aligned}
	\end{equation}
For the first term, if $|x|\ge 3R_0$, then $|x-y| \geq |x|/3$ for any $ y \in B_{2R_0}$ and 
	\[
		\int_{\RN} \frac{\left( \psi \left( \frac{x}{R_0} \right) - \psi \left( \frac{y}{R_0} \right) 
			\right)^2}{|x-y|^{N+2s}} \, dy 
		\leq \int_{B_{2R_0}} |x-y|^{-N-2s} \, dy \leq C_{R_0} |x|^{-N-2s}
	\]
and if $|x| \leq 3R_0$, then we see 
	\[
		\begin{aligned}
			\int_{\RN} \frac{\left( \psi \left( \frac{x}{R_0} \right) - \psi \left( \frac{y}{R_0} \right) 
			\right)^2}{|x-y|^{N+2s}} \, dy 
			&\leq 
			\left( \int_{B_{R_0} (x) } + \int_{B_{R_0}^c (x)} \right)
			\frac{\left( \psi \left( \frac{x}{R_0} \right) - \psi \left( \frac{y}{R_0} \right) 
			\right)^2}{|x-y|^{N+2s}} \, dy 
			\\
			&\leq R_0^{-2} \| \psi \|_{C^1(\RN)}^2 \int_{|z| \leq R_0}  |z|^{-N-2s+2} \, dz 
			 + \int_{|z| \geq R_0} |z|^{-N-2s} \, dz.
 		\end{aligned}
	\]
Since 
	\[
		\int_{\RN} \psi \left( \frac{y}{R_0} \right)^2 
		\frac{ \left( \eta \left( \frac{x}{R} \right) - \eta \left( \frac{y}{R} \right) \right)^2  }{|x-y|^{N+2s}} \, dy
		\leq 
		\int_{\RN}
		\frac{ \left( \eta \left( \frac{x}{R} \right) - \eta \left( \frac{y}{R} \right) \right)^2  }{|x-y|^{N+2s}} \, dy
		= R^{-2s} \rho \left( \eta ; \frac{x}{R} \right),
	\]
by \eqref{eq:2.38}, we have \eqref{eq:2.37}. 
	\end{proof}

\begin{lemma}
\label{Lemma:2.6}
Let $u \in \Hsloc (\RN) \cap L^\infty_{\rm loc} (\RN) \cap L^2(\RN,(1+|x|)^{-N-2s}dx) $ 
be a solution of \eqref{eq:1.1} under \eqref{eq:1.2}. 
Assume that $u$ is stable outside $B_{R_0}$. 
Let $\zeta\in C^1(\RN)$ satisfy $\zeta \equiv 0 $ on $B_{R_0}$ and 
	\begin{equation}\label{eq:2.39}
		|x| \left| \nabla \zeta (x) \right| + |\zeta (x)| \leq C\left( 1+|x| \right)^{- m } 
		\quad \text{for all $x \in \RN$}
	\end{equation}
for some $m > N/2$. Then
\begin{equation}
\label{eq:2.40}
\int_{\mathbb R^N}|x|^{\ell}|u|^{p+1}\zeta^2\, dx
+\frac{1}{p}\|u\zeta\|_{\dot H^s(\mathbb R^N)}^2 
\le \frac{C_{N,s}}{p-1}\int_{\mathbb R^N}u (x)^2\rho(\zeta;x)\, dx
\end{equation}
where $C_{N,s}$ is the constant given by \eqref{eq:1.3}. 
\end{lemma}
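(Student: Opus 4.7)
The plan is to extend the Farina-type energy estimate of \cite[Lemma~2.2 and Corollary~2.3]{DDW} to this weighted weak setting by combining stability with the weak formulation \eqref{eq:1.4}, testing respectively against suitable approximations of $u\zeta$ and $u\zeta^2$.

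\textbf{Step 1 (cutoff and admissibility).} I would fix $\chi_n\in C^1_c(\RN)$ with $\chi_n\equiv 1$ on $B_n$, $\chi_n\equiv 0$ on $B_{2n}^c$ and $|\nabla\chi_n|\le C/n$, and set $\zeta_n:=\zeta\chi_n\in C^1_c(\RN)$. For $n\ge R_0$, $\zeta_n\equiv 0$ on $B_{R_0}$, so $u\zeta_n$ and $u\zeta_n^2$ have compact support contained in $B_{2n}\setminus\overline{B_{R_0}}$ and belong to $H^s(\RN)$ (using $u\in\Hsloc(\RN)$). Mollifying them by kernels of radius $<R_0/2$ produces approximations in $C^\infty_c(\RN\setminus\overline{B_{R_0/2}})$ which converge in $H^s(\RN)$; the right-hand sides of \eqref{eq:1.4} and \eqref{eq:1.10} converge as well, since $u\in L^\infty_{\rm loc}(\RN)$ and $|x|^\ell$ is bounded on the annulus. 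Combined with the bilinear bound \eqref{eq:2.1} and Remark~\ref{Remark:1.3}(1), this lets me use $u\zeta_n^2$ as a test function in \eqref{eq:1.4} and $u\zeta_n$ in \eqref{eq:1.10}.

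\textbf{Step 2 (algebraic identity and combination).} The pointwise identity
\[
(a\alpha^2-b\beta^2)(a-b)=(a\alpha-b\beta)^2-ab(\alpha-\beta)^2
\]
with $a=u(x)$, $b=u(y)$, $\alpha=\zeta_n(x)$, $\beta=\zeta_n(y)$, combined with $-2ab=(a-b)^2-a^2-b^2$ and symmetrization in $(x,y)$, yields
\[
\la u,u\zeta_n^2\ra_{\dHs(\RN)}=\|u\zeta_n\|_{\dHs(\RN)}^2+\mathcal R_n-\frac{C_{N,s}}{2}\int_{\RN}u^2\rho(\zeta_n;x)\,dx,
\]
where $\mathcal R_n:=\tfrac{C_{N,s}}{4}\iint\tfrac{(u(x)-u(y))^2(\zeta_n(x)-\zeta_n(y))^2}{|x-y|^{N+2s}}\,dx\,dy\ge 0$. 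By \eqref{eq:1.4} the left-hand side equals $\int|x|^\ell|u|^{p+1}\zeta_n^2\,dx$, so dropping $\mathcal R_n$ gives an upper bound on $\|u\zeta_n\|_{\dHs(\RN)}^2$. Coupling this with the stability bound $p\int|x|^\ell|u|^{p+1}\zeta_n^2\,dx\le\|u\zeta_n\|_{\dHs(\RN)}^2$ (obtained by testing \eqref{eq:1.10} with $\varphi=u\zeta_n$) to eliminate $\|u\zeta_n\|_{\dHs(\RN)}^2$ gives
\[
(p-1)\int_{\RN}|x|^\ell|u|^{p+1}\zeta_n^2\,dx\le\frac{C_{N,s}}{2}\int_{\RN}u^2\rho(\zeta_n;x)\,dx,
\]
and adding $\tfrac{1}{p}$ times the stability bound produces \eqref{eq:2.40} with $\zeta_n$ in place of $\zeta$.

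\textbf{Step 3 (limit $n\to\infty$; main obstacle).} A direct computation shows that $\zeta_n$ still satisfies \eqref{eq:2.39} with the same $m>N/2$ and with constants uniform in $n$ (from $|\chi_n|\le 1$ and $|\nabla\chi_n|\le C/n$ on the annulus where $|x|\sim n$). Lemma~\ref{Lemma:2.4} then yields the $n$-uniform pointwise bound $\rho(\zeta_n;x)\le C(1+|x|)^{-N-2s}$. The hypothesis $u\in L^2(\RN,(1+|x|)^{-N-2s}dx)$ therefore dominates the right-hand side of the $\zeta_n$-version of \eqref{eq:2.40} uniformly in $n$, and dominated convergence, together with $\zeta_n\to\zeta$ pointwise and consequent $\rho(\zeta_n;\cdot)\to\rho(\zeta;\cdot)$ a.e.\ through \eqref{eq:2.29}, passes that side to the $\zeta$-limit; Fatou's lemma handles the left-hand side. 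The principal technical obstacle is precisely this uniform dominance step, together with the admissibility reductions of Step~1; both rely crucially on the vanishing $\zeta\equiv 0$ on $B_{R_0}$ (to avoid the singularity of $|x|^\ell$ at the origin) and on the weighted $L^2$-hypothesis on $u$.
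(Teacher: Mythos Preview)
Your approach is essentially the same as the paper's: the algebraic identity in Step~2 (the paper writes it as $2u(x)u(y)\le u(x)^2+u(y)^2$ applied to the cross term, which is equivalent to your $\mathcal R_n\ge 0$), the combination with stability, and the cut-off/limiting scheme in Step~3 all match; the paper uses weak lower semicontinuity of $\|\cdot\|_{\dHs}$ where you invoke Fatou, but since $u\zeta_n\to u\zeta$ pointwise these give the same conclusion.

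The one place you diverge is Step~1, and there your mollification argument has a small gap. A mollifier of radius $<R_0/2$ applied to $u\zeta_n$ produces approximants supported only in $\RN\setminus\overline{B_{R_0/2}}$; these are \emph{not} zero on $B_{R_0}$, so Remark~\ref{Remark:1.3}(1) does not make them admissible in \eqref{eq:1.10}. The paper sidesteps this entirely: by Lemma~\ref{Lemma:2.3} one has $u\in C^1(\RN\setminus\{0\})$, so for $\zeta\in C^1_c(\RN)$ with $\zeta\equiv 0$ on $B_{R_0}$ the products $u\zeta$ and $u\zeta^2$ are already in $C^1_c(\RN)$ and vanish on $B_{R_0}$, hence directly admissible in \eqref{eq:1.4} and \eqref{eq:1.10}. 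Your gap is repairable (for instance, first replace $\zeta$ by $\zeta$ times a cutoff vanishing on $B_{R_0+\delta}$; since $\zeta\in C^1$ with $\zeta=0$ on $\overline{B_{R_0}}$, the product $\zeta\cdot\nabla(\text{cutoff})$ stays bounded as $\delta\to 0$), but the paper's regularity shortcut is cleaner.
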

	\begin{remark}\label{Remark:2.4}
		Later, we shall use Lemma \ref{Lemma:2.6} for $\zeta (x) = \eta_R (x)$ 
		and we require the right-hand side of \eqref{eq:2.40} to be finite. 
		For example, see Lemma \ref{Lemma:2.7} and the end of proof of it. 
		Therefore, we need the condition $u \in L^2(\RN,(1+|x|)^{-N-2s}dx)$ 
		by Remark \ref{Remark:2.3}. 
	\end{remark}

	\begin{proof}
We follow the argument in \cite[Lemma~2.1]{DDW}. 
Remark that the right-hand side in \eqref{eq:2.40} is finite due to \eqref{eq:2.30}, \eqref{eq:2.31}, \eqref{eq:2.39} 
and $u \in L^2(\RN,(1+|x|)^{-N-2s}dx)  $. 
We first treat the case $\zeta \in C^1_c ( \RN)$ where $\zeta \equiv 0$ on $B_{R_0}$. 
Since $u$ is a solution of \eqref{eq:1.1}, by Lemma~\ref{Lemma:2.3}, 
we see that $u\in C^1(\mathbb R^N\setminus\{0\})$.
Since $u\zeta^2\in C^1_c(\mathbb R^N)$, 
by Remark~\ref{Remark:1.1} we can take $\varphi=u\zeta^2$ as a test function in \eqref{eq:1.4},
and by \eqref{eq:1.5} we have
\begin{equation*}
\begin{aligned}
\int_{\mathbb R^N}|x|^\ell|u|^{p+1}\zeta^2\,dx
&= \la u , u \zeta^2 \ra_{\dHs(\RN)} 
\\
&
=\frac{C_{N,s}}{2}
\int_{\mathbb R^N \times \RN} \frac{(u(x)-u(y))(u(x)\zeta(x)^2-u(y)\zeta(y)^2)}{|x-y|^{N+2s}}\,dx\,dy
\\
&
=\frac{C_{N,s}}{2}\int_{\mathbb R^N \times \RN }
\frac{u(x)^2\zeta(x)^2-u(x)u(y)(\zeta(x)^2+\zeta(y)^2)+u(y)^2\zeta(y)^2}{|x-y|^{N+2s}}\,dx\,dy
\\
&
=\frac{C_{N,s}}{2}\int_{\mathbb R^N \times \RN}
\frac{(u(x)\zeta(x)-u(y)\zeta(y))^2-(\zeta(x)-\zeta(y))^2u(x)u(y)}{|x-y|^{N+2s}}\,dx\,dy
\\
&
=\|u\zeta\|^2_{\dot H^s(\mathbb R^N)}-\frac{C_{N,s}}{2}\int_{\mathbb R^N\times \RN}
\frac{(\zeta(x)-\zeta(y))^2u(x)u(y)}{|x-y|^{N+2s}}\,dx\,dy.
\end{aligned}
\end{equation*}
Applying the fundamental inequality $2ab\le a^2+b^2$ with \eqref{eq:2.29},
we deduce that
\begin{equation}
\label{eq:2.41}
\begin{aligned}
&
\|u\zeta\|^2_{\dot H^s(\mathbb R^N)}-\int_{\mathbb R^N}|x|^\ell|u|^{p+1}\zeta^2\,dx
\\
\le \ &
\frac{C_{N,s}}{4} \left(
\int_{\mathbb R^N \times \RN} 
\frac{(\zeta(x)-\zeta(y))^2}{|x-y|^{N+2s}}u(x)^2\,dx\,dy
+\int_{\mathbb R^N \times \RN} 
\frac{(\zeta(x)-\zeta(y))^2}{|x-y|^{N+2s}}u(y)^2\,dx\,dy\right)
\\
= \ & \frac{C_{N,s}}{2}\int_{\mathbb R^N}u(x)^2\rho(\zeta;x)\,dx.
\end{aligned}
\end{equation}
Since $u$ is stable outside $B_{R_0}$, 
by \eqref{eq:1.10} with $\varphi=u\zeta $ (see Remark \ref{Remark:1.3}) and \eqref{eq:2.41}, we have
	\begin{equation}\label{eq:2.42}
		(p-1)\int_{\mathbb R^N}|x|^\ell|u|^{p+1}\zeta^2\,dx 
		\leq
		\| u \zeta \|_{\dHs(\RN)}^2 - \int_{\RN} |x|^\ell |u|^{p+1} \zeta^2 \, dx
		\leq
		\frac{C_{N,s}}{2}\int_{\mathbb R^N}u(x)^2\rho(\zeta;x)\,dx.
	\end{equation}
Hence, by \eqref{eq:2.41} and \eqref{eq:2.42}, 
	\[
		\begin{aligned}
			\frac{1}{p} \| u \zeta \|_{\dHs(\RN)}^2 
			& \leq \frac{1}{p} \int_{  \RN} |x|^\ell |u|^{p+1} \zeta^2 \, dx 
			+ \frac{C_{N,s}}{2p} \int_{  \RN} u(x)^2 \rho (\zeta;x) \, dx 
			\\
			& \leq \frac{C_{N,s}}{2(p-1)} \int_{  \RN} u^2 \rho (\zeta;x) \, dx.
		\end{aligned}
	\]
This together with \eqref{eq:2.42} implies \eqref{eq:2.40} for 
$\zeta \in C^1_{c} (\RN)$ with  $ \zeta \equiv 0$ on $B_{R_0}$.

	Next, let $\zeta \in C^1(\RN)$ satisfy $\zeta \equiv 0$ on $B_{R_0}$ and \eqref{eq:2.39}. 
Let $(\varphi_n)_{n}$ be a sequence of cut-off functions in Lemma \ref{Lemma:2.1} and 
set $\zeta_n := \varphi_n \zeta \in C^1_c(\RN)$. It is easily seen that $(\zeta_n)_n$ satisfies \eqref{eq:2.39} 
uniformly with respect to $n$, namely, the constant $C$ in \eqref{eq:2.39} is independent of $n$. 
Exploiting this fact with \eqref{eq:2.30} and \eqref{eq:2.31}, 
we observe that there exists a $C>0$ such that for all $x \in \RN$ and $n$,
	\[
		\rho(\zeta_n;x) \to \rho(\zeta;x), \quad 
		\left| \rho(\zeta_n;x) \right| \leq C \left( 1 + |x| \right)^{-N-2s}.
	\]
Therefore, from \eqref{eq:2.40} with $\zeta_n$, we find that 
$( \zeta_n u )_n$ is bounded in $\dHs(\RN)$ and it is not difficult to see 
$ |u(x)|^{p+1} \zeta_n (x)^2 \nearrow |u(x)|^{p+1} \zeta(x)^2$ for each $x \in \RN$ and 
$\zeta_n u \rightharpoonup \zeta u$ weakly in $\dHs(\RN)$. 
Hence, from the monotone convergence theorem, the weak lower semicontinuity of norm, 
the fact $u \in L^2(\RN,(1+|x|)^{-N-2s}dx)$, \eqref{eq:2.40} with $\zeta_n$ and 
the dominated convergence theorem, 
it follows that \eqref{eq:2.40} holds for each $\zeta \in C^1(\RN)$ with \eqref{eq:2.39} and 
$\zeta \equiv 0$ on $B_{R_0}$. This completes the proof. 
	\end{proof}

By using Lemmata~\ref{Lemma:2.5} and ~\ref{Lemma:2.6}, we have the following.
\begin{lemma}
\label{Lemma:2.7}
Let $u \in \Hsloc (\RN) \cap L^\infty_{\rm loc} (\RN) \cap L^2(\RN,(1+|x|)^{-N-2s}dx) $ 
be a solution of \eqref{eq:1.1} with \eqref{eq:1.2}, which is stable outside $B_{R_0}$,
and let $\rho_R$ be the function given in Lemma~$\ref{Lemma:2.5}$ with
\begin{equation}
\label{eq:2.43}
m\in\left(\frac{N}{2} , \,  \frac{N+s(p+1) + \ell}{2} \right).
\end{equation}
Then there exists a constant $C=C(N,s,\ell,m,p,R_0)>0$ such that
\begin{equation}
\label{eq:2.44}
\int_{\mathbb R^N}u^2\rho_R\, dx\le C\left( 
\int_{B_{3R_0}} u^2 \rho_R \, dx + 
R^{N-\frac{2}{p-1}(s(p+1)+\ell)}+1\right)
\end{equation}
for all $R\ge 3R_0$.
\end{lemma}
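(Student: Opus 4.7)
The plan is to apply the stability-based inequality of Lemma \ref{Lemma:2.6} with test function $\zeta=\eta_R$, then convert the resulting bound on $\int u^2\rho_R\,dx$ into the desired estimate via Hölder's inequality and the pointwise control of $\rho_R$ given by Lemma \ref{Lemma:2.5} and Remark \ref{Remark:2.3}.

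First, since $\eta_R\equiv 0$ on $B_{R_0}$ and $|x|\,|\nabla\eta_R(x)|+|\eta_R(x)|\leq C(1+|x|)^{-m}$ with $m>N/2$, $\eta_R$ satisfies the hypotheses of Lemma \ref{Lemma:2.6}, giving
\begin{equation*}
\int_{\RN}|x|^\ell |u|^{p+1}\eta_R^2\,dx \leq \frac{C_{N,s}}{p-1}\int_{\RN}u^2\rho_R\,dx.\quad (\star)
\end{equation*}
Next, write $\int_{\RN}u^2\rho_R\,dx=\int_{B_{3R_0}}u^2\rho_R\,dx+\int_{B_{3R_0}^c}u^2\rho_R\,dx$ and apply Hölder's inequality to the outer piece with exponents $(p+1)/2$ and $(p+1)/(p-1)$:
\begin{equation*}
\int_{B_{3R_0}^c}u^2\rho_R\,dx\leq\Bigl(\int_{B_{3R_0}^c}|u|^{p+1}|x|^\ell\eta_R^2\,dx\Bigr)^{\!\frac{2}{p+1}}\Bigl(\int_{B_{3R_0}^c}\rho_R^{\frac{p+1}{p-1}}(|x|^\ell\eta_R^2)^{-\frac{2}{p-1}}dx\Bigr)^{\!\frac{p-1}{p+1}}.
\end{equation*}
Combining with $(\star)$ and Young's inequality $a^{\frac{2}{p+1}}b^{\frac{p-1}{p+1}}\leq\varepsilon a+C_\varepsilon b$, then absorbing the $\varepsilon\int u^2\rho_R$ term into the left side, yields
\begin{equation*}
\int_{\RN}u^2\rho_R\,dx\leq C\Bigl(\int_{B_{3R_0}}u^2\rho_R\,dx+I_R\Bigr),\qquad I_R:=\int_{B_{3R_0}^c}\rho_R^{\frac{p+1}{p-1}}(|x|^\ell\eta_R^2)^{-\frac{2}{p-1}}dx.
\end{equation*}

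It remains to prove $I_R\leq C\bigl(R^{N-\frac{2(s(p+1)+\ell)}{p-1}}+1\bigr)$. On $B_{3R_0}^c$ we have $\eta_R(x)=(1+|x|^2/R^2)^{-m/2}$, and combining Lemma \ref{Lemma:2.5} with the bound $\rho(\eta;y)\leq C(1+|y|)^{-N-2s}$ from Remark \ref{Remark:2.3} gives
\begin{equation*}
\rho_R(x)\leq C(1+|x|^2/R^2)^{-m}|x|^{-N-2s}+CR^N(R+|x|)^{-N-2s}.
\end{equation*}
I would split $B_{3R_0}^c$ into the annulus $\{3R_0\leq|x|\leq R\}$ and the exterior $\{|x|\geq R\}$. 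On the annulus the first term of $\rho_R$ dominates, $\eta_R\sim 1$, and the integrand is bounded by $C|x|^{-\beta}$ with $\beta=[(N+2s)(p+1)+2\ell]/(p-1)$; a direct computation (using $\ell>-2s$) shows $\beta>N$, so this contribution is bounded by a constant depending only on $R_0, N,s,\ell,p,m$ — this produces the "$+1$." On the exterior the second term of $\rho_R$ dominates and $\eta_R(x)\sim(|x|/R)^{-m}$; substituting $y=x/R$ one obtains an integral of the form $R^{N-\frac{2(s(p+1)+\ell)}{p-1}}\int_{|y|\geq 1}(1+|y|^2)^{-m}|y|^{-\gamma}\,dy$, whose convergence at infinity is equivalent to $2m+\gamma>N$, which reduces exactly to the upper bound $m<(N+s(p+1)+\ell)/2$ in \eqref{eq:2.43}.

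The main technical hurdle is the scaling bookkeeping in this last step: checking that the condition $N/2<m<(N+s(p+1)+\ell)/2$ is simultaneously sharp for (i) ensuring $u^2\rho_R$ is integrable (via Remark \ref{Remark:2.3} and the hypothesis $u\in L^2(\RN,(1+|x|)^{-N-2s}dx)$), (ii) making the annular contribution to $I_R$ a bounded constant, and (iii) producing the precise scaling exponent $N-2(s(p+1)+\ell)/(p-1)$ from the exterior region. The Hölder/Young absorption in Step 2 is standard once $(\star)$ is in hand.
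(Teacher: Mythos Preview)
Your approach is essentially the paper's: H\"older on $B_{3R_0}^c$, Lemma \ref{Lemma:2.6} to close the loop, and an estimate of $I_R$ via the split at $|x|=R$. Your use of Young's inequality for the absorption is a harmless variant of the paper's device of dividing through by $\bigl(\int u^2\rho_R\bigr)^{2/(p+1)}$.

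There is, however, a genuine gap in your annulus analysis. On $\{3R_0\le|x|\le R\}$ it is \emph{not} true that the first term of $\rho_R$ dominates: near $|x|\approx R$ the first term is $\sim R^{-N-2s}$ while the second term $2R^{-2s}\rho(\eta;x/R)\sim R^{-2s}$ is much larger. Hence your pointwise bound ``integrand $\le C|x|^{-\beta}$'' fails there, and the annulus does not contribute only the ``$+1$''. One must keep both pieces (as the paper does): the $|x|^{-N-2s}$ part indeed yields a constant, but the $R^{-2s}$ part gives
\[
R^{-\frac{2s(p+1)}{p-1}}\int_{3R_0}^{R} r^{\,N-1-\frac{2\ell}{p-1}}\,dr,
\]
which contributes $CR^{\,N-\frac{2(s(p+1)+\ell)}{p-1}}$ when $N>2\ell/(p-1)$. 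The final bound $I_R\le C\bigl(R^{\,N-\frac{2(s(p+1)+\ell)}{p-1}}+1\bigr)$ is unchanged, but both regions feed the growing term.

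Your exterior computation also has a bookkeeping slip: after $y=x/R$, the factor from $\eta_R^{-4/(p-1)}$ is $(1+|y|^2)^{+2m/(p-1)}$ (it grows, since $\eta_R$ decays), not $(1+|y|^2)^{-m}$. At infinity the true radial integrand behaves like $|y|^{-(N+2s)\frac{p+1}{p-1}+\frac{4m}{p-1}-\frac{2\ell}{p-1}+N-1}$, and integrability is precisely $m<\tfrac{N+s(p+1)+\ell}{2}$ --- so your $R$-exponent and convergence condition are correct, but for the opposite reason: convergence \emph{worsens} as $m$ increases, which is why \eqref{eq:2.43} imposes an upper bound on $m$.
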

\begin{remark}
\label{Remark:2.5}
Due to \eqref{eq:1.2}, $ 0 < s (p+1) + \ell$ holds and 
we may choose an $m$ satisfying \eqref{eq:2.43}. 
\end{remark}

	\begin{proof}[Proof of Lemma \ref{Lemma:2.7}]
We basically follow the idea in \cite[Lemma 2.4]{DDW} and let $R \geq 3R_0$. 
First, by H\"older's inequality, 
	\begin{equation}\label{eq:2.45}
		\begin{aligned}
			&\int_{\RN} u^2 \rho_R \, dx 
			\\
			=\ & \int_{B_{3R_0}} u^2 \rho_R \, dx 
			+ \int_{B_{3R_0}^c} 
			u^2 \rho_R 
			\left( |x|^{\frac{\ell}{2}} \eta_R \right)^{\frac{4}{p+1}}
			\left( |x|^{\frac{\ell}{2}} \eta_R \right)^{ - \frac{4}{p+1} }  \, dx
			\\
			\leq \ & \int_{B_{3R_0}} u^2 \rho_R \, dx 
			+ \left( \int_{B_{3R_0}^c} |x|^\ell |u|^{p+1} \eta_R^2 \, dx \right)^{\frac{2}{p+1}} 
			\left( \int_{B_{3R_0}^c} |x|^{ -\frac{2\ell}{p-1} } \eta_R^{ -\frac{4}{p-1} } \rho_R^{ \frac{p+1}{p-1} } 
			\, dx \right)^{ \frac{p-1}{p+1} }.
		\end{aligned}
	\end{equation}

For the case $3R_0\le|x|\le R$, 
by Lemma~\ref{Lemma:2.4} with \eqref{eq:2.35}, \eqref{eq:2.36} and Remark \ref{Remark:2.3}, we have
\begin{equation}
\label{eq:2.46}
2^{-\frac{m}{2}}=\eta(1)\le \eta_R(x)\le \eta\bigg(\frac{3R_0}{R}\bigg)\le 1
\qquad\mbox{and}\qquad
\rho\bigg(\eta;\frac{x}{R}\bigg)\le C. 
\end{equation}
Then, by \eqref{eq:2.37} we obtain
\[
\rho_R(x)\le C(|x|^{-N-2s}+R^{-2s}) 
\quad \text{for all $3R_0 \leq |x| \leq R$}.
\]
This together with \eqref{eq:2.46} yields
\begin{equation}
\label{eq:2.47}
\begin{aligned}
&
\int_{B_R\setminus B_{3R_0}}|x|^{-\frac{2\ell}{p-1}}\rho_R^{\frac{p+1}{p-1}}\eta_R^{-\frac{4}{p-1}}\,dx
\\
\le\ & C\int_{3R_0}^Rr^{-\frac{2\ell}{p-1}}\bigg(r^{-(N+2s)}+R^{-2s}\bigg)^{\frac{p+1}{p-1}}r^{N-1}\, dr
\\
\le \ & C\int_{3R_0}^Rr^{N-1-\frac{2\ell}{p-1}-(N+2s)\frac{p+1}{p-1}}\, dr+C R^{-2s\frac{p+1}{p-1}}\int_{3R_0}^Rr^{N-1-\frac{2\ell}{p-1}}\, dr
\\
\le \ & C\int_{3R_0}^\infty r^{-1-\frac{2}{p-1}(N+\ell+s(p+1))}\, dr+C R^{-2s\frac{p+1}{p-1}}\int_{3R_0}^Rr^{N-1-\frac{2\ell}{p-1}}\, dr
\\
\le \ & C+C R^{-2s\frac{p+1}{p-1}}\int_{3R_0}^Rr^{N-1-\frac{2\ell}{p-1}}\, dr.
\end{aligned}
\end{equation}
Since
\[
	\begin{aligned}
		R^{-2s\frac{p+1}{p-1}}\int_{3R_0}^Rr^{N-1-\frac{2\ell}{p-1}}\, dr
		&\leq 
		\left\{\begin{aligned}
			& C  R^{-2s \frac{p+1}{p-1} } & &
			\text{if} \ N - \frac{2\ell}{p-1} < 0,\\
			& C R^{ - 2 s \frac{p+1}{p-1} } \left( 1 + \log R \right) & &
			\text{if} \ N - \frac{2\ell}{p-1} = 0,\\
			& C R^{ N - \frac{2}{p-1} \left( s(p+1) + \ell \right) } & &
			\text{if} \ N - \frac{2\ell}{p-1} > 0,
		\end{aligned}\right.
		\\
		&\le 
		\left\{
		\begin{aligned}
		& C & &\mbox{if}\quad N-\frac{2\ell}{p-1}\le0,
		\\
		&C R^{N-\frac{2}{p-1}(s(p+1)+\ell)} 
		& &\mbox{if}\quad N-\frac{2\ell}{p-1}>0,
		\end{aligned}
		\right.
	\end{aligned}
\]
by \eqref{eq:2.47}, we see that
\begin{equation}
\label{eq:2.48}
\int_{B_R\setminus B_{3R_0}}|x|^{-\frac{2\ell}{p-1}}\rho_R^{\frac{p+1}{p-1}}\eta_R^{-\frac{4}{p-1}}\,dx
\le C+CR^{N-\frac{2}{p-1}(s(p+1)+\ell)}.
\end{equation}

	On the other hand, for the case $|x|\ge R\ge 3R_0$,
by \eqref{eq:2.35} and $R^2+|x|^2 \leq 2|x|^2$, we obtain
\begin{align*}
\eta\bigg(\frac{x}{R}\bigg)^2|x|^{-N-2s}
\le C\eta(1)^2(R^2+|x|^2)^{-\frac{N}{2}-s}
&\le CR^{-N-2s}\bigg(1+\frac{|x|^2}{R^2}\bigg)^{-\frac{N}{2}-s}
\\
&\le CR^{-2s}\bigg(1+\frac{|x|^2}{R^2}\bigg)^{-\frac{N}{2}-s}.
\end{align*}
This together with \eqref{eq:2.30}, \eqref{eq:2.37} and Remark \ref{Remark:2.3} implies that
	\[
		\rho_R(x)\le CR^{-2s}\bigg(1+\frac{|x|^2}{R^2}\bigg)^{-\frac{N}{2}-s} 
		\quad \mbox{for each} \ |x| \geq R.
	\]
Thus, \eqref{eq:2.35}, \eqref{eq:2.36} and the fact $|x| \geq R$ give 
\begin{equation}
\label{eq:2.49}
	\begin{aligned}
		&|x|^{-\frac{2\ell}{p-1}}\rho_R(x)^{\frac{p+1}{p-1}}\eta_R(x)^{-\frac{4}{p-1}}
		\\
		\leq \ & 
		C \left( R^2+|x|^2 \right)^{-\frac{\ell}{p-1}} 
		\left\{  R^{-2s} \left( 1 + \frac{|x|^2}{R^2} \right)^{- \frac{N+2s}{2} } \right\}^{ \frac{p+1}{p-1} } 
		\left( 1 + \frac{|x|^2}{R^2} \right)^{ \frac{m}{2} \frac{4}{p-1}  }
		\\
		\leq \ & CR^{-2s\frac{p+1}{p-1}-\frac{2\ell}{p-1}} 
		\left(1+\frac{|x|^2}{R^2}\right)^{-\frac{N+2s}{2}\frac{p+1}{p-1}+\frac{2m}{p-1} - \frac{\ell}{p-1} }.
	\end{aligned}
\end{equation}
Since it follows from \eqref{eq:2.43} that
\[
\alpha_{ N,s,p,m,\ell } := -\frac{N+2s}{2}\frac{p+1}{p-1}+\frac{2m}{p-1} - \frac{\ell}{p-1} < -\frac{N}{2},
\]
by \eqref{eq:2.49} we have
\begin{equation}
\label{eq:2.50}
\begin{aligned}
&
\int_{B_R^c}|x|^{-\frac{2\ell}{p-1}}\rho_R^{\frac{p+1}{p-1}}\eta_R^{-\frac{4}{p-1}}\,dx
\\
\le \ &
CR^{-2s\frac{p+1}{p-1}-\frac{2\ell}{p-1}}\int_R^\infty
 \left(1+\frac{r^2}{R^2}\right)^{\alpha_{ N,s,p,m,\ell } }r^{N-1}\, dr
\\
= \ & 
C R^{- 2s \frac{p+1}{p-1} - \frac{2\ell}{p-1} + N-1 } 
\int_{R}^{\infty} \left( 1 + \frac{r^2}{R^2} \right)^{ \alpha_{ N,s,p,m,\ell } } 
\left( \frac{r}{R} \right)^{N-1} \, dr
\\
\le \ &
CR^{N-\frac{2}{p-1}(s(p+1)+\ell)}.
\end{aligned}
\end{equation}
Combining \eqref{eq:2.48} and \eqref{eq:2.50},  we obtain
\begin{equation}
\label{eq:2.51}
\int_{B_{3R_0}^c} |x|^{-\frac{2\ell}{p-1}}\rho_R^{\frac{p+1}{p-1}}\eta_R^{-\frac{4}{p-1}}\,dx
\le C\left(R^{N-\frac{2}{p-1}(s(p+1)+\ell)}+1\right).
\end{equation}

	Now we substitute \eqref{eq:2.51} into \eqref{eq:2.45} and 
infer from \eqref{eq:2.40} with $\zeta= \eta_R$ that 
	\[
		\begin{aligned}
			&\int_{\RN} u^2 \rho_R \, dx
			\\
			\leq \ & 
			\int_{B_{3R_0}} u^2 \rho_R \, dx 
			+ C \left( \int_{ B_{3R_0^c} } |x|^\ell |u|^{p+1} \eta_R^2 \, dx \right)^{\frac{2}{p+1}} 
			\left(R^{N-\frac{2}{p-1}(s(p+1)+\ell)}+1\right)^{\frac{p-1}{p+1}} 
			\\
			\leq \ &
			\int_{B_{3R_0}} u^2 \rho_R \, dx 
			+ C \left( \int_{\RN } u^2 \rho_R \, dx \right)^{\frac{2}{p+1}} 
			\left(R^{N-\frac{2}{p-1}(s(p+1)+\ell)}+1\right)^{\frac{p-1}{p+1}} 
		\end{aligned} 
	\]
for $R\ge 3R_0$. Dividing the both sides 
by $ \left( \int_{\RN } u^2 \rho_R \, dx \right)^{\frac{2}{p+1}} < \infty $ 
and noting 
	\[
		\left( \int_{ B_{3R_0} } u^2 \rho_R \, dx \right) 
		\left( \int_{  \RN} u^2 \rho_R \, dx \right)^{ - \frac{2}{p+1} } 
		\leq \left( \int_{ B_{3R_0} } u^2 \rho_R \, dx \right)^{\frac{p-1}{p+1}}, 
	\]
we have \eqref{eq:2.44} and Lemma~\ref{Lemma:2.7} follows.
\end{proof}

For the supercritical case,
we have the following energy estimates for the function $U$, which is given in \eqref{eq:2.3}.
\begin{lemma}
\label{Lemma:2.8}
Assume $p_S(N,\ell) < p $ and \eqref{eq:1.2}. 
Let $u \in \Hsloc (\RN) \cap L^\infty_{\rm loc} (\RN) \cap L^2(\RN, (1+|x|)^{-N-2s}dx) $ 
be a solution of \eqref{eq:1.1} which is stable outside $B_{R_0}$
and $U$ be the function given in \eqref{eq:2.3}.
Then there exists a $C=C(N,p,s,\ell,R_0,u)>0$ such that
\begin{equation}
\label{eq:2.52}
\int_{B^+_R}t^{1-2s}U^2\, dX\le CR^{N+2(1-s)-\frac{2(2s+\ell)}{p-1}}
\end{equation}
for all $R\ge 3R_0$.
\end{lemma}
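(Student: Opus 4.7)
The plan is to exploit the Poisson integral representation $U(x,t) = (P_s(\cdot,t) \ast u)(x)$ and reduce the weighted $L^2$ bound on $U$ to the weighted integrals of $u^2$ that are controlled by Lemma~\ref{Lemma:2.7}. Since $\|P_s(\cdot,t)\|_{L^1(\RN)} = 1$, the Cauchy--Schwarz inequality gives the pointwise bound
\[
U(x,t)^2 \leq (P_s(\cdot,t) \ast u^2)(x).
\]
Inserting this into $\int_{B_R^+} t^{1-2s} U^2\,dX$ and applying Fubini reduces the desired estimate to controlling $\int_{\RN} u(y)^2 K_R(y)\,dy$, where
\[
K_R(y) := p_{N,s}\int_0^R \int_{B_R} \frac{t}{(|x-y|^2+t^2)^{(N+2s)/2}} \,dx\,dt.
\]

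Next, I would estimate $K_R$ in two regimes. The uniform bound $\int_{B_R} (|x-y|^2+t^2)^{-(N+2s)/2}\,dx \leq C t^{-2s}$ (obtained by splitting the $x$-integration into $B_t(y)$ and its complement) yields $K_R(y) \leq C R^{2-2s}$ for every $y \in \RN$. On the other hand, for $|y| \geq 2R$ and $x \in B_R$ one has $|x-y| \geq |y|/2 \geq R$, so $K_R(y) \leq C R^{N+2} |y|^{-N-2s}$. Combining these with the pointwise lower bound
\[
\rho_R(y) \geq c R^N (R+|y|)^{-N-2s} \qquad \text{for every } |y| \geq 2R_0,
\]
(this is the content of the estimate~\eqref{eq:2.58} anticipated in Remark~\ref{Remark:2.3}, obtained by restricting the integral defining $\rho_R$ to a region where $\eta_R(y)$ and $\eta_R(z)$ differ by a definite amount), I obtain the dominance $K_R(y) \leq C R^2 \rho_R(y)$ on $\{|y| \geq 2R_0\}$.

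Splitting the $y$-integration accordingly and using $u \in L^\infty_{\rm loc}(\RN)$ to bound the contribution from $B_{2R_0}$ by $C R^{2-2s}$, we reach
\[
\int_{B_R^+} t^{1-2s} U^2 \, dX \leq C R^{2-2s} + C R^2 \int_{\RN} u^2 \rho_R \, dy.
\]
An application of Lemma~\ref{Lemma:2.7} with $m$ satisfying~\eqref{eq:2.43} gives $\int u^2 \rho_R\, dy \leq C(R^{N - \frac{2(s(p+1)+\ell)}{p-1}} + 1)$, and since
\[
\frac{2(s(p+1)+\ell)}{p-1} = 2s + \frac{2(2s+\ell)}{p-1},
\]
multiplying by $R^2$ produces the target exponent $N+2(1-s)-\frac{2(2s+\ell)}{p-1}$. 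The supercriticality $p > p_S(N,\ell)$ is equivalent to $N-2s > \frac{2(2s+\ell)}{p-1}$, which ensures the leading term dominates the residual $R^{2-2s}$ and $R^2$ contributions for all $R \geq 3R_0$, yielding \eqref{eq:2.52}.

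The main technical obstacle is the uniform lower bound $\rho_R(y) \geq c R^N(R+|y|)^{-N-2s}$ on $\{|y| \geq 2R_0\}$: the cutoff $\psi(\cdot/R_0)$ must not disturb the natural scaling relation $\rho(\eta(\cdot/R); y) = R^{-2s} \rho(\eta; y/R)$ that underlies the estimate. Once this lower bound is in hand, the remainder of the proof is reduced to the elementary kernel estimates above and an invocation of Lemma~\ref{Lemma:2.7}.
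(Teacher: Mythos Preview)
Your approach is correct and somewhat more streamlined than the paper's. The paper decomposes $u = \zeta_{R_0} u + (1-\zeta_{R_0})u =: v + w$ with $\zeta_{R_0}$ a cutoff supported in $B_{4R_0}$, and correspondingly $U = V + W$. The compactly supported piece $V$ is handled by Young's inequality, $\|V(\cdot,t)\|_{L^2}\le\|v\|_{L^2}$, which gives $\int_{B_R^+}t^{1-2s}V^2\,dX\le CR^{2-2s}$. For $W$ the paper applies Cauchy--Schwarz as you do, but then splits the $y$-integral according to $|x-y|\le 3R$ versus $|x-y|>3R$: the near part yields $CR^{2-2s}\int_{B_{4R}}w^2$, which is estimated via H\"older and Lemma~\ref{Lemma:2.6}; the far part uses the lower bound $\rho_R(y)\ge C_0R^N|y|^{-N-2s}$ only in the range $|y|\ge 2R$, which is all that~\eqref{eq:2.58} actually establishes.

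Your route avoids the decomposition of $u$ and the intermediate H\"older step altogether, reducing everything to the single pointwise comparison $K_R(y)\le CR^2\rho_R(y)$. The cost is that you need the lower bound $\rho_R(y)\ge cR^N(R+|y|)^{-N-2s}$ on the full range $|y|\ge 2R_0$, whereas~\eqref{eq:2.58} covers only $|y|\ge 2R$. For the intermediate range $2R_0\le|y|\le 2R$ one must additionally check $\rho_R(y)\ge cR^{-2s}$; this follows by the same change of variables as in~\eqref{eq:2.58}, writing $\rho_R(y)\ge R^{-2s}\int_{|w|\ge 2R_0/R}\frac{(\eta(y/R)-\eta(w))^2}{|y/R-w|^{N+2s}}\,dw$ and observing that $\eta(y/R)\ge\eta(2)>0$ differs from $\eta(w)$ by a definite amount once $|w|$ is large. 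Since you flag this as the main technical obstacle and correctly describe the mechanism, the argument is complete; it is arguably cleaner than the paper's, though the underlying ingredients (Cauchy--Schwarz on $P_s$, the lower bound on $\rho_R$, and Lemma~\ref{Lemma:2.7}) are the same.
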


	\begin{remark}
		If $ p_S(N,\ell) < p $, then 	
			\begin{equation}\label{eq:2.53}
				N - \frac{2}{p-1} \left( s ( p+ 1) + \ell \right) > 0.
			\end{equation}
	\end{remark}

	\begin{proof}[Proof of Lemma \ref{Lemma:2.8}]
Let $\zeta_{R_0} \in C^\infty_c(\RN)$ with $0 \leq \zeta_{R_0} \leq 1$, $\zeta_{R_0} \equiv 1$ on $B_{3R_0}$ and 
$\zeta_{R_0} \equiv 0$ on $B_{4R_0}^c$. 
We decompose $u$ as
	\[
		u(x) = \zeta_{R_0} (x) u(x) + (1 - \zeta_{R_0} (x)) u(x) = : v(x) + w(x).
	\]
Notice that $v \in H^s(\RN)$ with $\supp v \subset \overline{B_{4R_0}}$ and 
$w \in H^s_{\rm loc} (\RN)$. 
Recalling \eqref{eq:2.3}, we also decompose $U$ as 
	\[
		U(x,t) = (P_s(\cdot, t) \ast u) (x) 
		= (P_s (\cdot , t) \ast v ) (x) + (P_s (\cdot , t) \ast w) (x) 
		= : V(x,t) + W(x,t).
	\]

	We first estimate $V(x,t)$. By Young's inequality and $\| P_s (\cdot , t) \|_{L^1(\RN)} = 1$, 
it follows that 
	\[
		\left\| V(\cdot, t) \right\|_{L^2(\RN)} \leq \| P_s (\cdot , t) \|_{L^1(\RN)} \| v \|_{L^2(\RN)} 
		= \| v \|_{L^2(\RN)} \quad \text{for each $t \in (0,\infty)$}.
	\]
Therefore, 
	\[
		\begin{aligned}
		\int_{B_R^+} t^{1-2s} |V|^2 \, d X 
		& \leq \int_{0}^{R} dt \int_{\RN} t^{1-2s} |V(x,t)|^2 \, dx 
		\leq \int_0^R t^{1-2s} \| v \|_{L^2(\RN)}^2 dt 
		= C R^{2-2s}.
		\end{aligned}
	\]
From 
	\[
		2-2s \leq N + 2 (1-s) - \frac{2(2s+\ell)}{p-1},
	\]
we infer that 
	\begin{equation}\label{eq:2.54}
		\int_{B^+_R} t^{1-2s} |V|^2 \, dX \leq C R^{N + 2 (1-s) - \frac{2(2s+\ell)}{p-1}}. 
	\end{equation}

	Next, we consider $W(x,t)$. By H\"older's inequality, 
	\begin{equation}\label{eq:2.55}
		\begin{aligned}
			& \int_{B_R^+} t^{1-2s} |W|^2 \, dX 
			\\
			= \ &
			\int_{ B_R^+} t^{1-2s} \left( \int_{\RN} (P_s(x-y,t))^{1/2} (P_s(x-y,t))^{1/2} w(y) \, dy  \right)^2 \, dX 
			\\
			\leq \ & C \int_{ B_R^+} dX  t^{1-2s} \int_{\mathbb R^N}w(y)^2\frac{t^{2s}}{(|x-y|^2+t^2)^{\frac{N+2s}{2}}}\, dy
			\\
			\leq \ & C \int_{ B_R^+} dX t^{1-2s} 
			\left( \int_{|x-y| \leq 3R} + \int_{ |x-y|> 3R} \right) 
			 w(y)^2\frac{t^{2s}}{(|x-y|^2+t^2)^{\frac{N+2s}{2}}}\, dy.
		\end{aligned}
	\end{equation}
For $y \in \RN$ with $|x-y| \leq 3R$, since it follows from \eqref{eq:1.2} that 
	\[
		- \frac{N+2s}{2} + 1 < 0 \quad \text{if} \ N \geq 2, \quad 
		- \frac{N+2s}{2} + 1 = \frac{1-2s}{2} > 0 \quad \text{if} \ N = 1, 
	\]
we see that 
	\[
		\begin{aligned}
			&\int_{ B_R^+} dX t^{1-2s} \int_{ |x-y| \leq 3R} 
			w(y)^2 \frac{t^{2s}}{(|x-y|^2+t^2)^{\frac{N+2s}{2}}}\, dy
			\\
			\leq & \ 
			\int_0^R dt \int_{B_R} dx 
			\int_{|x-y| \leq 3 R}
			w(y)^2\frac{t}{(|x-y|^2+t^2)^{\frac{N+2s}{2}}}\, dy
			\\
			= & \ 
			\int_{ B_R} dx \int_{ |x-y| \leq 3R} dy 
			\int_{0}^R w(y)^2 
			\frac{1}{2-N-2s} \frac{\partial}{\partial t} 
			\left( |x-y|^2 + t^2 \right)^{ \frac{2 - (N+2s)}{2} }
			\, dt 
			\\
			= \ & \frac{1}{2 - N - 2s} \int_{ B_R} dx \int_{ |x-y| \leq 3R} 
			w(y)^2 
			\left\{ \left( |x-y|^2 + R^2 \right)^{ \frac{2-N-2s}{2}   } - |x-y|^{ 2 - N -2s } \right\} \, dy
			\\
			\leq \ & 
			\left\{\begin{aligned}
				& \frac{1}{1 - 2s } \int_{ B_R} dx \int_{ |x-y| \leq 3R} 
				w(y)^2 \left( |x-y|^2 + R^2 \right)^{ \frac{1-2s}{2} }  \, dy 
				& & \text{when $N = 1$},
				\\
				& \frac{1}{N+2s -2 }\int_{ B_R} dx \int_{ |x-y| \leq 3R} 
				w(y)^2 |x-y|^{ 2-N-2s } \, dy 
				& & \text{when $N \geq 2$}.
			\end{aligned}\right.
		\end{aligned}
	\]
When $N = 1$, by $\set{y \in \RN | |x-y| \leq 3R} \subset B_{4R}$ for each $x \in B_R$, we have 
	\[
		\begin{aligned}
			 \int_{ B_R} dx \int_{ |x-y| \leq 3R} 
			w(y)^2 \left( |x-y|^2 + R^2 \right)^{ \frac{1-2s}{2} }  \, dy 
			& \leq C R^{1-2s} \int_{ B_R} dx \int_{ |x-y| \leq 3R} w(y)^2 \, dy
			\\
			& \leq C R^{2-2s} \int_{  B_{4R}} w(y)^2 \, dy.
		\end{aligned}
	\]
On the other hand, when $N \geq 2$, since 
$B_R(y) \subset B_{5R}$ for each $y \in B_{4R}$, we have 
	\[
		\begin{aligned}
			\int_{ B_R} dx \int_{ |x-y| \leq 3R} 
			w(y)^2 |x-y|^{ 2-N-2s } \, dy 
			& \leq C \int_{ B_R} dx \int_{  B_{4R}} w(y)^2 |x-y|^{2-N-2s } \, dy
			\\
			& = C \int_{  B_{4R}} dy \int_{ B_R (y) } w(y)^2 |z|^{2-N-2s} \, dz
			\\
			& \leq C \int_{  B_{4R}} dy \int_{  B_{5R}} w(y)^2 |z|^{2-N-2s} \, d z
			\\
			& = C R^{2 - 2s} \int_{  B_{4R}} w(y)^2 \,dy,
		\end{aligned}
	\]
hence, for $N \geq 1$ and $R \geq 3R_0$, 
	\[
		\int_{ B_R^+} dX t^{1-2s} \int_{ |x-y| \leq 3R} 
		w(y)^2 \frac{t^{2s}}{(|x-y|^2+t^2)^{\frac{N+2s}{2}}}\, dy 
		\leq C R^{2-2s} \int_{  B_{4R}} w(y)^2 \, dy.
	\]
Notice that $w \equiv 0$ on $B_{3R_0}$, $|w| \leq |u|$ 
and $0<c \leq \eta_R(x)$ for any $3R_0 \leq |x| \leq 4R$. 
By Lemma \ref{Lemma:2.6} and $N - 2\ell / (p-1) > 0$ due to $p > p_S(N,\ell)$,  
we may argue as in \eqref{eq:2.45} and \eqref{eq:2.47} to obtain 
	\[
		\begin{aligned}
			& \int_{ B_R^+} dX t^{1-2s} \int_{ |x-y| \leq 3R} 
			w(y)^2 \frac{t^{2s}}{(|x-y|^2+t^2)^{\frac{N+2s}{2}}}\, dy 
			\\
			\leq \ & C R^{2-2s} \int_{ 3R_0 \leq |x| \leq 4R} 
			u^2 \left( |x|^{\frac{\ell}{2}} \eta_R \right)^{\frac{4}{p+1}} 
			\left( |x|^{\frac{\ell}{2}} \eta_R \right)^{-\frac{4}{p+1}} \, dx
			\\
			\leq \ & CR^{2-2s} 
			\left( \int_{\RN} |x|^\ell |u|^{p+1} \eta_R^2 \, dx  \right)^{\frac{2}{p+1}} 
			\left( \int_{3R_0 \leq |x| \leq 4R} |x|^{-\frac{2\ell}{p-1}} \,dx
			 \right)^{\frac{p-1}{p+1}}
			 \\
			 \leq \ & C R^{ 2(1-s) + (N-\frac{2\ell}{p-1})\frac{p-1}{p+1} } 
			 \left( \int_{  \RN} u^2 \rho_R \, dx \right)^{\frac{2}{p+1}}.
		\end{aligned}
	\]
Furthermore, by \eqref{eq:2.53} and Lemma~\ref{Lemma:2.7}, enlarging $C$ if necessary, we obtain
\begin{equation}\label{eq:2.56}
\int_{\mathbb R^N}u^2\rho_R\,dx\le CR^{N-\frac{2}{p-1}(s(p+1)+\ell)},
\end{equation}
which yields 
\begin{equation}
\label{eq:2.57}
\int_{ B_R^+ } dX t^{1-2s} \int_{ |x-y| \leq 3R} 
w(y)^2 \frac{t^{2s}}{(|x-y|^2+t^2)^{\frac{N+2s}{2}}}\, dy 
\le CR^{N+2(1-s)-\frac{2(2s+\ell)}{p-1}}. 
\end{equation}

	Next, we consider the second term in \eqref{eq:2.55}, namely the case $|x-y| > 3R$. 
Since $|x-y|\ge |y|-|x| \ge |y|-R\ge |y|/2$ and $B_{3R}^c(x) \subset B_{2R}^c$ 
for $x\in B_R$ and $y\in B^c_{2R}$, we have
	\[
		\begin{aligned}
			&\int_{ B_R^+} dX t^{1-2s} \int_{ |x-y| > 3R} 
			w(y)^2 \frac{t^{2s}}{(|x-y|^2+t^2)^{\frac{N+2s}{2}}}\, dy
			\\
			\leq \ & \int_{ B_R} dx \int_{ |x-y| > 3R} dy 
			\int_0^R w(y)^2  t |x-y|^{-N-2s} \, dt
			\\
			\leq \ & R^2 \int_{ B_R} dx \int_{ B_{3R}^c(x)} 
			w(y)^2 |x-y|^{-N-2s} \, dy
			\\
			\leq \  & C R^2 \int_{ B_R} dx \int_{ B_{3R}^c(x)} 
			w(y)^2 |y|^{-N-2s} \, dy
			\\
			\leq \ & C R^{2+N} \int_{ |z| \geq 2R} w(z)^2 |z|^{-N-2s} \, dy.
		\end{aligned}
	\]
On the other hand, from the definition of $\eta_R$ and $\rho_R$, it follows that 
for $|x| \geq 2 R \geq 6 R_0$ and $\boldsymbol{e}_1 = (1,0,\ldots,0) \in \RN$, 
	\begin{equation}\label{eq:2.58}
		\begin{aligned}
			\rho_R(x) = \int_{  \RN} \frac{ ( \eta_R(x) - \eta_R (y) )^2 }{|x-y|^{N+2s}} \, dy 
			&\geq \int_{ |y| \geq 2 R_0 } 
			\frac{ \left( \eta\left( \frac{x}{R} \right) - \eta \left(\frac{y}{R}\right) \right)^2 }{|x-y|^{N+2s}} \, dy
			\\
			&= R^{-2s} \int_{ |z| \geq 2 R^{-1} R_0 } 
			\frac{ \left( \eta\left(\frac{x}{R}\right) - \eta(z) \right)^2 }{|R^{-1} x - z|^{N+2s}  } \, dz
			\\
			& \geq R^{-2s} \int_{ |z- \boldsymbol{e}_1 | < \frac{1}{3}}
			\frac{ \left( \eta\left(\frac{x}{R}\right) - \eta(z) \right)^2 }{|R^{-1} x - z|^{N+2s}  } \, dz
			\\
			&\geq C_0 R^{-2s} \left| R^{-1} x \right|^{-N-2s} 
			= C_0 R^{N} |x|^{-N-2s}
		\end{aligned}
	\end{equation}
for some $C_0>0$. Thus, noting $u \equiv w$ on $|y| \geq 2R$ and \eqref{eq:2.56}, we obtain 
	\[
		\begin{aligned}
			 R^{2+N} \int_{ |y| \geq 2R} w(y)^2 |y|^{-N-2s} \, dy
			 & \leq C R^2 \int_{ |y| \geq 2R} u^2 \rho_R \, dy 
			 \leq  CR^{N+ 2 (1-s) - \frac{2(2s+\ell)}{p-1} },
		\end{aligned}
	\]
which implies 
	\begin{equation}\label{eq:2.59}
		\int_{ B_R^+} dX t^{1-2s} \int_{ |x-y| > 3R} 
		w(y)^2 \frac{t^{2s}}{(|x-y|^2+t^2)^{\frac{N+2s}{2}}}\, dy
		\leq CR^{N+ 2 (1-s) - \frac{2(2s+\ell)}{p-1} }.
	\end{equation}
Substituting \eqref{eq:2.57} and \eqref{eq:2.59} into \eqref{eq:2.55}, we see that 
	\[
		\int_{ B_R^+} t^{1-2s} W^2 \, dX \leq CR^{N+ 2 (1-s) - \frac{2(2s+\ell)}{p-1} }.
	\]
This with \eqref{eq:2.54} and $U=V+W$ completes the proof of Lemma \ref{Lemma:2.8}. 
	\end{proof}

\begin{lemma}
\label{Lemma:2.9}
Assume the same conditions as in Lemma~$\ref{Lemma:2.8}$. 
Then there exists a $C=C(N,p,s,\ell,R_0,u)>0$ such that
\begin{equation}
\label{eq:2.60}
\int_{B^+_R}t^{1-2s}|\nabla U|^2\, dX
+\int_{B_R}|x|^\ell|u|^{p+1}\,dx
\le CR^{N-\frac{2}{p-1}(s(p+1)+\ell)}
\end{equation}
for all $R\ge 3R_0$.
\end{lemma}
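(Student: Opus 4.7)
The plan is to establish the two components of \eqref{eq:2.60} separately, with each of order $R^{N-\frac{2}{p-1}(s(p+1)+\ell)}$. For the $L^{p+1}$-integral I apply Lemma \ref{Lemma:2.6} with $\zeta=\eta_R$ (choosing $m$ as in \eqref{eq:2.43}) together with the bound \eqref{eq:2.56} already derived in the proof of Lemma \ref{Lemma:2.8} to obtain
\[
\int_{\RN} |x|^\ell |u|^{p+1}\eta_R^2\,dx \le \frac{C_{N,s}}{p-1}\int_{\RN} u^2\rho_R\,dx \le C\,R^{N-\frac{2}{p-1}(s(p+1)+\ell)}.
\]
Since $\eta_R(x)\ge 2^{-m/2}$ whenever $2R_0\le|x|\le R$, and since $\int_{B_{2R_0}}|x|^\ell|u|^{p+1}dx$ is finite by $u\in L^\infty_{\rm loc}(\RN)$ together with $\ell>-2s>-N$ (using $N>2s$), this additive constant is absorbed into $R^{N-\frac{2}{p-1}(s(p+1)+\ell)}$ because the exponent is positive when $p>p_S(N,\ell)$ and $R\ge 3R_0$. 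This gives the bound on $\int_{B_R}|x|^\ell|u|^{p+1}dx$.

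For the energy $\int_{B_R^+}t^{1-2s}|\nabla U|^2\,dX$ the strategy is a Caccioppoli-type inequality on the extension. I fix a cut-off $\phi\in C^\infty_c(\overline{\HS})$ with $0\le\phi\le 1$, $\phi\equiv 1$ on $B_R^+$, $\supp\phi\subset B_{2R}^+$ and $|\nabla\phi|\le C/R$, and test the weak formulation of Lemma \ref{Lemma:2.2} with $\psi=U\phi^2$. Expanding $\nabla(U\phi^2)=\phi^2\nabla U+2U\phi\,\nabla\phi$ and applying the Cauchy--Schwarz inequality to absorb the cross term into the left-hand side yields
\[
\int_{B_R^+} t^{1-2s}|\nabla U|^2\,dX \le C\int_{\RN}|x|^\ell|u|^{p+1}\phi^2(x,0)\,dx +\frac{C}{R^2}\int_{B_{2R}^+}t^{1-2s}U^2\,dX.
\]
The first term is controlled by the $L^{p+1}$-bound of the previous paragraph applied at radius $2R$; for the second term Lemma \ref{Lemma:2.8} gives a bound of order $R^{N+2(1-s)-\frac{2(2s+\ell)}{p-1}-2}$, which agrees with $R^{N-\frac{2}{p-1}(s(p+1)+\ell)}$ thanks to the algebraic identity $2s+\frac{2(2s+\ell)}{p-1}=\frac{2(s(p+1)+\ell)}{p-1}$.

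The main technical obstacle I anticipate is the rigorous justification of testing the weak formula of Lemma \ref{Lemma:2.2} with $\psi=U\phi^2$, which lies only in $H^1(\overline{\HS},t^{1-2s}dX)$ with compact support rather than in $C^1_c(\overline{\HS})$. I expect this to follow from a standard approximation argument in the weighted $H^1$-norm: one mollifies $U\phi^2$ as in the proof of Lemma \ref{Lemma:2.2} and passes to the limit using $U\in H^1_{\rm loc}(\overline{\HS},t^{1-2s}dX)$ for the bulk integral, and the trace theory $H^1(\overline{\HS},t^{1-2s}dX)\to H^s(\RN)$ together with $u\in L^\infty_{\rm loc}(\RN)$ and the finiteness of $\int_{\RN}|x|^\ell|u|^{p+1}\phi^2(x,0)\,dx$ for the boundary integral.
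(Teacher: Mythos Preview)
Your argument is correct and the $L^{p+1}$-estimate matches the paper's. For the gradient bound, however, you take a genuinely different route. The paper chooses a cut-off $\zeta$ that \emph{vanishes on $B^+_{R_0}$} and invokes the stability of $U$ outside $B^+_{R_0}$ a second time (via \eqref{eq:2.65} with $\psi=U\zeta$) to eliminate the nonlinear boundary term altogether, obtaining the pure Caccioppoli bound $\int t^{1-2s}|\nabla(U\zeta)|^2\le \tfrac{p}{p-1}\int t^{1-2s}U^2|\nabla\zeta|^2$; the region $B^+_{2R_0}$ is then handled separately using $U\in \Hloc(\overline{\HS},t^{1-2s}dX)$. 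Your approach instead tests the equation directly with $U\phi^2$ (no second use of stability), keeps the nonlinear term, and bounds it by the already established $L^{p+1}$-estimate at radius $2R$. This is arguably more elementary and avoids splitting off the inner ball, at the cost of (i) needing the $L^{p+1}$-bound at a doubled radius and (ii) the density argument you flag, since your cut-off does not vanish near the origin where $\nabla_x U$ need not be continuous. That density step is routine: approximate $U\phi^2$ in $H^1(\overline{\HS},t^{1-2s}dX)$ by $C^1_c$ functions, and for the boundary term note that $|x|^\ell|u|^{p-1}u\in L^{2N/(N+2s)}_{\rm loc}(\RN)$ (because $u\in L^\infty_{\rm loc}$ and $\ell>-2s>-(N+2s)/2$ by $N>2s$), which is the dual exponent to the trace embedding $H^s\hookrightarrow L^{2N/(N-2s)}$.
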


	\begin{proof}
We first prove the weighted $L^{p+1}$ estimate for $u$. 
Let $\eta_R$ and $\rho_R$ be the functions given in Lemma~\ref{Lemma:2.5}.
Then, since $u\in L^\infty_{\rm loc}(\mathbb R^N)$ with \eqref{eq:1.2}, 
it holds that
\begin{equation}
\label{eq:2.61}
\int_{B_{2R_0}}|x|^\ell|u|^{p+1}\, dx\le C.
\end{equation}
Furthermore, since $p_S(N,\ell) < p$ and \eqref{eq:2.53}, 
applying Lemmata~\ref{Lemma:2.6} and \ref{Lemma:2.7},  
and noting $\eta_R \geq c_0>0$ on $B_R \setminus B_{2R_0}$, 
we see that for all $R \geq 3R_0$, 
\begin{align*}
\int_{B_R\setminus B_{2R_0}}|x|^\ell|u|^{p+1}\, dx
& \le 
C\int_{\mathbb R^N}|x|^\ell|u|^{p+1}\eta_R^2 \,dx
\\
&
\le C\int_{\mathbb R^N}u^2\rho_R\,dx
\le C\bigg(1+R^{N-\frac{2}{p-1}(s(p+1)+\ell)}\bigg)
\le CR^{N-\frac{2}{p-1}(s(p+1)+\ell)}.
\end{align*}
This together with \eqref{eq:2.61} yields
\begin{equation}
\label{eq:2.62}
\int_{B_R}|x|^\ell|u|^{p+1}\,dx
\le CR^{N-\frac{2}{p-1}(s(p+1)+\ell)} 
\quad \text{for each $R \geq 3R_0$}. 
\end{equation}

	Next we take a cut-off function 
$\zeta\in C^\infty_c (\overline{\mathbb R^{N+1}_+} \setminus \overline{B^+_{R_0}} )$ such that 
	\begin{equation}\label{eq:2.63}
		\zeta\equiv
		\left\{
		\begin{array}{ll}
		1& \mbox{on}\quad B^+_R\setminus B^+_{2R_0},
		\vspace{5pt}\\
		0& \mbox{on}\quad B^+_{R_0}\cup( \mathbb R^{N+1}_+ \setminus B^+_{2R}),
		\end{array}
		\right.
		\qquad
		|\nabla \zeta|\le CR^{-1}
		\quad\mbox{on}\quad B^+_{2R}\setminus B^+_{R}.
	\end{equation}
Then, taking $\psi = U\zeta^2 \in C^1_c ( \overline{ \HS} \setminus \overline{ B^+_{R_0}} )$ 
as a test function in \eqref{eq:2.18}, 
we obtain 
\begin{equation}
\label{eq:2.64}
\begin{aligned}
\kappa_s\int_{ \mathbb R^N }|x|^\ell|u|^{p+1}\zeta(x,0)^2\,dx
&
=\int_{\mathbb R^{N+1}_+}t^{1-2s}\nabla U\cdot\nabla(U\zeta^2)\,dX
\\
&
=\int_{\mathbb R^{N+1}_+}t^{1-2s}\{|\nabla(U\zeta)|^2-U^2|\nabla\zeta|^2\}\,dX.
\end{aligned}
\end{equation}
Since $u$ is stable outside $B_{R_0}$ and $U=u$ on $\partial\mathbb R^{N+1}_+$,
we see from \eqref{eq:2.8} that $U$ is stable outside $B^+_{R_0}$, that is, 
for any $\psi \in C^1_c( \overline{ \HS } \setminus \overline{B^+_{R_0}} )$,
\begin{equation}
\label{eq:2.65}
\begin{aligned}
p\kappa_s\int_{\RN}|x|^\ell|U(x,0)|^{p-1}\psi(x,0)^2\,dx
&=
p\kappa_s\int_{\RN}|x|^\ell|u|^{p-1}\psi(x,0)^2\,dx
\\
& \leq \kappa_s \| \psi (\cdot, 0) \|_{\dHs(\RN)}^2 
\le\int_{\mathbb R^{N+1}_+}t^{1-2s}|\nabla \psi|^2\,dX. 
\end{aligned}
\end{equation}
By \eqref{eq:2.64} and \eqref{eq:2.65} with 
$\psi=U\zeta \in C^1_c( \overline{ \HS} \setminus \overline{B^+_{R_0}} )$, we have 
\[
\int_{\mathbb R^{N+1}_+}t^{1-2s}\{|\nabla(U\zeta)|^2-U^2|\nabla\zeta|^2\}\,dX
\leq 
\frac{1}{p}\int_{\mathbb R^{N+1}_+}t^{1-2s}|\nabla(U\zeta)|^2\, dX,
\]
which implies 
\begin{equation}
\label{eq:2.66}
\int_{\mathbb R^{N+1}_+}t^{1-2s}|\nabla(U\zeta)|^2\,dX
\le
\frac{p}{p-1}\int_{\mathbb R^{N+1}_+}t^{1-2s}U^2|\nabla\zeta|^2\,dX.
\end{equation}
By \eqref{eq:2.66}, \eqref{eq:2.63} and \eqref{eq:2.52}, we see that
\begin{equation}
\label{eq:2.67}
\begin{aligned}
\int_{B^+_R\setminus B^+_{2R_0}}t^{1-2s}|\nabla U|^2\,dX
&\le 
\int_{\mathbb R^{N+1}_+}t^{1-2s}|\nabla(U\zeta)|^2\,dX
\\
&
\le C\int_{\mathbb R^{N+1}_+}t^{1-2s}U^2|\nabla\zeta|^2\,dX
\\
&
\le C \left( \int_{B^+_{2R_0} \setminus B_{R_0}^+ } t^{1-2s} U^2 \, dX 
+  R^{-2}\int_{ B^+_{2R} \setminus B_R^+ }t^{1-2s}U^2\,dX \right)
\\
&
\le 
C \int_{B^+_{2R_0} \setminus B_{R_0}^+ } t^{1-2s} U^2 \, dX 
+ CR^{N-\frac{2}{p-1}(s(p+1)+\ell)}
\end{aligned}
\end{equation}
for all $R\ge3R_0$.

	On the other hand,
it follows from $U\in \Hloc ( \overline{\HS},t^{1-2s}dX)$ due to Lemma \ref{Lemma:2.1} that 
\[
\int_{B^+_{2R_0}}t^{1-2s} \left( |\nabla U|^2 + U^2 \right)\,dX\le C.
\]
This together with \eqref{eq:2.62} and \eqref{eq:2.67} yields \eqref{eq:2.60},
thus Lemma~\ref{Lemma:2.9} follows.
	\end{proof}
%

%%%%%%%%%%%%%%%%%%%%%%%%%%%%%%%%%%%%%
%%%%%%%%%%%%%%%%%%%%%%%%%%%%%%%%%%%%%
\section{The subcritical and critical case}
\label{section:3}
%%%%%%%%%%%%%%%%%%%%%%%%%%%%%%%%%%%%%
%%%%%%%%%%%%%%%%%%%%%%%%%%%%%%%%%%%%%
%
In this section,
we prove Theorem~\ref{Theorem:1.1} for the subcritical and critical case, 
that is, $1 < p \leq p_S(N,\ell)$.

	\begin{proof}[Proof of Theorem~\ref{Theorem:1.1} for $1<p\le p_S(N,\ell)$]
Let $u \in \Hsloc (\RN) \cap L^\infty_{\rm loc} (\RN) \cap L^2( \RN , (1+|x|)^{-N-2s} dx ) $ 
be a solution of \eqref{eq:1.1} which is stable outside $B_{R_0}$. 
As $R \to \infty$, $\eta_R(x) \to \psi(R_0^{-1} x)$ for each $x \in \RN$. 
Then by Lemma \ref{Lemma:2.5}, $(\rho_R)_{R\ge R_0}$ is bounded in $L^\infty(\RN)$ and 
we may check 
	\[
		\rho_R(x) = \int_{  \RN} \frac{ \left( \eta_R(x) - \eta_R(y) \right)^2 }{|x-y|^{N+2s}} \, dy 
		\to \int_{  \RN} \frac{ \left( \psi(R_0^{-1} x) - \psi ( R_0^{-1} y )  \right)^2 }{|x-y|^{N+2s}} \, dy 
		=: \rho_\infty (x).
	\]

	Next, from Lemmata \ref{Lemma:2.6} and \ref{Lemma:2.7} and 
the assumption $1 < p \leq p_S(N,\ell)$, it follows that 
$( u \eta_R^{2/(p+1)} )_{R \geq 3R_0}$ is bounded in $L^{p+1} (\RN,|x|^\ell dx)$ and 
$(u \eta_R)_{R \geq R_0}$ is bounded in $\dHs(\RN)$. 
Since 
	\[
		u(x) \eta_R(x)^{ \frac{2}{p+1} } \to u(x) \psi \left( R_0^{-1} x \right)^{\frac{2}{p+1}}, \quad 
		u(x) \eta_R(x) \to u(x) \psi \left( R_0^{-1} x \right) 
		\quad \text{for each $x \in \RN$}, 
	\]
we infer that 
	\[
		\begin{aligned}
			&u \eta_R^{\frac{2}{p+1}} \rightharpoonup u \psi \left( R_0^{-1} \cdot \right)^{\frac{2}{p+1}} 
			& &\text{weakly in} \ L^{p+1} (\RN, |x|^\ell dx),
			\\
			& u \eta_R \rightharpoonup u \psi \left( R_0^{-1} \cdot \right) 
			& &\text{weakly in} \ \dHs(\RN).
		\end{aligned}
	\]
In particular, we deduce that $u \in \dHs(\RN) \cap L^{p+1}(\RN,|x|^\ell dx)$.

	Since $\varphi_n u \to u$ strongly in $\dHs(\RN)$ where $(\varphi_n)_n$ appears in Lemma \ref{Lemma:2.1} 
and $u \in L^\infty_{\rm loc}(\RN)$, 
we may use $\varphi_n u$ as a test function in \eqref{eq:1.4}: 
	\[
		\int_{  \RN} |x|^\ell |u|^{p+1} \varphi_n \, dx 
		= \la u , \varphi_n u \ra_{\dHs(\RN)}.
	\]
Letting $n \to \infty$, we obtain 
\begin{equation}
\label{eq:3.1}
\int_{\mathbb R^N}|x|^\ell|u|^{p+1}\, dx=\|u\|_{\dot H^s(\mathbb R^N)}^2.
\end{equation}
Thus, the former assertion of Theorem \ref{Theorem:1.1}(ii) is proved.

	For the latter assertion of Theorem \ref{Theorem:1.1} (ii), 
assume that $p = p_S(N,\ell)$ and $u$ is stable. 
By the same argument as above, 
we can apply the stability inequality \eqref{eq:1.9} with the test function $\varphi=u$:
\[
p\int_{\mathbb R^N}|x|^\ell|u|^{p+1}\, dx\le\|u\|_{\dot H^s(\mathbb R^N)}^2.
\]
This contradicts \eqref{eq:3.1} unless $u\equiv0$.
So it remains to prove the subcritical case.

	Since $u\in\dot H^s(\mathbb R^N)  $, notice that 
$\nabla U\in L^2(\mathbb R^{N+1}_+,t^{1-2s}dX)$ thanks to Remark \ref{Remark:2.1}. 
Then, similarly to \eqref{eq:2.25} with $u \in L^{p+1}(\RN, |x|^\ell dx)$, 
we claim that there exists a sequence $R_n\to\infty$ such that
\begin{equation}
\label{eq:3.2}
\lim_{n\to\infty}R_n \left[
\int_{S^+_{R_n}}t^{1-2s}|\nabla U|^2\,dS
+\int_{S_{R_n}}|x|^\ell|u|^{p+1}\, d\omega
+\int_{S^+_{R_n}}t^{1-2s}\left|\frac{\partial U}{\partial \nu}\right|^2\,dS\right]=0.
\end{equation}
By \eqref{eq:2.21}, \eqref{eq:3.2} and replacing $R$ with $R_n$ for a sequence $R_n\to\infty$, 
we conclude that
\[
\int_{\mathbb R^{N+1}_+}t^{1-2s}|\nabla U|^2\,dX
=\frac{2\kappa_s}{N-2s}\frac{N+\ell}{p+1}\int_{\RN}|x|^\ell|u|^{p+1}\,dx.
\]
This together with \eqref{eq:2.7} yields the following Pohozaev identity
\[
\frac{N+\ell}{p+1}\int_{\mathbb R^N}|x|^\ell|u|^{p+1}\, dx=\frac{N-2s}{2}\|u\|_{\dot H^s(\mathbb R^N)}^2.
\]
Combining this identity with \eqref{eq:3.1}, we observe that $u\equiv0$ for $p<p_S(N,\ell)$,
and the proof of Theorem~\ref{Theorem:1.1} is completed for $ 1 < p \leq p_S(N,\ell)$. 
	\end{proof}

%
%%%%%%%%%%%%%%%%%%%%%%%%%%%%%%%%%%%%%
%%%%%%%%%%%%%%%%%%%%%%%%%%%%%%%%%%%%%
\section{The supercritical case}
\label{section:4}
%%%%%%%%%%%%%%%%%%%%%%%%%%%%%%%%%%%%%
%%%%%%%%%%%%%%%%%%%%%%%%%%%%%%%%%%%%%
%

In this section, 
we follow the argument in \cite{DDW} basically. 
However, due to the regularity issue of $U$ 
around $0$ in $\overline{ \HS }$, 
we prove the monotonicity formula (Lemma \ref{Lemma:4.2}) 
via the argument in \cite[section 3]{FF} and prove Theorem \ref{Theorem:1.1} for $p_S(N,\ell) < p$.

For $X\in \mathbb R^{N+1}_+$, we use the following notation:
	\[
		r := |X|, \quad \sigma := \frac{X}{|X|} \in S^+_1, \quad 
		\sigma_{N+1} := \frac{t}{|X|}.
	\]
Let $u \in \Hsloc (\RN) \cap L^\infty_{\rm loc} (\RN) \cap L^1(\RN, (1+|x|)^{-N-2s}dx) $ 
be a solution of \eqref{eq:1.1} and $U$ be the function given in \eqref{eq:2.3}.
For every $\lambda>0$, we define
\begin{equation}
\label{eq:4.1}
D(U;\lambda):=\lambda^{-(N-2s)}\left[\frac{1}{2}\int_{B^+_\lambda}t^{1-2s}|\nabla U|^2\,dX
-\frac{\kappa_s}{p+1}\int_{B_\lambda}|x|^\ell|u|^{p+1}\,dx\right]
\end{equation}
and
\begin{equation}
\label{eq:4.2}
H(U;\lambda):=\lambda^{-(N+1-2s)}\int_{S^+_\lambda}t^{1-2s}U^2\,dS
=\int_{S^+_1} \sigma_{N+1}^{1-2s}U(\lambda \sigma )^2\,dS. 
\end{equation}

\begin{lemma}
\label{Lemma:4.1}
As a function of $\lambda$, $D,H \in C^1((0,\infty))$ and 
	\[
		\partial_{\lambda} D(U;\lambda)
		=\lambda^{-(N-2s)}\int_{S^+_\lambda}t^{1-2s} \left|\frac{\partial U}{\partial\nu} \right|^2\,dS
		-\lambda^{-(N+1-2s)}\kappa_s\frac{2s+\ell}{p+1}\int_{B_\lambda}|x|^\ell|u|^{p+1}\,dx
	\]
and 
	\[
		\begin{aligned}
			\partial_{\lambda} H (U;\lambda)
			&=2\lambda^{-(N+1-2s)}\int_{S^+_\lambda}t^{1-2s}U\frac{\partial U}{\partial \nu}\,dS 
			\\
			&= 2\lambda^{-(N+1-2s)} \left[ \int_{B^+_\lambda}t^{1-2s}|\nabla U|^2\,dX
			-\kappa_s\int_{B_\lambda}|x|^\ell|u|^{p+1}\,dx \right]. 
		\end{aligned}
	\]
\end{lemma}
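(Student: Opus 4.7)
The plan is to differentiate $D(U;\lambda)$ and $H(U;\lambda)$ directly and then use the Pohozaev-type identities \eqref{eq:2.21} and \eqref{eq:2.22} established in Proposition \ref{Proposition:2.2} to rewrite the resulting boundary terms.

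For $H(U;\lambda)$, I would start from the second expression in \eqref{eq:4.2}, obtained by the change of variables $X = \lambda\sigma$:
\[
H(U;\lambda) = \int_{S_1^+} \sigma_{N+1}^{1-2s}\, U(\lambda \sigma)^2 \, dS.
\]
Since $U\in C^\infty(\HS)$ by \eqref{eq:2.4} and the integrand is controlled near $\sigma_{N+1}=0$ by $U \in C^{\alpha}(\overline{B_R^+})$ (Lemma \ref{Lemma:2.3}), I can differentiate under the integral sign, obtaining
\[
\partial_\lambda H(U;\lambda) = 2\int_{S_1^+} \sigma_{N+1}^{1-2s}\, U(\lambda\sigma)\, \nabla U(\lambda\sigma)\cdot \sigma \, dS.
\]
Changing variables back (the Jacobian absorbs $\lambda^{N}$ and $\sigma_{N+1}^{1-2s} = \lambda^{-(1-2s)} t^{1-2s}$ on $S_\lambda^+$) yields the first formula. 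Then \eqref{eq:2.22} immediately converts the boundary integral into the claimed volume expression.

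For $D(U;\lambda)$, set
\[
E(\lambda) := \tfrac{1}{2}\int_{B^+_\lambda} t^{1-2s}|\nabla U|^2\, dX, \qquad F(\lambda) := \tfrac{\kappa_s}{p+1}\int_{B_\lambda} |x|^\ell |u|^{p+1}\, dx.
\]
The integrand $t^{1-2s}|\nabla U|^2$ lies in $L^1_{\mathrm{loc}}(\overline{\HS}, dX)$ by Lemma \ref{Lemma:2.1}(ii), and $|x|^\ell|u|^{p+1}\in L^1_{\mathrm{loc}}(\RN)$ by \eqref{eq:1.2} together with $u\in L^\infty_{\mathrm{loc}}$. Hence the coarea formula yields $E, F \in W^{1,1}_{\mathrm{loc}}(0,\infty)$ with
\[
E'(\lambda) = \tfrac12\int_{S^+_\lambda}t^{1-2s}|\nabla U|^2\,dS, \qquad F'(\lambda)=\tfrac{\kappa_s}{p+1}\int_{S_\lambda}|x|^\ell |u|^{p+1}\, d\omega,
\]
and continuity of $E'$ and $F'$ follows from the regularity of $U$ and $u$ on $S^+_\lambda$ for each fixed $\lambda>0$, so $D \in C^1$. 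Writing $D(U;\lambda) = \lambda^{-(N-2s)}[E(\lambda) - F(\lambda)]$ and differentiating,
\[
\partial_\lambda D = \lambda^{-(N-2s)-1}\bigl\{\lambda[E'(\lambda) - F'(\lambda)] - (N-2s)[E(\lambda) - F(\lambda)]\bigr\}.
\]
Now I apply the Pohozaev identity \eqref{eq:2.21} at radius $\lambda$: its left-hand side is exactly $\lambda[E'(\lambda) - F'(\lambda)] - (N-2s)[E(\lambda) - F(\lambda)] + (2s+\ell)F(\lambda)$ (collecting the $(N+\ell)/(p+1)$ and $1/(p+1)$ coefficients and using $(N+\ell) - (N-2s) = 2s+\ell$), while the right-hand side equals $\lambda\int_{S^+_\lambda} t^{1-2s}|\partial_\nu U|^2\, dS$. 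Solving,
\[
\lambda[E'-F']-(N-2s)[E-F] = \lambda\int_{S_\lambda^+} t^{1-2s}\bigl|\tfrac{\partial U}{\partial \nu}\bigr|^2\,dS - (2s+\ell)F(\lambda),
\]
and substituting back gives the formula for $\partial_\lambda D(U;\lambda)$.

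The main obstacle is the regularity needed to differentiate under the integral and to apply the Pohozaev identity at a single radius $\lambda$; in particular $U$ fails to be $C^1$ up to $\{t=0\}$ near the origin when $\ell<0$. This is precisely why Proposition \ref{Proposition:2.2} was proved via the approximation $\mathcal{O}_\delta$ with $\delta\to 0$ along a suitable subsequence, and why Lemma \ref{Lemma:2.3} gives $C^{\alpha_R}$ regularity up to the boundary away from $x=0$. These tools render both the coarea argument and the application of \eqref{eq:2.21}--\eqref{eq:2.22} legitimate for every $\lambda>0$.
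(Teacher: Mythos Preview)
Your proposal is correct and follows essentially the same approach as the paper: direct differentiation of $D$ and $H$ (via scaling for $H$, the coarea formula for $D$) followed by application of the Pohozaev identities \eqref{eq:2.21} and \eqref{eq:2.22}. The only refinement the paper adds is a more explicit domination bound for $\partial_\lambda H$, writing $\sigma_{N+1}^{1-2s}|\nabla U(\lambda\sigma)| \le \sigma_{N+1}^{1-2s}|\nabla_x U| + |t^{1-2s}\partial_t U|$ and invoking the $C^\alpha$ regularity of both pieces from Lemma~\ref{Lemma:2.3}, rather than $U\in C^\alpha$ alone.
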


	\begin{proof}
We first remark that by \eqref{eq:2.3} and Lemma \ref{Lemma:2.3}, 
$U \in C( \overline{ \HS} )$, 
$\nabla_x U \in C( \overline{ \HS } \setminus \{0\} )$ 
and $V := t^{1-2s} \partial_t U \in C ( \overline{ \HS } \setminus \{0\} )$. 
Hence, as a function of $\lambda$, 
	\[
		 \lambda \mapsto \int_{ B_\lambda} |x|^\ell |u|^{p+1} \, dx \in C^1( (0,\infty) ).
	\]
On the other hand, since 
	\[
		\begin{aligned}
			t^{1-2s} \left| \nabla U \right|^2 
			= t^{1-2s} \left| \nabla_x U \right|^2 + t^{2s-1} V^2
		\end{aligned}
	\]
and $t^{1-2s} \in L^1_{\rm loc} ( \overline{ \HS} )$, 
it is easy to see that 
	\[
		\lambda \mapsto \int_{ S_\lambda^+} t^{1-2s} \left| \nabla U \right|^2 \, dS 
		\in C((0,\infty)), \quad 
		\lambda \mapsto \int_{ B_\lambda^+ } t^{1-2s} \left| \nabla U \right|^2 \, dX 
		\in C^1( (0,\infty) ),
	\]
which yields $D(U;\lambda) \in C^1((0,\infty))$.

	On the other hand, for any $0< \lambda_1 < \lambda_2 < \infty$, 
there exists a $C_{\lambda_1,\lambda_2}>0$ such that 
for every $\lambda \in [\lambda_1,\lambda_2]$ and $\sigma \in S^+_1$
	\[
		\begin{aligned}
			\sigma_{N+1}^{1-2s}\left| \partial_{\lambda} ( U(\lambda \sigma ) )^2 \right| 
			\leq 2 \sigma_{N+1}^{1-2s} \left| U(\lambda \sigma) \right| \left| \nabla U(\lambda \sigma ) \right| 
			&\leq 2 \left| U(\lambda \sigma ) \right| 
			\left( \sigma_{N+1}^{1-2s} 
			\left| \nabla_x U(\lambda \sigma ) \right| + \left| V (\lambda \sigma) \right| \right)
			\\
			& \leq C_{\lambda_1,\lambda_2} (1 + \sigma_{N+1}^{1-2s}).
		\end{aligned}
	\]
Hence, the dominated convergence theorem gives 
$H (U;\lambda) \in C^1((0,\infty))$.

	Next we compute the derivative of $D$ and $H$. 
Direct computations and \eqref{eq:2.21} give 
	\[
		\begin{aligned}
			&\partial_{\lambda} D(U;\lambda) 
			\\
			= \ & - (N-2s) \lambda^{-(N-2s)-1} \left[ 
			\frac{1}{2} \int_{ B_\lambda^+ } t^{1-2s} | \nabla U |^2 \, dX 
			- \frac{\kappa_s}{p+1} \int_{ B_\lambda} |x|^\ell |u|^{p+1} \, dx 
			\right]
			\\
			& \quad + \lambda^{-(N-2s)-1} \lambda  
			\left[ \frac{1}{2} \int_{ S_\lambda^+} t^{1-2s} |\nabla U|^2 \, dS 
			- \frac{\kappa_s}{p+1} \int_{ S_\lambda} |x|^\ell |u|^{p+1} \, d\omega \right]
			\\
			=\ &  - (N-2s) \lambda^{-(N-2s)-1} \left[ 
			\frac{1}{2} \int_{ B_\lambda^+ } t^{1-2s} | \nabla U |^2 \, dX 
			- \frac{\kappa_s}{p+1} \int_{ B_\lambda} |x|^\ell |u|^{p+1} \, dx 
			\right]
			\\
			& \quad + \lambda^{-(N-2s)-1} 
			\left[ \frac{N-2s}{2} \int_{ B_\lambda^+} t^{1-2s} |\nabla U|^2 \, dX 
			- \kappa_s \frac{N+\ell}{p+1} \int_{ B_\lambda} |x|^\ell |u|^{p+1} \, dx 
			\right.
			\\
			& \hspace{3cm} \left.
			+ \lambda \int_{ S_\lambda^+} t^{1-2s} \left| \frac{\partial U}{\partial \nu} \right|^2 \, dS \right]
			\\
			= \ & \lambda^{-(N-2s)} \int_{ S_\lambda^+} t^{1-2s} \left| \frac{\partial U}{\partial \nu} \right|^2 \, dS 
			- \lambda^{-(N+1-2s)}\kappa_s\frac{2s+\ell}{p+1}\int_{B_\lambda}|x|^\ell|u|^{p+1}\,dx.
		\end{aligned}
	\]
For $H$, we compute similarly by using \eqref{eq:2.22} and 
$\nabla U(X) \cdot (X/|X|) = \partial U / \partial \nu$: 
	\[
		\begin{aligned}
			\partial_{\lambda} H(U;\lambda) 
			&= \int_{ S^+_1} 
			\sigma_{N+1}^{1-2s} 2 U(\lambda \sigma)   \nabla U  (\lambda \sigma) \cdot \sigma \, d S 
			\\
			&= 2 \lambda^{-N-1+2s} \int_{ S_\lambda^+} 
			t^{1-2s} U \frac{\partial U}{\partial \nu} \, dS 
			\\
			&= 2 \lambda^{-N-1+2s} 
			\left[ \int_{ B_\lambda^+ } t^{1-2s} |\nabla U|^2 \, dX 
			- \kappa_s \int_{ B_\lambda} |x|^\ell |u|^{p+1} \, dx \right].
		\end{aligned}
	\]
Hence, we complete the proof. 
	\end{proof}

Applying Lemma~\ref{Lemma:4.1}, we prove the following monotonicity formula 
(cf. \cite[Theorem 1.4]{DDW}).
\begin{lemma}
\label{Lemma:4.2}
For $\lambda>0$, define $E(U;\lambda)$ by 
\begin{equation}
\label{eq:4.3}
E(U;\lambda):=\lambda^{\frac{2(2s+\ell)}{p-1}} \left(D(U;\lambda)+\frac{2s+\ell}{2(p-1)}H(U;\lambda) \right).
\end{equation}
Then it holds that
\begin{equation}
\label{eq:4.4}
\partial_{\lambda} E(U;\lambda)
=\lambda^{\frac{2}{p-1}(s(p+1)+\ell)-N} 
\int_{S^+_\lambda} t^{1-2s} \left|\frac{2s+\ell}{p-1}\frac{U}{\lambda}+\frac{\partial U}{\partial r}\right|^2\,dS.
\end{equation}
\end{lemma}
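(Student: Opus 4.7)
The plan is to compute $\partial_\lambda E(U;\lambda)$ directly from the definitions \eqref{eq:4.1}--\eqref{eq:4.3} by applying the Leibniz rule, substituting the derivative formulas from Lemma \ref{Lemma:4.1}, and then matching the result term-by-term with the right-hand side of \eqref{eq:4.4} after expanding the square and invoking the Pohozaev-type identity \eqref{eq:2.22}.

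For clarity, let me set $\beta := \frac{2s+\ell}{p-1}$ so that $E(U;\lambda) = \lambda^{2\beta}\left(D(U;\lambda) + \frac{\beta}{2} H(U;\lambda)\right)$, and abbreviate
\[
A(\lambda) := \int_{B^+_\lambda} t^{1-2s}|\nabla U|^2\,dX, \quad B(\lambda) := \int_{B_\lambda} |x|^\ell |u|^{p+1}\,dx,
\]
\[
C(\lambda) := \int_{S^+_\lambda} t^{1-2s} U^2\,dS, \quad P(\lambda) := \int_{S^+_\lambda} t^{1-2s}\left|\frac{\partial U}{\partial\nu}\right|^2\,dS.
\]
First I would differentiate $E$ using the product rule, yielding
\[
\partial_\lambda E = 2\beta\lambda^{2\beta-1}\left(D + \tfrac{\beta}{2}H\right) + \lambda^{2\beta}\left(\partial_\lambda D + \tfrac{\beta}{2}\partial_\lambda H\right),
\]
then substitute the explicit expressions for $D,H,\partial_\lambda D,\partial_\lambda H$ from \eqref{eq:4.1}, \eqref{eq:4.2} and Lemma \ref{Lemma:4.1}. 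After grouping by $A,B,C,P$ and pulling out the common factor $\lambda^{2\beta - N + 2s}$, the $A$-terms contribute $2\beta\lambda^{-1}A$, the $C$-term contributes $\beta^2\lambda^{-2}C$, the $P$-term contributes $P$, and the $B$-terms collapse to $-2\beta\kappa_s\lambda^{-1}B$ using the algebraic identity $\frac{2}{p+1} + \frac{p-1}{p+1} + 1 = 2$, which is where the relation $\beta(p-1) = 2s+\ell$ is used.

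On the other side, I would expand the integrand on the RHS of \eqref{eq:4.4}. On $S^+_\lambda$ we have $\partial U/\partial r = \partial U/\partial\nu$, so
\[
\int_{S^+_\lambda} t^{1-2s}\left|\tfrac{\beta}{\lambda}U + \tfrac{\partial U}{\partial r}\right|^2\,dS
= \tfrac{\beta^2}{\lambda^2}C + \tfrac{2\beta}{\lambda}\int_{S^+_\lambda} t^{1-2s}U\,\tfrac{\partial U}{\partial\nu}\,dS + P.
\]
The crucial step is now to apply the Pohozaev identity \eqref{eq:2.22} to rewrite the middle boundary integral as $A - \kappa_s B$. Combined with the fact that the overall power $\frac{2}{p-1}(s(p+1)+\ell) - N$ coincides with $2\beta - N + 2s$ (a direct check), the RHS of \eqref{eq:4.4} becomes
\[
\lambda^{2\beta - N + 2s}\left[\tfrac{\beta^2}{\lambda^2}C + \tfrac{2\beta}{\lambda}(A - \kappa_s B) + P\right],
\]
which matches the expression derived for $\partial_\lambda E$ above.

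There is no real obstacle beyond careful bookkeeping; the computation is ultimately the same one carried out in \cite[Theorem 1.4]{DDW}, and the ingredients (regularity of $U$ away from the origin via Lemma \ref{Lemma:2.3}, differentiability of $D$ and $H$ via Lemma \ref{Lemma:4.1}, and the Pohozaev identity \eqref{eq:2.22}) have already been established. The only point requiring attention is the coefficient matching in the $B$-terms, which forces the choice of the constant $\frac{2s+\ell}{2(p-1)}$ in front of $H$ in the definition \eqref{eq:4.3}, and is precisely the reason $E$ produces a sign-definite derivative.
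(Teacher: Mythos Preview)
Your proposal is correct and follows essentially the same approach as the paper: both compute $\partial_\lambda E$ via the product rule, substitute the formulas from Lemma~\ref{Lemma:4.1}, and rely on the Pohozaev identity \eqref{eq:2.22} together with the algebraic cancellation (your identity $\tfrac{2}{p+1}+\tfrac{p-1}{p+1}+1=2$ is equivalent to the paper's $(\tfrac{1}{2}-\tfrac{1}{p+1})\gamma-\tfrac{2s+\ell}{p+1}=0$). The only cosmetic difference is that the paper simplifies $\partial_\lambda E$ directly into the perfect-square form, whereas you expand the right-hand side of \eqref{eq:4.4} separately and match the two; the content is identical.
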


	\begin{proof}
Put
\begin{equation}
\label{eq:4.5}
\gamma:=\frac{2(2s+\ell)}{p-1}.
\end{equation}
By \eqref{eq:4.3} and \eqref{eq:4.5}, we have 
\begin{equation}
\label{eq:4.6}
\begin{aligned}
\partial_{\lambda} E(U;\lambda)
&
=\gamma \lambda^{\gamma-1}\left(D(U;\lambda)+\frac{\gamma}{4}H(U;\lambda)\right)
+\lambda^{\gamma}\left( \partial_{\lambda} D(U;\lambda)+\frac{\gamma}{4} \partial_{\lambda} H(U;\lambda)\right)
\\
&
=\lambda^{\gamma-1}
\left(\gamma D(U;\lambda)+\frac{\gamma^2}{4}H(U;\lambda)+\lambda \partial_{\lambda} D(U;\lambda) 
+\frac{\gamma\lambda}{4} \partial_{\lambda}H(U;\lambda) \right).
\end{aligned}
\end{equation}
Since it follows from \eqref{eq:4.5} that
\[
\bigg(\frac{1}{2}-\frac{1}{p+1}\bigg)\gamma-\frac{2s+\ell}{p+1}
=\frac{p-1}{2(p+1)}\frac{2(2s+\ell)}{p-1}-\frac{2s+\ell}{p+1}=0,
\]
by Lemma~\ref{Lemma:4.1} and \eqref{eq:2.22}, 
we see that
\begin{align*}
&
\lambda^{N-2s}
\left( \gamma D(U;\lambda)+\frac{\gamma^2}{4}H(U;\lambda)
+\lambda \partial_{\lambda} D(U;\lambda) 
+\frac{\gamma \lambda}{4} \partial_{\lambda}H(U;\lambda)\right)
\\
= \ & 
\gamma \left[\frac{1}{2}\int_{B^+_\lambda}t^{1-2s}|\nabla U|^2\,dX
-\frac{\kappa_s}{p+1}\int_{B_\lambda}|x|^\ell|u|^{p+1}\,dx \right]
+\frac{\gamma^2}{4}\lambda^{-1}\int_{S^+_\lambda}t^{1-2s}U^2\,dS
\\
&\quad
+\lambda\int_{S^+_\lambda}t^{1-2s} \left|\frac{\partial U}{\partial\nu} \right|^2\,dS
-\kappa_s\frac{2s+\ell}{p+1}\int_{B_\lambda}|x|^\ell|u|^{p+1}\,dx
+\frac{\gamma}{2}\int_{S^+_\lambda}t^{1-2s}U\frac{\partial U}{\partial\nu}\,dS
\\
= \ & \lambda \left[ \int_{ S_\lambda^+} t^{1-2s} 
\left\{ \frac{\gamma^2}{4} \left( \frac{U}{\lambda} \right)^2 
+ \gamma \frac{U}{\lambda} \frac{\partial U}{\partial \nu}
+ \left( \frac{\partial U}{\partial \nu} \right)^2
 \right\} \, dS \right]
 \\
 &\quad +\kappa_s\bigg\{\bigg(\frac{1}{2}-\frac{1}{p+1}\bigg) \gamma-\frac{2s+\ell}{p+1}\bigg\}
 \int_{B_\lambda}|x|^\ell|u|^{p+1}\,dx
\\
= \ &\lambda\int_{S^+_\lambda}t^{1-2s}\bigg|\frac{\gamma}{2}\frac{U}{\lambda}+\frac{\partial U}{\partial\nu}\bigg|^2\,dS.
\end{align*}
This together with \eqref{eq:4.6} and $\partial U/\partial\nu=\partial U/\partial r$ on $S^+_\lambda$ 
implies \eqref{eq:4.4}. 
	\end{proof}

Similar to \cite[Theorem~5.1]{DDW}, we prove the nonexistence result of 
solutions which have a special form and are stable outside $B_{R_0}^+$.

\begin{lemma}
\label{Lemma:4.3}
Let $R_0>0$ and $p_S(N,\ell)<p$. Suppose \eqref{eq:1.11} and that 
$W$ satisfies the following: 
	\begin{equation}\label{eq:4.7}
		\left\{\begin{aligned}
			& W(X) = r^{ - \frac{2s+\ell}{p-1} } \psi (\sigma) \in \Hloc ( \overline{ \HS} ,t^{1-2s} dX ) , 
			\\
			&\psi(\omega,0):=\psi|_{ \partial S^{+}_1}  \in L^{p+1} ( \partial S^{+}_1 ), 
			\\
			&  \int_{  \HS} t^{1-2s} \nabla W \cdot \nabla \Phi \, dX 
			= \kappa_s \int_{  \RN} |x|^\ell |W(x,0)|^{p-1} W(x,0) \Phi(x,0) \, dx
			\\
			&\hspace{5cm} \text{for each $\Phi \in C^1_c( \overline{ \HS } )$},
			\\
			& \kappa_s p \int_{\mathbb R^N} |x|^\ell |W(x,0)|^{p-1} \Phi(x,0)^2 \, dx 
			\leq \int_{  \HS} t^{1-2s} \left| \nabla \Phi \right|^2 \, dX 
			\\
			&\hspace{5cm} \text{for each $\Phi \in C^1_c( \overline{ \HS } \setminus \overline{ B^+_{R_0}} )$}.
		\end{aligned}\right.
	\end{equation}
Then $W \equiv 0$. 
\end{lemma}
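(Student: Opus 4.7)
The strategy is to reduce both the stability inequality and the equation on $\HS$ to a coupled system on the half-sphere $S_1^+$ using the self-similarity of $W$, and then combine the resulting angular identities with the sharp fractional Hardy inequality to force $\psi \equiv 0$ precisely under hypothesis \eqref{eq:1.11}. The argument follows closely the approach of D\'avila, Dupaigne, and Wei for $\ell = 0$, with modifications to accommodate the weight $|x|^\ell$.

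First I would upgrade the stability via scaling. The homogeneity $W(\lambda X) = \lambda^{-(2s+\ell)/(p-1)}W(X)$, together with the change of variables $\Phi \mapsto \Phi_\lambda(X) := \lambda^{(N-2s)/2}\Phi(\lambda X)$, shows that stability outside $\overline{B_{R_0}^+}$ upgrades to stability for every $\Phi \in C^1_c(\overline{\HS}\setminus\{0\})$.

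Set $\gamma := 2(2s+\ell)/(p-1)$, $\alpha := \tfrac{N-2s}{2} - \tfrac{\gamma}{2} \in (0, \tfrac{N-2s}{2})$, and $C_\alpha := \tfrac{(N-2s)^2}{4}-\alpha^2$. Test the equation and the stability inequality from \eqref{eq:4.7} with functions $\Phi(X) = r^{-\gamma/2}\Theta(\sigma)\eta(r)$, where $\Theta \in C^1(\overline{S_1^+})$ and $\eta \in C^1_c((0,\infty))$. Using the polar decomposition $\nabla = \hat r\,\partial_r + r^{-1}\nabla_\sigma$ and an integration by parts that absorbs the cross term via the identity $\int_0^\infty r^{2\alpha}\eta\eta'\,dr = -\alpha\int_0^\infty r^{2\alpha-1}\eta^2\,dr$, one derives the weak angular equation
\begin{equation*}
C_\alpha \int_{S_1^+}\!\sigma_{N+1}^{1-2s}\psi\Theta\,d\sigma + \int_{S_1^+}\!\sigma_{N+1}^{1-2s}\nabla_\sigma\psi\cdot\nabla_\sigma\Theta\,d\sigma = \kappa_s\int_{\partial S_1^+}\!|\psi|^{p-1}\psi\,\Theta(\omega,0)\,d\omega \qquad (\mathrm{EQ})
\end{equation*}
valid for every admissible $\Theta$. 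The stability inequality, after minimizing the ratio $\int_0^\infty r^{2\alpha+1}\eta'^2\,dr \,/\, \int_0^\infty r^{2\alpha-1}\eta^2\,dr$ via the sharp one-dimensional weighted Hardy constant $\alpha^2$, yields (noting $C_\alpha + \alpha^2 = (N-2s)^2/4$)
\begin{equation*}
\kappa_s p \int_{\partial S_1^+}\!|\psi|^{p-1}\Theta(\omega,0)^2\,d\omega \le \tfrac{(N-2s)^2}{4}\int_{S_1^+}\!\sigma_{N+1}^{1-2s}\Theta^2\,d\sigma + \int_{S_1^+}\!\sigma_{N+1}^{1-2s}|\nabla_\sigma\Theta|^2\,d\sigma. \qquad (\mathrm{ST})
\end{equation*}
Choosing $\Theta = \psi$ in both (EQ) and (ST) and subtracting (EQ) from (ST) yields the key coupling
\begin{equation*}
(p-1)\kappa_s\int_{\partial S_1^+}|\psi|^{p+1}\,d\omega \le \alpha^2\int_{S_1^+}\sigma_{N+1}^{1-2s}\psi^2\,d\sigma.
\end{equation*}

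Finally, I would combine this last inequality with the sharp fractional Hardy inequality on $S_1^+$ at indices $\alpha$ and $0$ (namely $\kappa_s\lambda(\beta)\int_{\partial S_1^+}\Theta(\omega,0)^2\,d\omega \le (\tfrac{(N-2s)^2}{4}-\beta^2)\int_{S_1^+}\sigma_{N+1}^{1-2s}\Theta^2\,d\sigma + \int_{S_1^+}\sigma_{N+1}^{1-2s}|\nabla_\sigma\Theta|^2\,d\sigma$ for $\beta \in \{0,\alpha\}$, which encode Fall's identity $(-\Delta)^s v_\beta = \lambda(\beta)|x|^{-2s}v_\beta$ in the extension picture). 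Applied to $\Theta = \psi$ together with (EQ), this chain of inequalities becomes incompatible with $\psi(\omega,0)\not\equiv 0$ exactly under hypothesis \eqref{eq:1.11}, namely $p\lambda(\alpha) > \lambda(0)$. Hence $\psi(\omega,0)\equiv 0$ on $\partial S_1^+$, and substituting back into (EQ) with $\Theta = \psi$ forces $C_\alpha\int\sigma_{N+1}^{1-2s}\psi^2\,d\sigma + \int\sigma_{N+1}^{1-2s}|\nabla_\sigma\psi|^2\,d\sigma = 0$; since $C_\alpha > 0$, this gives $\psi \equiv 0$ in $S_1^+$, and therefore $W \equiv 0$.

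The principal technical obstacle is the careful execution of the angular reduction producing (EQ) and (ST): the 1D Hardy infimum $\alpha^2$ is not attained, so (ST) must be obtained via an approximating sequence $\eta_n$, and the admissibility of the test function $r^{-\gamma/2}\Theta\eta$ in the equation hinges on the assumption $\psi(\omega,0)\in L^{p+1}(\partial S_1^+)$ in \eqref{eq:4.7}. The final step, identifying the sharp constant in the chain as $\kappa_s\lambda(\alpha)$, requires the spectral analysis of the Steklov-type eigenvalue problem for $-\mathrm{div}_{S_1^+}(\sigma_{N+1}^{1-2s}\nabla_\sigma\cdot) - C_\alpha\sigma_{N+1}^{1-2s}$ on $S_1^+$ with nonlocal boundary condition on $\partial S_1^+$, whose first eigenvalue is precisely $\kappa_s\lambda(\alpha)$; this is where hypothesis \eqref{eq:1.11} enters in a sharp way.
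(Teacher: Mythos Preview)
Your reduction to the half-sphere --- deriving the angular equation (EQ) and the stability inequality (ST) with constant $(N-2s)^2/4$ via the one-dimensional Hardy minimization --- is correct and coincides with Steps~1--2 of the paper's proof. The intermediate ``key coupling'' $(p-1)\kappa_s\int_{\partial S_1^+}|\psi|^{p+1}\,d\omega \le \alpha^2\int_{S_1^+}\sigma_{N+1}^{1-2s}\psi^2\,d\sigma$ is also a valid consequence of subtracting (EQ) from (ST).

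The gap is in the final step. With $\Theta=\psi$ alone, the trace-Hardy inequalities at levels $0$ and $\alpha$ together with (EQ) and (ST) do \emph{not} force $p\lambda(\alpha)\le\lambda(0)$. Writing $A=\int_{S_1^+}\sigma_{N+1}^{1-2s}\psi^2$, $B=\int_{S_1^+}\sigma_{N+1}^{1-2s}|\nabla_\sigma\psi|^2$, $P=\kappa_s\int_{\partial S_1^+}|\psi|^{p+1}$, $Q=\kappa_s\int_{\partial S_1^+}\psi^2$, your four inputs become $P=C_\alpha A+B$, $pP\le C_0A+B$, $\lambda(\alpha)Q\le P$, $\lambda(0)Q\le C_0A+B$; these yield only $p\lambda(\alpha)Q\le pP\le C_0A+B$ and $\lambda(0)Q\le C_0A+B$, which gives no comparison between $p\lambda(\alpha)$ and $\lambda(0)$. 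What the paper does instead (its Step~4) is test (ST) with $\varphi=\psi\,\phi_0/\phi_\alpha$, where $\phi_\beta$ is the angular profile of the Caffarelli--Silvestre extension of $|x|^{-(N-2s)/2+\beta}$ (so $\phi_\beta\equiv 1$ on $\partial S_1^+$, leaving the boundary term unchanged). One then uses the full ground-state \emph{identity}
\[
C_\beta\!\int_{S_1^+}\!\sigma_{N+1}^{1-2s}\varphi^2 + \int_{S_1^+}\!\sigma_{N+1}^{1-2s}|\nabla_\sigma\varphi|^2
=\kappa_s\lambda(\beta)\!\int_{\partial S_1^+}\!\varphi^2 + \int_{S_1^+}\!\sigma_{N+1}^{1-2s}\phi_\beta^2\Bigl|\nabla_\sigma\Bigl(\tfrac{\varphi}{\phi_\beta}\Bigr)\Bigr|^2
\]
at $\beta=0$ to rewrite the right-hand side, the pointwise comparison $\phi_0\le\phi_\alpha$ on $S_1^+$ (proved by the maximum principle) to pass from $\phi_0$ to $\phi_\alpha$ in the remainder, and the identity again at $\beta=\alpha$ to convert back. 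This Picone-type substitution, combined with (H$_\alpha$) and (EQ), yields $p\,\kappa_s\!\int|\psi|^{p+1}\le\tfrac{\lambda(0)}{\lambda(\alpha)}\,\kappa_s\!\int|\psi|^{p+1}$, hence $p\lambda(\alpha)\le\lambda(0)$ unless $\psi|_{\partial S_1^+}\equiv 0$. Your ``key coupling'' plays no role here; the missing structural ingredient is the comparison of the two ground-state profiles $\phi_0\le\phi_\alpha$ together with the choice of test function $\psi\phi_0/\phi_\alpha$ in (ST).
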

\begin{remark}\label{Remark:4.1}
	\begin{enumerate}
	\item 
	The function $W$ in Lemma \ref{Lemma:4.4} is not necessarily defined through the form 
	$W=P_s(\cdot, t) \ast u$ where $u$ is a solution of \eqref{eq:1.1}. 
	\item 
	By $p_S(N,\ell)<p$, we have
		\[
			\ell - \frac{p}{p-1} (2s+\ell) = - \frac{2sp + \ell }{p-1} > -N, 
			\quad |x|^\ell |W(x,0)|^{p-1} W(x,0) \in L^1_{\rm loc} (\RN).
		\]
	\item 
	Set 
		\[
			\begin{aligned}
				H^1( S^+_1 , \sigma_{N+1}^{1-2s}dS ) 
				&:= \overline{ C^1 ( \overline{ S^+_1 }  ) }^{ \| \cdot \|_{H^1(S^+_1 , \sigma_{N+1}^{1-2s}dS)} }, 
				\\
				\| u \|_{H^1(S^+_1, \sigma_{N+1}^{1-2s}dS)}^2 
				&:= \int_{ S^+_1 } \sigma_{N+1}^{1-2s} 
				\left[ \left| \nabla_{S^+_1} u \right|^2 + u^2 \right] dS
			\end{aligned}
		\]
	where $\nabla_{ S^+_1 }$ stands for the standard gradient on the unit sphere in $\R^{N+1}$. 
	From \cite[Lemma 2.2]{FF}, there exists the trace operator 
	$H^1(S_1^+,\sigma_{N+1}^{1-2s}dS) \to L^2( \partial S^1_+ )$. 
	\item 
	Since 
		\[
			- \diver \left( t^{1-2s} \nabla W \right) = 0 \quad \text{in} \ \HS, \quad 
			W \in \Hloc \left( \overline{ \HS },t^{1-2s}dX \right),
		\]
	elliptic regularity yields $W = r^{ - \frac{2s+\ell}{p-1} } \psi(\sigma) \in C^\infty (\HS)$. 
	In addition, from $W \in \Hloc ( \overline{ \HS},t^{1-2s}dX)$, 
	we see $\psi \in H^1(S^+_1,\sigma_{N+1}^{1-2s}dS)  $. Next, for $k \geq 1$, consider 
		\[
			W_k(X) := \max \left\{ -k , \, \min \left\{ |X|^{ \frac{2s+\ell}{p-1} } W(X), k  \right\} \right\}.
		\]
	Then 
		\[
			W_k \in H^1 ( \overline{ B^+_2 } \setminus \overline{ B^+_{1/2} },t^{1-2s}dX) \cap 
			L^\infty ( \overline{ B^+_2 } \setminus \overline{ B^+_{1/2} } ), \quad 
			\left| W_k(X) \right| \leq \left| X \right|^{ \frac{2s+\ell}{p-1} } \left| W(X) \right|.  
		\]
	
	From this fact, we may find $( \psi_k )_k$ satisfying 
		\begin{equation}\label{eq:4.8}
			\begin{aligned}
				& \psi_k \in H^1( S^+_1,\sigma_{N+1}^{1-2s}dS) \cap L^\infty(S^+_1), \quad 
				\left| \psi_k(\sigma) \right| \leq \left| \psi(\sigma) \right| 
				\quad \text{for any $\sigma \in S^+_1$}, 
				\\
				&
				\left\| \psi_k - \psi \right\|_{H^1(S^+_1,\sigma_{N+1}^{1-2s}dS)} \to 0, \quad 
				\psi_k (\omega,0) \to \psi(\omega,0) \quad 
				\text{strongly in } L^{p+1} (\partial S^+_1).
			\end{aligned}
		\end{equation}
	\item 
		If $\varphi \in H^1(S^+_1,\sigma_{N+1}^{1-2s}dS) \cap L^\infty(S_1^+)$, then 
		we may find $(\varphi_k) \subset C^1( \overline{S^+_1} )$ such that 
			\begin{equation}\label{eq:4.9}
				\sup_{k \geq 1} \| \varphi_k \|_{L^\infty( S^+_1 )} < \infty, \quad  
				\| \varphi_k - \varphi \|_{H^1( S^+_1,\sigma_{N+1}^{1-2s}dS )} \to 0. 
			\end{equation}
		From the trace operator, we also have $\varphi_k(\omega,0) \to \varphi (\omega,0)$ 
		in $L^2(\partial S^+_1)$. 
	\end{enumerate}

\end{remark}

Even though a proof of Lemma~\ref{Lemma:4.3} is similar to the proof of \cite[Theorem~5.1]{DDW},
for the sake of completeness, we give the proof here.
Before a proof of Lemma \ref{Lemma:4.3}, we recall \cite[Lemma 2.1]{FF}:

	\begin{lemma}\label{Lemma:4.4}
		For $v (X) = f(r) \psi(\sigma) \in C^\infty (\HS)$, 
		\[
		- \diver \left( t^{1-2s} \nabla v \right) = 
		-r^{-N} \left( r^{N+1-2s} f_r(r) \right)_r \sigma_{N+1}^{1-2s} \psi(\sigma)
		- r^{-1-2s} f(r) \diver_{S^+_1} \left( \sigma_{N+1}^{1-2s} \nabla_{S^+_1} \psi \right)
		\]
		where $\diver_{S^+_1}$ is the standard 
		divergence operator on the unit sphere in $\R^{N+1}$. 
	\end{lemma}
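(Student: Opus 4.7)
The plan is to derive the identity by a direct change of variables to spherical--polar coordinates on $\HS$, exploiting the fact that the weight $t^{1-2s}$ factorises cleanly in these coordinates. Throughout, I will use the notation $\hat{r} := X/|X|$ for the unit radial vector, and treat $\nabla_{S^+_1}$ and $\diver_{S^+_1}$ as the intrinsic gradient and divergence on the unit upper hemisphere with its induced round metric.

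First, since $v(X) = f(r)\psi(\sigma) \in C^\infty(\HS)$, the chain rule together with the standard split of the Euclidean gradient into radial and tangential parts yields
\[
\nabla v = f_r(r)\psi(\sigma)\,\hat{r} + \frac{f(r)}{r}\,\nabla_{S^+_1}\psi(\sigma),
\]
where the factor $r^{-1}$ accounts for the rescaling between the sphere of radius $r$ and the unit sphere. Next, using $t = r\sigma_{N+1}$ and the fact that $\sigma_{N+1}$ depends only on the angular variable, the weight factorises as $t^{1-2s} = r^{1-2s}\sigma_{N+1}^{1-2s}$, so
\[
t^{1-2s}\nabla v = r^{1-2s}\sigma_{N+1}^{1-2s}f_r(r)\psi(\sigma)\,\hat{r} + r^{-2s}\sigma_{N+1}^{1-2s}f(r)\,\nabla_{S^+_1}\psi.
\]

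The main computational step is then to apply the classical decomposition of the Euclidean divergence in $\mathbb{R}^{N+1}$: for a smooth vector field $F = F_r\hat{r} + F_T$ with $F_T$ tangent to the spheres $\{|X|=r\}$, one has
\[
\diver F = \frac{1}{r^N}\partial_r\bigl(r^N F_r\bigr) + \frac{1}{r}\diver_{S^+_1}(F_T).
\]
Feeding in the two summands above, the radial term contributes $\sigma_{N+1}^{1-2s}\psi(\sigma)\,r^{-N}\bigl(r^{N+1-2s}f_r\bigr)_r$, since $\sigma_{N+1}^{1-2s}\psi(\sigma)$ is $r$-independent and therefore passes outside $\partial_r$; the tangential term contributes $r^{-1-2s}f(r)\,\diver_{S^+_1}\bigl(\sigma_{N+1}^{1-2s}\nabla_{S^+_1}\psi\bigr)$, since $r^{-2s}f(r)$ is constant on each sphere $\{|X|=r\}$ and may be pulled through $\diver_{S^+_1}$. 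Summing these two contributions and multiplying by $-1$ reproduces the stated formula.

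There is essentially no conceptual obstacle — the identity is a pointwise algebraic consequence of the polar decomposition — but bookkeeping care is required with the factor $\sigma_{N+1}^{1-2s}$, which vanishes on $\partial S^+_1$ and cannot be pulled outside $\diver_{S^+_1}$ because it genuinely depends on $\sigma$. On the open set $\HS$ one has $\sigma_{N+1}>0$, so all the operations above are smooth and legitimate pointwise, and the two short computations described are all that is needed.
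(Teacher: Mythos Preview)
Your argument is correct: the polar decomposition $\nabla v = f_r\psi\,\hat r + r^{-1}f\,\nabla_{S^+_1}\psi$, the factorisation $t^{1-2s}=r^{1-2s}\sigma_{N+1}^{1-2s}$, and the polar divergence formula $\diver F = r^{-N}\partial_r(r^N F_r) + r^{-1}\diver_{S^+_1}(F_T)$ combine exactly as you describe to give the stated identity. The paper itself does not give a proof of this lemma but simply recalls it from Fall--Felli \cite[Lemma~2.1]{FF}; your direct computation is the natural way to establish it and matches what one finds there.
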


	\begin{proof}[Proof of Lemma \ref{Lemma:4.3}]
Let $W = r^{ - \frac{2s+\ell}{p-1} } \psi (\sigma) $ be as in the statement. 
We divide our arguments into several steps. 

\medskip

\noindent
\textbf{Step 1:} \textsl{$\psi$ satisfies 
	\[
		\left\{\begin{aligned}
			&- \diver_{S^+_1} \left( \sigma_{N+1}^{1-2s} \nabla_{S^+_1} \psi \right) 
			+ \beta \sigma_{N+1}^{1-2s} \psi = 0 & &\mathrm{in} \ S^+_1,
			\\
			&- \lim_{ \sigma_{N+1} \to +0} \sigma_{N+1}^{1-2s} \partial_{\sigma_{N+1}} \psi 
			= \kappa_s |\psi |^{p-1} \psi  & &\mathrm{on} \ \partial S^+_1=S_1
		\end{aligned}\right.
	\]
where $\beta := \frac{2s+\ell}{p-1} \left( N  - \frac{2sp+\ell}{p-1} \right)$, namely, 
for each  $\varphi \in H^1( S^+_1,\sigma_{N+1}^{1-2s}dS) \cap L^\infty(S^+_1) $,
	\begin{equation}\label{eq:4.10}
		\int_{ S^+_1} \sigma_{N+1}^{1-2s} \nabla_{S^+_1} \psi \cdot \nabla_{S^+_1} \varphi 
		+ \beta \sigma_{N+1}^{1-2s} \psi \varphi \, d S
		= \kappa_s \int_{ \partial S^+_1} |\psi|^{p-1} \psi  \varphi
		\, d \omega.
	\end{equation}
Furthermore, we may also choose $\varphi = \psi$ in \eqref{eq:4.10} and obtain 
	\begin{equation}
	\label{eq:4.11}
	\int_{S^+_1} \sigma_{N+1}^{1-2s}|\nabla_{S^+_1}\psi|^2
	+\beta\sigma_{N+1}^{1-2s}\psi^2  \,dS
	=\kappa_s\int_{ \partial S^+_1 }|\psi|^{p+1}\,d\omega.
	\end{equation}
}

\begin{proof}
For \eqref{eq:4.10}, 
by \eqref{eq:4.9}, $\psi(\omega,0) \in L^{p+1}(\partial S^+_1)$ and the dominated convergence theorem, 
it is enough to prove it for $\varphi \in C^1( \overline{S^+_1} )$.

For $V(X) = V(r \sigma) \in C^1_c( \overline{ \HS } )$, notice that 
	\begin{equation}\label{eq:4.12}
	\begin{aligned}
		\nabla V(X)& = \partial_r V( r \sigma ) \sigma + r^{-1} \nabla_{S^+_1} V, 
		\\
		\nabla W(X)& = r^{ - \frac{2s+\ell}{p-1} - 1 } 
		\left[ - \frac{2s+\ell}{p-1} \psi(\sigma) \sigma + \nabla_{S^+_1} \psi(\sigma) \right].
		\end{aligned}
	\end{equation}
We see from \eqref{eq:4.7}, \eqref{eq:4.12} and $\sigma \cdot \nabla_{S^+_1} h (\sigma) = 0$ 
for functions $h$ on $S^+_1$ that 
	\begin{equation}\label{eq:4.13}
		\begin{aligned}
			&\kappa_s \int_{  \RN} |x|^{ - \frac{2sp + \ell}{p-1} } |\psi(x/|x|,0)|^{p-1} \psi(x/|x|,0) V(x,0) \, dx 
			\\
			= \ &
			\int_{  \HS} t^{1-2s} \nabla W \cdot \nabla V \, dX 
			\\
			= \ & 
			\int_{  \HS} t^{1-2s} r^{ - \frac{2s+\ell}{p-1} - 1 } 
			\left[ - \frac{2s+\ell}{p-1}  \partial_rV (r \sigma) \psi(\sigma ) 
			+ r^{-1} \nabla_{S^+_1} V \cdot \nabla_{S^+_1} \psi 
			\right] \, dX.
		\end{aligned}
	\end{equation}
If we choose $V$ as $V(X) = \eta(r) \varphi(\sigma)$ where $\eta \in C^1_{c} ([0,\infty))$ 
and $\varphi \in C^1 ( \overline{ S^+_1} )$, then \eqref{eq:4.13} is rewritten as 
	\begin{equation}\label{eq:4.14}
		\begin{aligned}
			&\kappa_s \left(  \int_0^\infty r^{ - \frac{2sp+\ell}{p-1} + N - 1 } \eta (r) \,dr \right)
			\left( \int_{ \partial S^+_1} |\psi( \omega , 0 )|^{p-1} \psi (\omega, 0) \varphi (\omega,0) \, d \omega
			\right)
			\\
			= \ & - \frac{2s+\ell}{p-1} \left( \int_0^\infty r^{ N - \frac{2sp+\ell}{p-1}} 
			\eta'(r) \, d r \right) 
			\left( \int_{ S^+_1} \sigma_{N+1}^{1-2s} \varphi(\sigma) \psi(\sigma) \, d S \right)
			\\
			& \quad + \left( \int_0^\infty r^{ - \frac{2sp+\ell}{p-1} + N -1 } \eta(r) \, dr \right)
			\left( \int_{ S^+_1} \sigma_{N+1}^{1-2s} \nabla_{S^+_1} \varphi \cdot \nabla_{S^+_1} \psi \, d S \right). 
		\end{aligned}
	\end{equation}
Since $p_S(N,\ell) < p$ yields $ - \frac{2sp+\ell}{p-1} + N > 0$, it follows from the integration by parts that 
	\[
		-\frac{2s+\ell}{p-1} \int_0^\infty r^{N-\frac{2sp+\ell}{p-1}} \eta'(r) \, dr
		= \beta \int_0^\infty r^{N-\frac{2sp+\ell}{p-1}-1} \eta(r) \, dr.
	\]
Thus, by choosing $\eta \geq 0$ with $\eta \not\equiv 0$, \eqref{eq:4.14} implies 
	\[
		\kappa_s \int_{ \partial S^+_1} |\psi(\omega,0)|^{p-1} \psi(\omega,0) \varphi (\omega,0) \, d\omega 
		= \int_{ S^+_1}\sigma_{N+1}^{1-2s} \nabla_{S^+_1} \psi \cdot \nabla_{S^+_1} \varphi 
		+ \beta \sigma_{N+1}^{1-2s} \psi \varphi \, d S
	\]
for every $\varphi \in C^1( \overline{ S^+_1} )$. Hence, \eqref{eq:4.10} holds.

	For \eqref{eq:4.11}, take $(\psi_k)_k$ satisfying \eqref{eq:4.8}. 
Then \eqref{eq:4.10} holds for $\varphi = \psi_k$. 
Thanks to $\psi(\omega,0) \in L^{p+1} (\partial S_1^+)$ and the dominated convergence theorem, 
letting $k \to \infty$, we obtain \eqref{eq:4.11}. 
\end{proof}

\medskip

\noindent
\textbf{Step 2:} 
\textsl{For every $\varphi \in H^1( S^+_1,\sigma_{N+1}^{1-2s}dS) \cap L^\infty(S^+_1)$, 
	\begin{equation}
	\label{eq:4.15}
	\kappa_sp\int_{ \partial S^+_1 }|\psi|^{p-1}\varphi^2\,d \omega
	\le \int_{ S^+_1 } 
	\sigma_{N+1}^{1-2s} |\nabla_{S^+_1}\varphi|^2\,dS 
	+\left(\frac{N-2s}{2}\right)^2\int_{ S^+_1} \sigma_{N+1}^{1-2s}\varphi^2\,dS.
	\end{equation}
}

\begin{proof}
It is enough to treat the case $\varphi \in C^1( \overline{ S^+_1} )$ due to \eqref{eq:4.9} as in Step 1. 
We recall the stability in \eqref{eq:4.7}: 
for any $\phi\in C^1_c(\overline{\mathbb R^{N+1}_+}\setminus \overline{B^+_{R_0}})$, 
\begin{equation}
\label{eq:4.16}
\kappa_sp\int_{\partial\mathbb R^{N+1}_+}|x|^\ell|W|^{p-1} \phi(x,0)^2\,dx
\le \int_{\mathbb R^{N+1}_+}t^{1-2s}|\nabla \phi|^2\,dX.
\end{equation}
For $0< \varepsilon \ll 1$, we choose $\tau_\e$ and a standard cutoff function 
$\eta_\varepsilon\in C^1_c( (0,\infty) )$ such that
\begin{equation}
\label{eq:4.17}
\tau_\e := \frac{1}{\sqrt{-\log \e}} \to 0, \qquad 
\begin{aligned}
&\chi_{(R_0+2 \tau_\e , \, R_0+ \e^{-1} )}(r)\le \eta_\varepsilon(r) 
\le \chi_{(R_0 + \tau_\e , \, R_0+2 \e^{-1} )} (r), 
\\
&
|\eta_\varepsilon'(r)|\le C \tau_\e^{-1}
\quad \mbox{for}\quad r\in(R_0+ \tau_\e , \, R_0+ 2 \tau_\e),
\\
&
|\eta_\varepsilon'(r)|\le C\varepsilon
\quad \mbox{for}\quad r\in(R_0+ \e^{-1} , \, R_0+2 \e^{-1})
\end{aligned}
\end{equation}
where $\chi_A(r)$ is the characteristic function of $A \subset (0,\infty)$.
For $\varphi\in C^1( \overline{S^+_1} )$, we put
\begin{equation}
\label{eq:4.18}
\phi(X)=r^{-\frac{N-2s}{2}}\eta_\epsilon(r)\varphi(\sigma )\qquad\mbox{for}\quad 
X = r \sigma \in\mathbb R^{N+1}_+.
\end{equation}
Since $W=r^{ - \frac{2s+\ell}{p-1} } \psi (\sigma)$, we have 
\begin{equation}
\label{eq:4.19}
\int_{\partial\mathbb R^{N+1}_+}|x|^\ell|W|^{p-1}\phi^2\,dx
=\left(\int_0^\infty r^{-1}\eta_\varepsilon^2\,dr\right)
\left(\int_{ \partial S^+_1 }|\psi|^{p-1}\varphi^2\,d\omega\right).
\end{equation}

	On the other hand, by \eqref{eq:4.18}, we see that
\begin{equation}
\label{eq:4.20}
\begin{aligned}
|\nabla\phi (X)|^2
&
= \left( \left(r^{-\frac{N-2s}{2}}\eta_\varepsilon \right)' \right)^2\varphi^2
+r^{-2}\left(r^{-\frac{N-2s}{2}}\eta_\varepsilon\right)^2 |\nabla_{S^+_1}\varphi|^2
\\
&
=\left[ \left(\frac{N-2s}{2} \right)^2\varphi^2+ | \nabla_{S^+_1}\varphi |^2\right] 
r^{-2-(N-2s)}\eta_\varepsilon^2
\\
&\qquad
+r^{-(N-2s)}(\eta_\varepsilon')^2\varphi^2-(N-2s)r^{-1-(N-2s)}\eta_\varepsilon\eta_\varepsilon'\varphi^2.
\end{aligned}
\end{equation}
Since it follows from \eqref{eq:4.17} that
\begin{equation*}
	\begin{aligned}
		&\int_0^\infty r^{N+1-2s}r^{-(N-2s)}(\eta_\varepsilon')^2\,dr
		\leq 
		C \tau_\e^{-2} \int_{R_0+\tau_\e}^{R_0+2\tau_\e} r \, dr 
		+C\varepsilon^2\int_{R_0 + 1/\e}^{R_0 + 2/\varepsilon} r\,dr
		\leq C \left( \tau_\e^{-1} + 1 \right),
		\\
		&
		\left| \int_0^\infty r^{N+1-2s}r^{-1-(N-2s)}\eta_\varepsilon\eta_\varepsilon'\,dr \right|
		\leq C \tau_\e^{-1} \int_{R_0 + \tau_\e }^{R_0 + 2 \tau_\e } \,dr
		+C\varepsilon\int_{R_0 + 1/\e}^{R_0 + 2 / \e} \,dr
		\le C,
	\end{aligned}
\end{equation*}
by \eqref{eq:4.20} we have
\begin{equation}
\label{eq:4.21}
\begin{aligned}
&
\int_{\mathbb R^{N+1}_+}t^{1-2s}|\nabla \phi|^2\,dX
\\
= \ & \int_0^\infty\, dr\, \int_{ S^+_1 } r^N ( r \sigma_{N+1})^{1-2s}
\left\{ \left[ \left(\frac{N-2s}{2} \right)^2\varphi^2 
+|\nabla_{S^+_1}\varphi|^2\right]r^{-2-(N-2s)}\eta_\varepsilon^2 \right. 
\\
&\hspace{4cm} 
+ r^{-(N-2s)}(\eta_\varepsilon')^2\varphi^2-(N-2s)r^{-1-(N-2s)}\eta_\varepsilon\eta_\varepsilon'\varphi^2\Bigg\}
\,dS
\\
\le \ & \left(\int_0^\infty r^{-1}\eta_\varepsilon^2\,dr\right)
\left(\int_{S^+_1} \sigma_{N+1}^{1-2s}
\left[ \left( \frac{N-2s}{2} \right)^2\varphi^2+|\nabla_{S^+_1}\varphi|^2\right]\,dS\right)
+C \left(  \tau_\e^{-1} + 1 \right).
\end{aligned}
\end{equation}
Finally, remark that 
	\[
		\int_0^\infty r^{-1} \eta_\e^2 \, dr 
		\geq \int_{R_0 + 2 \tau_\e}^{R_0+\e^{-1}} r^{-1} \, dr 
		= \log \left( R_0 + \e^{-1} \right) - \log \left( R_0 + 2 \tau_\e \right) 
		\geq \frac{\tau_\e^{-2}}{2} 
	\]
holds for sufficiently small $\e$. Therefore, 
substituting \eqref{eq:4.19} and \eqref{eq:4.21} to \eqref{eq:4.16}, 
dividing by $\int_0^\infty r^{-1} \eta_\e^2 \, dr $ and 
taking $\varepsilon\to0$,
we obtain \eqref{eq:4.15}. 
\end{proof}

\medskip

\noindent
\textbf{Step 3:} 
\textsl{For $\alpha\in [0 , \frac{N-2s}{2} )$, set 
	\[
		v_\alpha(x) := |x|^{ - \left( \frac{N-2s}{2} - \alpha \right) }, \quad 
		V_\alpha (X) := ( P_s (\cdot, t) \ast v_\alpha ) (x).
	\]
Then for each $X \in \HS$ and $\lambda > 0$, 
\begin{equation}
\label{eq:4.22}
V_\alpha(\lambda X)=\lambda^{-\frac{N-2s}{2}+\alpha}V_\alpha(X)
\end{equation}
and $\phi_{\alpha}(\sigma) := V_\alpha ( \sigma) = V_\alpha (r^{-1} X) 
\in C( \overline{ S^+_1 } ) \cap C^1( S^+_1  ) \cap H^1( S^+_1,\sigma_{N+1}^{1-2s}dS) $. 
Moreover, $\phi_{\alpha}$ also satisfies 
$\phi_{\alpha} > 0$ in $\overline{S^+_1}$, 
for any $\varphi \in H^1(S^+_1,\sigma_{N+1}^{1-2s}dS) \cap L^\infty(S^+_1)$, 
\begin{equation}
\label{eq:4.23}
	\begin{aligned}
		&
		\int_{S_1^+} \sigma_{N+1}^{1-2s} |\nabla_{S^+_1}\varphi  |^2\,dS 
		+ \left[ \left(\frac{N-2s}{2}\right)^2-\alpha^2\right] 
		\int_{S^+_1} \sigma_{N+1}^{1-2s}\varphi^2\,dS
		\\
		= \ &
		 \kappa_s \lambda(\alpha) \int_{\partial S^+_1} \varphi^2\, d\omega
		+\int_{S^+_1} \sigma_{N+1}^{1-2s}\phi_\alpha^2
		\left|\nabla_{S^+_1} \left( \frac{\varphi}{\phi_\alpha} \right) \right|^2\,dS
	\end{aligned}
\end{equation}
and 
	\begin{equation}
		\label{eq:4.24}
		0 \leq \alpha_1 \leq \alpha_2 < \frac{N-2s}{2} \quad \Rightarrow\quad 
		\phi_{\alpha_1} \le \phi_{\alpha_2} \qquad \mathrm{in} \quad S^+_1.
	\end{equation}
}

	\begin{proof}
By direct computation, we may check \eqref{eq:4.22}. 
For the assertion 
$\phi_{\alpha} \in C( \overline{ S^+_1 } ) \cap C^1( S^+_1  ) \cap H^1( S^+_1,\sigma_{N+1}^{1-2s}dS)$, 
we remark $V_\alpha(X)
 = (P_s(\cdot,t) \ast v_\alpha )(x) 
\in C^\infty(\HS)$.
By $\phi_\alpha=V_\alpha|_{ S^+_1}$,
to show  
$\phi_{\alpha} \in C( \overline{ S^+_1 } ) \cap C^1( S^+_1  ) \cap H^1( S^+_1,\sigma_{N+1}^{1-2s}dS)$, 
it suffices to prove 
	\begin{equation}\label{eq:4.25}
		V_\alpha, \  t^{1-2s} \partial_tV_\alpha, \ \nabla_x V_\alpha \in C 
		\left( \overline{ N^+_{1/4} (\partial S^+_1)} \right), \quad 
		N^+_r(A) := \Set{ X \in \HS | \, \dist \left( X, A \right) < r  }.
	\end{equation}
To this end, decompose $v_\alpha = v_{\alpha,1} + v_{\alpha,2}$ where 
$ \supp v_{\alpha,1} \subset B_{2} \setminus B_{1/4}$ and $v_{\alpha,1} \equiv v_\alpha$ 
on $B_{3/2} \setminus B_{1/2}$, and set $V_{\alpha,i} (X) := (P_s(\cdot, t) \ast v_{\alpha,i})(x)$. 
Since $v_{\alpha,1} \in C^\infty_c(\RN)$ and 
$N_{1/4} (\partial S^+_1 ) \cap \supp v_{\alpha,2} = \emptyset$ where 
$N_r(A) := \set{x \in \RN |\, \dist (x,A) < r }$, 
it is not difficult to show \eqref{eq:4.25} and 
$\phi_{\alpha} \in C( \overline{ S^+_1 } ) \cap C^1( S^+_1  ) \cap H^1( S^+_1,\sigma_{N+1}^{1-2s}dS)$.

	The assertion $\phi_{\alpha} > 0$ in $\overline{ S^+_1}$ follows from 
$v_\alpha > 0$ in $\RN \setminus \{0\}$ and the definition of $V_\alpha$.

	For \eqref{eq:4.23} and \eqref{eq:4.24}, remark that 
in \cite[Lemma~4.1]{Fall}, it is proved that $(-\Delta)^s v_\alpha = \lambda(\alpha) |x|^{-2s} v_\alpha$
in $\RN$,
where $\lambda(\alpha)$ appears in \eqref{eq:1.7}. 
Hence, by the property of $P_s(x,t)$ and $v_\alpha$, we may check that 
for each $\varphi \in C^\infty_c(\RN \setminus \set{0} )$, 
	\begin{equation}\label{eq:4.26}
		- \lim_{t \to +0} \int_{  \RN} t^{1-2s} \partial_t V_\alpha (X) \varphi (x) \, d x 
		= \kappa_s \int_{  \RN} v_\alpha (-\Delta)^s \varphi \, d x 
		= \kappa_s \int_{  \RN} \lambda (\alpha) |x|^{-2s} v_\alpha \varphi \, dx.
	\end{equation}
	For any fixed $\omega \in \partial S^+_1$, 
we consider a curve 
	\[
		\gamma_{\omega} (\tau ) := 
		\begin{pmatrix}
			\sqrt{1-\tau^{2}} \, \omega \\ \tau
		\end{pmatrix}
		 \in S^+_1.
	\]
Then $\phi_\alpha ( \gamma_{\omega} (\tau) ) = V_\alpha (\gamma_{\omega} (\tau))$ and 
	\[
		\frac{d}{d \tau} V_\alpha ( \gamma_{\omega} (\tau ) ) 
		= \nabla V_\alpha ( \gamma_{\omega} (\tau) ) \cdot 
		\begin{pmatrix}
			- \frac{\tau}{\sqrt{1-\tau^2}} \omega \\ 1
		\end{pmatrix}
		= - \frac{\tau}{\sqrt{1-\tau^2}} \nabla_x V_\alpha( \gamma_{\omega} (\tau) ) \cdot \omega 
		+ \partial_t V_\alpha( \gamma_{\omega} (\tau) ).
	\]
Combining this fact with \eqref{eq:4.25}, $v_\alpha(1) = 1$ and \eqref{eq:4.26}, we deduce that 
	\begin{equation}\label{eq:4.27}
		-\lim_{\sigma_{N+1}\to0} \sigma_{N+1}^{1-2s} \partial_{\sigma_{N+1}} \phi_\alpha (\sigma)
		=
		\kappa_s\lambda(\alpha)\qquad\mbox{on}\quad \partial S^+_1.
	\end{equation}
Due to \eqref{eq:4.22}, we notice that 
\[
V_\alpha(X)=r^{-\frac{N-2s}{2}+\alpha}\phi_\alpha(\sigma).
\]
Furthermore, since $0=- \diver ( t^{1-2s} \nabla V_\alpha)$ in $\HS$ and 
$V_\alpha=v_\alpha$ on $\partial\mathbb R^{N+1}_+\setminus\{0\}$,
we have $\phi_\alpha=v_\alpha=1$ on $\partial S^+_1$,
and by Lemma \ref{Lemma:4.4} with $f(r) = r^{ - \frac{N-2s}{2} + \alpha }$ and 
$\psi (\sigma) = \phi_{\alpha}(\sigma)$, $\phi_{\alpha}$ is a solution of 
\begin{equation}
\label{eq:4.28}
\left\{\begin{aligned}
 - \diver_{S^+_1} \left( \sigma_{N+1}^{1-2s} \nabla_{S^+_1}  \phi_\alpha \right) 
+ \left[ \left( \frac{N-2s}{2}\right)^2-\alpha^2\right] \sigma_{N+1}^{1-2s}\phi_\alpha 
&= 0
& &\mbox{in}\quad S^+_1,
\\
\phi_\alpha&=1
& &
\mbox{on}\quad \partial S^+_1.
\end{aligned}
\right.
\end{equation}

	Now we  prove \eqref{eq:4.23}. 
Since $\phi_{\alpha} \in C( \overline{ S^+_1} )$ and $\phi_{\alpha} > 0$ in $ \overline{ S^+_1}$, 
for every $\varphi \in H^1(S^+_1,\sigma_{N+1}^{1-2s}dS) \cap L^\infty (S^+_1)$ and 
$(\varphi_k)_k$ with \eqref{eq:4.9}, it follows that 
	\[
		\frac{\varphi_k}{\phi_{\alpha}} \to \frac{\varphi}{\phi_{\alpha}} \quad 
		\text{strongly in} \ H^1(S^+_1,\sigma_{N+1}^{1-2s}dS), \quad 
		\frac{\varphi}{\phi_{\alpha}} \in H^1(S^+_1,\sigma_{N+1}^{1-2s}dS). 
	\]
Hence, it suffices to show \eqref{eq:4.23} for $\varphi \in C^1( \overline{ S^+_1 } )$. 
For $\varphi\in C^1( \overline{S^+_1} )$, notice that
	\[
		\begin{aligned}
		\nabla_{S^+_1} \phi_\alpha \cdot \nabla_{S^+_1} 
		\left( \frac{\varphi^2}{\phi_\alpha} \right)
		&= \nabla_{S^+_1} \phi_{\alpha} \cdot 
		\left[ \frac{2 \varphi \nabla_{S^+_1} \varphi}{\phi_{\alpha}} 
		- \frac{\varphi^2 \nabla_{S^+_1} \phi_\alpha}{\phi_{\alpha}^2} \right]
		= |\nabla_{S^+_1} \varphi |^2 
		- \left|\nabla_{S^+_1} \left(\frac{\varphi}{\phi_\alpha} \right) \right|^2 
		\phi_\alpha^2.
		\end{aligned}
	\]
Thus, multiplying \eqref{eq:4.28} by $\varphi^2/\phi_\alpha$, we see from \eqref{eq:4.27} that 
\eqref{eq:4.23} holds.

	Finally, for $0 \leq \alpha_1 \leq \alpha_2 < \frac{N-2s}{2}$, we infer from 
$0< \phi_{\alpha}$ that 
	\[
		\begin{aligned}
			- {\rm{div}}_{S^+_1} \left( \sigma_{N+1}^{1-2s} \nabla_{S^+_1} \phi_{\alpha_1} \right) 
			&= 
			- \left[ \left( \frac{N-2}{2} \right)^2  - \alpha_1^2 \right] \sigma_{N+1}^{1-2s}  \phi_{\alpha_1}
			\\
			&\leq - \left[ \left(\frac{N-2s}{2}\right)^2 - \alpha_2^2 \right] \sigma_{N+1}^{1-2s}\phi_{\alpha_1}
			\quad\mbox{on}\quad S^+_1,
		\end{aligned}
	\]
which yields 
	\[
		- \diver_{S^+_1} 
		\left( \sigma_{N+1}^{1-2s} \nabla_{ S^+_1 } \left( \phi_{\alpha_2} - \phi_{\alpha_1} \right) \right) 
		+ \left[ \left(\frac{N-2s}{2}\right)^2 - \alpha_2^2 \right] \sigma_{N+1}^{1-2s} 
		\left( \phi_{\alpha_2} - \phi_{\alpha_1} \right) \geq 0.
	\]
Multiplying this inequality by $( \phi_{\alpha_2} - \phi_{\alpha_1} )_- :=\max\{0,-(\phi_{\alpha_2} - \phi_{\alpha_1})\}\in H^1( S^+_1,\sigma_{N+1}^{1-2s}dS)$ 
and integrating it over $S^+_1$, by $\phi_{\alpha_1} = 1 = \phi_{\alpha_2}$ on $\partial S^+_1$ 
and $\left( \frac{N-2s}{2} \right)^2 - \alpha^2_2 > 0$, we deduce that 
$ ( \phi_{\alpha_2} - \phi_{\alpha_1} )_- \equiv 0$, hence, \eqref{eq:4.24} holds.
\end{proof}

\medskip

\noindent
\textbf{Step 4: } \textsl{Conclusion}

\medskip 

Now we are ready to prove the assertion of Lemma \ref{Lemma:4.3}. 
By $p_S(N,\ell)<p$ and $\ell>-2s$ thanks to \eqref{eq:1.2}, 
we set 
	\begin{equation*}
		\widetilde{\alpha}=\frac{N-2s}{2}-\frac{2s+\ell}{p-1} \in \left( 0 , \frac{N-2s}{2} \right).
	\end{equation*}
By this choice of $\widetilde{\alpha}$, we see that 
\begin{equation}
\label{eq:4.29}
\left(\frac{N-2s}{2}\right)^2-\widetilde{\alpha}^2=\frac{2s+\ell}{p-1}\left(N-2s-\frac{2s+\ell}{p-1}\right)=\beta
\end{equation}
where $\beta$ appears in Step 1. 
Let $(\psi_k)_k$ be the functions in \eqref{eq:4.8} and notice that 
$\phi_{0} / \phi_{\widetilde{\alpha}} \in C( \overline{ S^1_+ } ) \cap H^1(S^+_1,\sigma_{N+1}^{1-2s}dS)$, 
$\phi_0=\phi_{\widetilde{\alpha}}=1$ on $\partial S^+_1$ and 
$\psi_k \phi_0 / \phi_{\widetilde{\alpha}} \in H^1( S^+_1,\sigma_{N+1}^{1-2s}dS) \cap L^\infty(S^+_1) $. 
Hence, \eqref{eq:4.15} and \eqref{eq:4.23} with $\varphi = \psi_k \phi_0 / \phi_{\widetilde{\alpha}}$ 
and $\alpha = 0$ give 
	\begin{equation*}
		\begin{aligned}
			&
			\kappa_s p\int_{\partial S^+_1} |\psi|^{p-1} \psi_k^2 \,d \omega
			\\
			\leq \ & 
			\int_{S^+_1} 
			\sigma_{N+1}^{1-2s} 
			\left| \nabla_{S^+_1} \left( \frac{\psi_k \phi_0}{\phi_{\widetilde{\alpha}}} \right) \right|^2 
			\, dS 
			+ \left( \frac{N-2s}{2} \right)^2 
			\int_{S^+_1} \sigma_{N+1}^{1-2s} \left( \frac{\psi_k \phi_0}{\phi_{\widetilde{\alpha}}} \right)^2\,dS 
			\\
			= \ & \kappa_s \lambda(0) \int_{ \partial S^+_1} \psi_k^2 \, d \omega 
			+ \int_{ S^+_1} \sigma_{N+1}^{1-2s} \phi_{0}^2 
			\left| \nabla_{ S^+_1 } \left( \frac{\psi_k}{\phi_{\widetilde{\alpha}}} \right) \right|^2 \,dS
\end{aligned}
\end{equation*}
This together with \eqref{eq:4.24} implies
\begin{equation}
\label{eq:4.30}
\kappa_s p \int_{\partial S^+_1} |\psi|^{p-1} \psi_k^2 \,d \omega 
\le \kappa_s \lambda(0) \int_{\partial S^+_1} \psi^2_k\,d \omega
+\int_{ S^+_1 } \sigma_{N+1}^{1-2s} \phi_{\widetilde{\alpha}}^2 
\left|\nabla_{ S^+_1} \left(\frac{\psi_k }{\phi_{\widetilde{\alpha}}} \right) \right|^2\,dS.
\end{equation}
Substituting \eqref{eq:4.23} with $\varphi = \psi_k$ and $\alpha = \widetilde{\alpha}$ 
into \eqref{eq:4.30}, we observe from \eqref{eq:4.29} that 
\begin{equation}\label{eq:4.31}
\begin{aligned}
&
\kappa_sp\int_{\partial S^+_1} |\psi|^{p-1} \psi_k^2 \,d \omega
\\
\le \ & 
\kappa_s \lambda(0) \int_{\partial S^+_1} \psi^2_k \,d \omega  
+  \int_{ S^+_1 } \sigma_{N+1}^{1-2s}|\nabla_{ S^+_1 }\psi_k|^2\,dS
 \\
 &\hspace{1.5cm}
 +\left[ \left( \frac{N-2s}{2}\right)^2-\widetilde{\alpha}^2\right] 
 \int_{S^+_1} \sigma_{N+1}^{1-2s}\psi^2_k \,dS
 -\kappa_s\lambda( \widetilde{\alpha} )\int_{\partial S^+_1}\psi^2_k \,d \omega
\\
= \ & \kappa_s \left( \lambda(0) - \lambda(\widetilde{\alpha}) \right) 
\int_{\partial S^+_1} \psi^2_k \,d \omega 
+ \int_{S^+_1} \sigma_{N+1}^{1-2s}|\nabla_{S^+_1}\psi_k |^2\,dS
+\beta\int_{ S^+_1 } \sigma_{N+1}^{1-2s}\psi_k^2\,dS.
\end{aligned}
\end{equation}

	On the other hand, by \eqref{eq:4.23} with $\varphi = \psi_k$ and $\alpha = \widetilde{\alpha}$, 
we have 
	\[
		\begin{aligned}
			\int_{ S^+_1} \sigma_{N+1}^{1-2s} | \nabla_{ S^+_1 } \psi_k |^2 \, d S 
			+ \beta \int_{S^+_1} \sigma_{N+1}^{1-2s} \psi_k^2 \, dS 
			\geq 
			\kappa_s \lambda(\widetilde{\alpha}) 
			\int_{ \partial S^+_1} \psi_k^2 \, d \omega.
		\end{aligned}
	\]
From \eqref{eq:4.31} and the fact $\lambda (0) > \lambda(\widetilde{\alpha})$ due to \eqref{eq:1.8}, 
it follows that 
	\begin{equation*}
		\kappa_s p \int_{ \partial S^+_1} | \psi |^{p-1} \psi_k^2 \, d \omega 
		\leq \frac{\lambda (0)}{\lambda(\widetilde{\alpha})} 
		\left\{ \int_{ S^+_1} \sigma_{N+1}^{1-2s} | \nabla_{ S^+_1 } \psi_k |^2 \, d S 
		+ \beta \int_{S^+_1} \sigma_{N+1}^{1-2s} \psi_k^2 \, dS\right\}.
	\end{equation*}
Letting $k \to \infty$ and noting \eqref{eq:4.8} and \eqref{eq:4.11}, we obtain 
	\[
		\kappa_s p \int_{ \partial S^+_1} | \psi |^{p+1} \, d \omega 
				\leq \frac{\lambda (0)}{\lambda(\widetilde{\alpha})} 
			\kappa_s \int_{ \partial S^+_1} |\psi|^{p+1} \, d \omega. 
	\]
Thus, we obtain $\lambda(\widetilde{\alpha})p\le \lambda(0)$
unless $\psi\equiv 0$.
Therefore, if \eqref{eq:1.11} holds, namely
$\lambda(\widetilde{\alpha})p> \lambda(0)$,
then $\psi\equiv0$, and by $W=r^{ -\frac{2s+\ell}{p-1} } \psi$, we have $W\equiv 0$.
Hence, Lemma~\ref{Lemma:4.3} follows. 
	\end{proof}

Now we are ready to prove Theorem~\ref{Theorem:1.1} for $p_S(N,\ell)<p$.
Following \cite{DDW}, we use the blow-down analysis.

	\begin{proof}[Proof of Theorem~\ref{Theorem:1.1} for $p_S(N,\ell)<p$] 
Assume \eqref{eq:1.2} and \eqref{eq:1.11}.
Let $u \in \Hsloc (\RN) \cap L^\infty_{\rm loc} (\RN) \cap L^2(\RN,(1+|x|)^{-N-2s}dx) $ 
be a solution  of \eqref{eq:1.1} which is stable outside $B_{R_0}$ and 
let $U$ be the function given in \eqref{eq:2.3}. 
Recall $D$, $H$ and $E$ in \eqref{eq:4.1}, \eqref{eq:4.2} and \eqref{eq:4.3}, respectively.
Then, by Lemma~\ref{Lemma:2.9} we see that
\begin{equation}
\label{eq:4.32}
\lambda^{\frac{2(2s+\ell)}{p-1}}D(U;\lambda)
\le C\lambda^{\frac{2}{p-1}(s(p+1)+\ell)-N}\left(\int_{B^+_\lambda}t^{1-2s}|\nabla U|^2\, dX
+\int_{B_\lambda}|x|^\ell|u|^{p+1}\,dx\right)
\le C
\end{equation}
for $\lambda\ge 3 R_0$.
Since $E$ is nondecreasing with respect to $\lambda$ due to Lemma \ref{Lemma:4.2}, 
by \eqref{eq:4.32} and Lemma \ref{Lemma:2.8}, for $\lambda \geq 3R_0$, we have 
\begin{equation*}
\begin{aligned}
E(U;\lambda)
&
\le \lambda^{-1} \int_\lambda^{2\lambda}E(U;\xi)\,d\xi
\\
&
= \lambda^{-1} \int_\lambda^{2\lambda} 
\xi^{\frac{2(2s+\ell)}{p-1}} \left[D(U;\xi)+\frac{2s+\ell}{2(p-1)}H(U;\xi) \right]\,d\xi
\\
&\leq C + C \lambda^{-1} \int_{ \lambda}^{ 2\lambda} d \xi \, \xi^{ \frac{2(2s+\ell)}{p-1} - N - 1 + 2s } 
\int_{ S_\xi^+} t^{1-2s} U^2 \, dS
\\
&
\le C+C\lambda^{ \frac{2}{p-1}(2s+\ell) + 2s -N-2 }\int_{\lambda}^{2\lambda} d\xi 
\int_{S^+_\xi}t^{1-2s}U^2\,dS 
\\
&
\le C+C\lambda^{ \frac{2}{p-1}(2s+\ell) + 2s -N-2 }\int_{B_{2\lambda}^+ }t^{1-2s}U^2\,dX
\\
&
\le C+C\lambda^{ \frac{2}{p-1}(2s+\ell) + 2s -N-2 } 
\lambda^{N + 2 (1-s) - \frac{2(2s+\ell)}{p-1}  } 
\le C.
\end{aligned}
\end{equation*}
This implies that
\begin{equation}
\label{eq:4.33}
\lim_{\lambda\to\infty}E(U;\lambda)<+\infty.
\end{equation}

On the other hand, 
for $X\in\mathbb R^{N+1}_+$, let
\[
V_\lambda (X):=\lambda^{\frac{2s+\ell}{p-1}}U(\lambda X).
\]
Then it is easy to check that 
	\begin{equation}\label{eq:4.34}
		\begin{aligned}
			&V_\lambda(X) = 
			\left( P_s(\cdot, t) \ast \left( \lambda^{ \frac{2s+\ell}{p-1} } u \left( \lambda \cdot \right) \right) \right) (x), 
	 		\\
			&
	 		- \lim_{t \to +0} t^{1-2s} \partial_t V_\lambda(x,t) = \kappa_s |x|^\ell \left| V_\lambda(x,0) \right|^{p-1} 
			V_\lambda (x,0),
			\\
			&\lambda^{\frac{2(2s+\ell)}{p-1}}D(U;\lambda R)=D(V_\lambda ;R), 
			\,\,\, \lambda^{ \frac{2(2s+\ell)}{p-1} } H(U;\lambda R) = H(V_\lambda; R), \,\,\,
			E(U;\lambda R)=E(V_\lambda;R)
		\end{aligned}
	\end{equation}
for each $\lambda \geq 3R_0$. 
Since $u$ is stable outside $B_{R_0}$, 
as in the proof of Lemma \ref{Lemma:2.9} (see \eqref{eq:2.65}), by \eqref{eq:2.8}, 
$U$ is stable outside $B_{R_0}^+$. 
Therefore, for every $\psi \in C^1_c( \overline{ \HS} \setminus \overline{ B^+_{\lambda^{-1} R_0 } } )$, 
	\begin{equation}\label{eq:4.35}
		\begin{aligned}
			p \kappa_s \int_{  \RN} |x|^\ell | V_\lambda(x,0) |^{p-1} \psi(x,0)^2 \, dx 
			&= p \kappa_s  \lambda^{2s-N} \int_{  \RN} |x|^\ell |u(x)|^{p-1} \psi( \lambda^{-1} x,0 )^2 \, dx
			\\
			&\leq \lambda^{2s-N} \int_{  \HS} t^{1-2s} \left| \nabla ( \psi(\lambda^{-1} X) ) \right|^2 \, dX 
			\\
			&= \int_{  \HS}  t^{1-2s} \left| \nabla \psi \right|^2 \, dX,
		\end{aligned}
	\end{equation}
which implies that $V_\lambda$ is stable outside $B^+_{\lambda^{-1} R_0 }$. 
Furthermore, by \eqref{eq:2.52} and \eqref{eq:2.60}, 
$(V_\lambda)_{\lambda \geq 3 R_0}$ is bounded in $\Hloc(\overline{ \HS},t^{1-2s}dX)$ and 
$(V_\lambda(x, 0))_{\lambda \geq 3R_0}$ is bounded in $L^{p+1}_{\rm loc} (\RN,|x|^\ell dx)$.

	Now let $(\lambda_i)_{i=1}^\infty$ satisfy $\lambda_i \to \infty$ and 
$V_{\lambda_i} \rightharpoonup U_\infty$ weakly in $\Hloc ( \overline{\HS},t^{1-2s}dX)$. 
Thanks to the above fact, without loss of generality, we may also assume that 
	\begin{equation}\label{eq:4.36}
		U_\infty(x,0) \in L^{p+1}_{\rm loc} ( \RN,|x|^\ell dx).
	\end{equation}
We shall claim that 
		\begin{align}
			&V_{\lambda_i} (x,0) \to U_\infty(x,0) & &\text{strongly in } L^q_{\rm loc} (\RN) 
			\quad \text{for $1 \leq q < \frac{2N}{N-2s}$}, 
			\label{align:4.37}
			\\
			&V_{\lambda_i}(X) \to U_\infty(X) & &\text{strongly in } L^2_{\rm loc} ( \overline{ \HS},t^{1-2s}dX).
			\label{align:4.38}
		\end{align}
Due to the boundedness of the trace operator from $H^1( B_R \times (0,R),t^{1-2s}dX)$ 
to $H^s(B_R)$ for each $R$ (see \cite{DD-12}), 
we also have $V_{\lambda_i} (x,0) \rightharpoonup U_\infty (x,0)$ 
weakly in $H^s(B_R)$. 
By the compactness of embedding $H^s(B_R) \subset L^q(B_R)$ where $1 \leq q < 2N/(N-2s)$, 
we get \eqref{align:4.37}. 
For \eqref{align:4.38}, 
since 
$H^1( B_R \times (R^{-1},R), t^{1-2s}dX) = H^1( B_R \times (R^{-1},R) )$,
we first remark that
\begin{equation}
\label{eq:4.39}
V_{\lambda_i} \to U_\infty \qquad\mbox{strongly in $L^2_{\rm loc} (\HS,t^{1-2s}dX)$}.
\end{equation}
Around $t = 0$, we notice that for $\psi \in C^1( \overline{ \HS})$, 
	\[
		\begin{aligned}
			\left| \psi(x,t) \right| 
			&\leq 
			\left| \psi (x,0) \right| 
			+ \int_0^t \tau^{\frac{2s-1}{2}} \tau^{\frac{1-2s}{2}} \left| \partial_t \psi (x,\tau) \right| \, d \tau 
			\\
			&\leq |\psi(x,0)| + \left[ \frac{1}{2s} t^{2s} \right]^{1/2} 
			\left( \int_0^t \tau^{1-2s} \left| \partial_t \psi (x, \tau  ) \right|^2 \, d \tau \right)^{1/2}.
		\end{aligned}
	\]
Therefore, 
	\begin{equation}\label{eq:4.40}
		\begin{aligned}
			&
			\int_{B_R \times (0,T)} t^{1-2s} \left| \psi (X) \right|^2 \, dX 
			\\
			&\leq 2 \int_{B_R \times (0,T)} t^{1-2s} \left[ |\psi(x,0)|^2 
			+ \frac{t^{2s}}{s} \int_0^t \tau^{1-2s} |\partial_t \psi (x,\tau) |^2 \, d \tau   \right] dX
			\\
			& \leq C_s\left[ T^{2-2s} \| \psi(\cdot, 0) \|_{L^2(B_R)}^2 
			+T^2 \int_{B_R\times(0,T)} t^{1-2s} |\partial_t \psi (X)|^2 \, dX  \right].
		\end{aligned}
	\end{equation}
By the density argument, \eqref{eq:4.40} holds for every $W \in \Hloc (\overline{\HS},t^{1-2s}dX)$. 
Thus, by \eqref{eq:4.39},
	\[
		\begin{aligned}
			&
			\limsup_{i \to \infty} \int_{ B_R \times (0,R) } t^{1-2s} | V_{\lambda_i} - U_\infty |^2 \, dX
			\\ 
			= \ & \limsup_{i \to \infty} \left( \int_{B_R \times (0,T)} + \int_{B_R \times (T,R)} \right) 
			t^{1-2s} | V_{\lambda_i} - U_\infty |^2 \, dX 
			\leq C (T^{2-2s} + T^2 ).
		\end{aligned}
	\]
Since $T \in (0,R)$ is arbitrary and $C$ is independent of $T$, \eqref{align:4.38} holds.

	Next, we shall prove that $U_\infty$ satisfies \eqref{eq:4.7}. 
For the third property in \eqref{eq:4.7}, we observe from \eqref{eq:2.18} and \eqref{eq:4.34} that 
for each $\varphi \in C^1_c( \overline{ \HS} )$, 
	\[
		\int_{  \HS} t^{1-2s} \nabla V_{\lambda_i} \cdot \nabla \varphi \, dX 
		= \kappa_s \int_{  \RN} |x|^\ell |V_{\lambda_i} (x,0)|^{p-1} V_{\lambda_i} (x,0) \varphi (x,0) \, dx.
	\]
By \eqref{align:4.37}, we may also suppose that $V_{\lambda_i} (x,0) \to U_\infty(x,0)$ for a.a. $x \in \RN$. 
Since $(V_{\lambda_i}(x,0))_i$ is bounded in $L^{p+1}_{\rm loc} (\RN,|x|^\ell dx)$, 
a variant of Strauss' lemma (see Strauss \cite[Compactness Lemma 2]{Str-77} and 
Berestycki and Lions \cite[Theorem A.I]{BL}) 
and the fact $V_{\lambda_i} \rightharpoonup U_\infty$ weakly in $\Hloc( \overline{ \HS}, t^{1-2s}dX)$ 
give 
	\[
		\int_{  \HS} t^{1-2s} \nabla U_\infty \cdot \nabla \varphi \, dX 
		= \kappa_s \int_{  \RN} |x|^\ell |U_{\infty} (x,0)|^{p-1} U_{\infty} (x,0) \varphi (x,0) \, dx.
	\]
In a similar way, by \eqref{eq:4.35}, we also observe that $U_\infty$ is stable outside $B_\varepsilon^+$ 
for any $\varepsilon > 0$, that is, for each $\psi \in C^1_c( \overline{ \HS} \setminus \overline{ B^+_{\e}} )$, 
	\[
		\kappa_s p \int_{  \RN} |x|^\ell |U_\infty(x,0)|^{p-1} \psi(x,0)^2 \, dx 
		\leq \int_{  \HS} t^{1-2s} |\nabla \psi|^2 \, dX. 
	\]

Finally, we prove $U_\infty(X) = r^{- \frac{2s+\ell}{p-1}} U_\infty( r^{-1} X )$. 
If this is true, then \eqref{eq:4.36} gives $\psi(\omega, 0) = U_\infty(x/r,0) \in L^{p+1}(\partial S_1^+)$ and 
Lemma \ref{Lemma:4.3} is applicable for $U_\infty$. Remark that \eqref{eq:4.34} implies 
	\begin{equation}\label{eq:4.41}
		\left( - \Delta \right)^s V_\lambda ( x,0) = |x|^\ell \left| V_\lambda (x,0) \right|^{p-1} V_\lambda (x,0) 
		\quad \text{in} \ \RN
	\end{equation}
and Proposition \ref{Proposition:2.2} and Lemma \ref{Lemma:4.2} hold for $V_\lambda$. Hence, 
for $R_2>R_1>0$, by \eqref{eq:4.33}, \eqref{eq:4.34} and Lemma \ref{Lemma:4.2}, we have
\begin{equation}\label{eq:4.42}
\begin{aligned}
0
=\lim_{i\to\infty}\bigg\{E(U;\lambda_iR_2)-E(U;\lambda_iR_1)\bigg\}
&=\lim_{i\to\infty}\bigg\{E(V_{\lambda_i};R_2)-E(V_{\lambda_i};R_1)\bigg\}
\\
&
\ge\liminf_{i\to\infty}\int_{R_1}^{R_2}\frac{\partial}{\partial r}E(V_{\lambda_i};r)\,dr.
\end{aligned}
\end{equation}
This together with \eqref{eq:4.4}, \eqref{align:4.38} and the weak lower semicontinuity of norms yield 
\begin{equation}
\label{eq:4.43}
\begin{aligned}
0
&
\ge
\liminf_{i\to\infty}\int_{R_1}^{R_2}r^{\frac{2}{p-1}(s(p+1)+\ell)-N}
\left( \int_{S^+_r}t^{1-2s}\left(\frac{2s+\ell}{p-1}\frac{V_{\lambda_i}}{r} 
+\frac{\partial V_{\lambda_i}}{\partial r}\right)^2\, dS
\right)dr
\\
&
=
\liminf_{i\to\infty}\int_{B^+_{R_2}\setminus B^+_{R_1}}t^{1-2s}r^{\frac{2}{p-1}(s(p+1)+\ell)-N}
\left(\frac{2s+\ell}{p-1}\frac{V_{\lambda_i}}{r}+\frac{\partial V_{\lambda_i}}{\partial r}\right)^2\,dX
\\
&
\ge
\int_{B^+_{R_2}\setminus B^+_{R_1}}t^{1-2s}r^{\frac{2}{p-1}(s(p+1)+\ell)-N}
\left(\frac{2s+\ell}{p-1}\frac{U_\infty}{r}+\frac{\partial U_\infty}{\partial r}\right)^2\,dX.
\end{aligned}
\end{equation}
Noting that $U_\infty \in C^\infty(\HS)$ thanks to 
$\mathrm{div} (t^{1-2s} \nabla U_\infty) = 0$ and elliptic regularity, 
by the arbitrariness of $R_1$ and $R_2$, we have 
	\[
	0 = 
	\frac{\partial U_\infty}{\partial r}+\frac{2s+\ell}{p-1}\frac{U_\infty}{r} 
	= r^{ - \frac{2s+\ell}{p-1} } \frac{\partial}{\partial r}\left( r^{ \frac{2s+\ell}{p-1} } U_\infty \right)
	\quad \mbox{in}\quad \mathbb R^{N+1}_+.
	\]
Integrating this equality with respect to $r$, we obtain
	\[
		U_\infty(X)=r^{-\frac{2s+\ell}{p-1}}U_\infty(r^{-1}X)
	\]
and hence, $U_\infty$ satisfies \eqref{eq:4.7}.

	It follows from Lemma~\ref{Lemma:4.3} that $U_\infty\equiv0$. 
Since the weak limit does not depend on choices of subsequences, we infer that 
$V_\lambda \rightharpoonup 0$ weakly in $\Hloc  (  \overline{ \HS},t^{1-2s}dX)$ and 
from \eqref{align:4.38} that 
	\begin{equation}\label{eq:4.44}
		\int_{ B_{2R}^+ } t^{1-2s} |V_\lambda|^2 \, dX \to 0 \quad \mbox{as}\quad \lambda \to \infty.
	\end{equation}
Recalling \eqref{eq:4.34}, \eqref{eq:4.35}, \eqref{eq:4.41} and 
the proof of \eqref{eq:2.66} in Lemma \ref{Lemma:2.9}, we see
	\begin{equation}\label{eq:4.45}
	\int_{\mathbb R^{N+1}_+}t^{1-2s}|\nabla(V_\lambda \zeta)|^2\,dX 
	\leq 
	\frac{p}{p-1}\int_{\mathbb R^{N+1}_+}t^{1-2s}|V_{\lambda}|^2|\nabla\zeta|^2\,dX
	\end{equation}
where $\zeta \in C^1_c( \overline{ \HS} ) $ satisfying 
	\[
		\zeta \equiv 1\ \ \text{in} \ B_{R}^+ \setminus B_{r}^+, \quad 
		\zeta \equiv 0 \quad \text{in} \ B_{r/2}^+ \cup ( \HS \setminus B_{2R}^+ )
		\quad \mbox{for}\quad \lambda^{-1} R_0 < r < R.
	\]
From \eqref{eq:4.44}, \eqref{eq:4.45} and the property of $\zeta$, we observe that 
for any $0 < r < R$, 
	\begin{equation}\label{eq:4.46}
		\lim_{ \lambda \to \infty} 
		\int_{ B_R^+ \setminus B_r^+ } t^{1-2s} \left| \nabla V_{\lambda} \right|^2 \, dX  = 0.
	\end{equation}
Furthermore, by \eqref{eq:2.64} with $V_\lambda$, \eqref{eq:4.44} and \eqref{eq:4.45}, for each $0 < r < R$, 
	\begin{equation}\label{eq:4.47}
		\lim_{ \lambda \to \infty}\int_{B_R \setminus B_r} |x|^\ell |V_{\lambda}(x,0)|^{p+1} \, dx = 0.
	\end{equation}

	Next, we shall prove $E(U;\lambda) \to 0$ as $\lambda \to \infty$. 
In view of \eqref{eq:4.34}, for each $\e \in (0,1)$, we have 
\begin{equation}\label{eq:4.48}
\begin{aligned}
\lambda^{\frac{2(2s+\ell)}{p-1}}D(U;\lambda)
= \ &D(V_\lambda ;1)
\\
= \ &\frac{1}{2}\int_{B^+_\varepsilon}t^{1-2s}|\nabla V_\lambda|^2\,dX
-\frac{\kappa_s}{p+1}\int_{B_\varepsilon}|x|^\ell|V_\lambda (x,0)|^{p+1}\,dx
\\
&
\quad
+\frac{1}{2}\int_{B^+_1\setminus B^+_\varepsilon}t^{1-2s}|\nabla V_\lambda|^2\,dX
-\frac{\kappa_s}{p+1}\int_{B_1\setminus B_\varepsilon}|x|^\ell|V_\lambda (x,0) |^{p+1}\,dx
\\
= \ & \e^{N-2s} D(V_\lambda; \e) 
+\frac{1}{2}\int_{B^+_1\setminus B^+_\varepsilon}t^{1-2s}|\nabla V_\lambda|^2\,dX
\\
&\quad 
-\frac{\kappa_s}{p+1}\int_{B_1\setminus B_\varepsilon}|x|^\ell|V_\lambda (x,0) |^{p+1}\,dx
\\
= \ &\varepsilon^{N-\frac{2}{p-1}(s(p+1)+\ell)}
\left[ (\lambda\varepsilon)^{\frac{2(2s+\ell)}{p-1}} D(U;\lambda\varepsilon)\right] 
\\
& 
\quad 
+\frac{1}{2}\int_{B^+_1\setminus B^+_\varepsilon}t^{1-2s}|\nabla V_\lambda|^2\,dX
-\frac{\kappa_s}{p+1}\int_{B_1\setminus B_\varepsilon}|x|^\ell|V_\lambda (x,0) |^{p+1}\,dx.
\end{aligned}
\end{equation}
By \eqref{eq:4.32} and \eqref{eq:4.46}--\eqref{eq:4.48}, we see that 
	\[
		\begin{aligned}
			&\limsup_{ \lambda \to \infty} 
			\left| \lambda^{\frac{2(2s+\ell)}{p-1}}D(U;\lambda) \right| 
			\\
			= \ &
			\limsup_{ \lambda \to \infty} 
			\left|  \varepsilon^{N-\frac{2}{p-1}(s(p+1)+\ell)}
			\left[ (\lambda\varepsilon)^{\frac{2(2s+\ell)}{p-1}} D(U;\lambda\varepsilon)\right] 
			\right.
			\\
			& \hspace{3cm} 
			\left. +\frac{1}{2}\int_{B^+_1\setminus B^+_\varepsilon}t^{1-2s}|\nabla V_\lambda|^2\,dX
			-\frac{\kappa_s}{p+1}\int_{B_1\setminus B_\varepsilon}|x|^\ell|V_\lambda (x,0) |^{p+1}\,dx
			 \right|
			 \\
			 \leq \ & C_0 \varepsilon^{N-\frac{2}{p-1}(s(p+1)+\ell)}
		\end{aligned}
	\]
for some $C_0>0$. Since $\e \in (0,1)$ is arbitrary and $N - \frac{2}{p-1} ( (p+1) s + \ell ) > 0$, we obtain 
\begin{equation}
\label{eq:4.49}
\lim_{\lambda\to\infty}\lambda^{\frac{2(2s+\ell)}{p-1}}D(U;\lambda) = 0.
\end{equation}
On the other hand, from \eqref{eq:4.44} it follows that 
	\[
		0 = \lim_{ \lambda \to \infty} \int_{B_2^+} t^{1-2s} |V_\lambda|^2 \,dX 
		= \lim_{\lambda\to\infty} \int_0^2 dr \int_{S_r^+} t^{1-2s} |V_\lambda|^2 \, dS. 
	\]
By choosing a subsequence $(\lambda_i)$, 
	\[
		\int_{S_r^+} t^{1-2s} |V_{\lambda_i}|^2 dS \to 0 \quad \text{a.a. $r \in (0,2)$}.
	\]
Therefore, there exists an $r_0 \in (0,2)$ such that \eqref{eq:4.34} gives 
	\[
		\lambda_i^{ \frac{2(2s+\ell)}{p-1} } H(U; r_0 \lambda_i)  
		= H(V_{\lambda_i} ; r_0 ) = r_0^{ -(N+1-2s) } \int_{ S_{r_0}^+} t^{1-2s} |V_{\lambda_i}|^2 \,dS \to 0.
	\]
With \eqref{eq:4.34}, \eqref{eq:4.49} and the monotonicity of $E(U;\lambda)$, we have 
	\begin{equation}\label{eq:4.50}
		\lim_{ \lambda \to \infty} E(U;\lambda) 
		= \lim_{ i \to \infty} E(U;\lambda_i r_0)= 0.
	\end{equation}

	We shall prove $U \equiv 0$. To this end, we shall prove 
$E(U;\lambda) \to 0$ as $\lambda \to 0$. 
Since $U \in C ( \overline{\HS} )$ holds by Lemma \ref{Lemma:2.3}, as $\lambda \to 0$, we have 
	\begin{equation}\label{eq:4.51}
		0 \leq \lambda^{\frac{2(2s+\ell)}{p-1}} H(U;\lambda) 
		= \lambda^{ \frac{2(2s+\ell)}{p-1} } 
		\int_{ S^+_1} \sigma_{N+1}^{1-2s} U(\lambda \sigma)^2 \, dS 
		\leq C \| U \|_{ L^\infty( B_1^+ ) }^2 \lambda^{ \frac{2(2s+\ell)}{p-1} } \to 0
	\end{equation}
and by \eqref{eq:1.2}, 
	\begin{equation}\label{eq:4.52}
		\lambda^{ \frac{2(2s+\ell)}{p-1} - (N-2s)} \int_{ B_\lambda} |x|^\ell |u|^{p+1} \, dx 
		\leq C \| u \|_{ L^\infty(B_1) }^{p+1} \lambda^{ \frac{2(2s+\ell)}{p-1} + 2s + \ell } \to 0.
	\end{equation}
By \eqref{eq:4.50} and the monotonicity of $E(U;\lambda)$, we have 
$E(U;\lambda) \leq 0$ for all $\lambda \in (0,\infty)$. 
In view of this fact with \eqref{eq:4.51} and \eqref{eq:4.52}, it follows that 
	\[
		\begin{aligned}
			&\limsup_{\lambda \to + 0} \frac{\lambda^{ \frac{2(2s+\ell)}{p-1} - (N-2s) } }{2} 
			\int_{B^+_\lambda} t^{1-2s} |\nabla U|^2 \, dX 
			\\
			= \ & \limsup_{\lambda \to + 0} 
			\left[ E(U;\lambda) + \frac{\lambda^{ \frac{2(2s+\ell)}{p-1} - (N-2s)  } \kappa_s }{p+1} 
			\int_{B_\lambda} |x|^\ell |u|^{p+1} \, dx - 
			\lambda^{ \frac{2(2s+\ell)}{p-1} } \frac{2s+\ell}{2(p-1)} H(U;\lambda) 
			 \right] 
			 \\
			 \leq \ & 0,
		\end{aligned}
	\]
which yields  
	\begin{equation}\label{eq:4.53}
		\lim_{\lambda \to +0} 
		\lambda^{ \frac{2(2s+\ell)}{p-1} - (N-2s)} \int_{ B_\lambda^+} t^{1-2s} |\nabla U|^2 \, dX 
		= 0.
	\end{equation}
Now, \eqref{eq:4.51}--\eqref{eq:4.53} yields $E(U;\lambda) \to 0$ $(\lambda \to 0)$. 
Therefore, $E(U; \lambda) \equiv 0$ thanks to \eqref{eq:4.50} and the monotonicity of $E$. 
Applying the argument similar to \eqref{eq:4.42} and \eqref{eq:4.43},
we see that $U = r^{ - \frac{2s+\ell}{p-1} } U( r^{-1} X ) $.  In addition, 
since $U\in C ( \overline{\HS}) $, $\psi(\omega,0) = U(\omega,0) \in L^\infty(\partial S^+_1)$ and 
$u$ (respectively $U$) is stable outside $B_{R_0}$ (respectively $B_{R_0}^+$), 
\eqref{eq:4.7} is satisfied and 
it follows from Lemma~\ref{Lemma:4.3} that $U\equiv 0$. 
This completes the proof. 
\end{proof}
\noindent
\textbf{Acknowledgment.} 
The authors would like to express their sincere gratitude to the anonymous referee 
for his/her careful reading and useful comments. 
They also would like to thank Alexander Quaas for letting them know \cite{BQ-20}. 
The first author (S.H.) was supported by JSPS KAKENHI Grant Numbers JP 
20J01191.
The second author (N.I.) was supported by JSPS KAKENHI Grant Numbers JP 
17H02851, 19H01797 and 19K03590. 
The third author (T.K.) was supported by JSPS KAKENHI Grant Numbers JP 
19H05599 and 16K17629. 

%%%%%%%%%%%%%%%%%%%%%%%%%%%%%%%%%%%%%%
%%%%%%%%%%%%    references    %%%%%%%%%%%%%%%%%%
%%%%%%%%%%%%%%%%%%%%%%%%%%%%%%%%%%%%%%

\end{document}